
\documentclass{amsart}
\usepackage{eurosym}
\usepackage{amsthm,amssymb,amsfonts,amsmath,mathrsfs}
\usepackage{dsfont}
\usepackage{graphicx}
\usepackage{enumerate}
\usepackage{hyperref}
\usepackage[nodvipsnames]{xcolor}
\usepackage{soul}
\usepackage{pgfplots}

\setcounter{MaxMatrixCols}{10}

\newcommand{\notaremm}[1]{\textcolor{blue}{\ifmmode\text{\sout{\ensuremath{#1}}}
\else\sout{#1}\fi }}

\newtheorem{theorem}{Theorem}

\newtheorem{corollary}[theorem]{Corollary}

\newtheorem{definition}[theorem]{Definition}

\newtheorem{lemma}[theorem]{Lemma}

\newtheorem{proposition}[theorem]{Proposition}
\newtheorem{remark}[theorem]{Remark}

\renewcommand{\S}{\mathbb{S}}

\DeclareMathOperator{\Var}{Var}
\subjclass[2010]{Primary 37C40 ; Secondary 37H30, 37C30, 37E10}

\keywords{Zero noise limit, random dynamical system, linear response, statistical stability}
\input{tcilatex}

\begin{document}
\title{Quadratic response and speed of convergence of invariant measures in
the zero-noise limit.}
\author{Stefano Galatolo}
\address{Dipartimento di Matematica, Universit\`a di Pisa, Largo Bruno
Pontecorvo 5, 56127 Pisa, Italy}
\email{stefano.galatolo@unipi.it}
\urladdr{http://pagine.dm.unipi.it/~a080288/}
\author{Hugo Marsan}
\address{---}
\email{hugo.marsan@ens-paris-saclay.fr}
\date{\today }

\begin{abstract}
We study the stochastic stability in the zero-noise limit from a
quantitative point of view.

We consider smooth expanding maps of the circle, perturbed by additive
noise. We show that in this case the zero-noise limit has a quadratic speed
of convergence, as conjectured by Lin, in 2005, after numerical experiments
(see \cite{Lin}). This is obtained by providing an explicit formula for the
first and second term in the Taylor's expansion of the response of the
stationary measure to the small noise perturbation. These terms depend on
important features of the dynamics and of the noise which is perturbing it,
as its average and variance.

We also consider the zero-noise limit from a quantitative point of view for
piecewise expanding maps showing estimates for the speed of convergence in
this case.
\end{abstract}

\maketitle

\section{Introduction}

Deterministic dynamical systems are often used as models of physical and
natural phenomena despite the ubiquitous presence in nature of small random
perturbations or fluctuations. It is natural to study the robustness of the
deterministic model to such random perturbations and which of the aspects of
the deterministic dynamics are stable under small random perturbations. In
this paper we consider the many important aspects of the statistical
behavior of the system which are encoded in its invariant measures. We study
hence quantitatively how these measures change when the system is perturbed
by the adding of a small quantity of noise, in the so called zero-noise
limit. More precisely, we study this limit and its speed of convergence from
a quantitative point of view, also considering first and second order terms
in this convergence. We will see that these terms depend on important
features of the dynamics and of the noise which is perturbing it, as its
average and variance.

Let $S_{0}:=(X,T)$ be a discrete time deterministic dynamical system where $%
X\ $is a metric space and$\ \ T:X\rightarrow X$ \ is a Borel measurable map.
It is well known that $(X,T)$ can have several invariant measures, let us
consider one of these measures and denote it by\ $\mu _{0}$.

Suppose now we perturb the system at each iteration by the adding of a small
quantity of noise whose amplitude is expressed by a certain parameter $%
\delta $ $\in \lbrack 0,\overline{\delta })$ obtaining a family of random
systems $\{S_{\delta }\}_{\delta \in \lbrack 0,\overline{\delta })}$ (these
systems will formally be defined as suitable random dynamical systems, a
precise definition will be given in Section \ref{QR2}). Suppose $\{\mu
_{\delta }\}_{\delta \in \lbrack 0,\overline{\delta })}$ are stationary
measures for $S_{\delta }.$ It is natural to investigate under which
assumptions one may have%
\begin{equation*}
\lim_{\delta \rightarrow 0}\mu _{\delta }= \mu _{0}.
\end{equation*}%
In this case the system and the measure $\mu _{0}$ are said to be \emph{%
statistically stable} under small noise perturbations (or in the zero-noise
limit). An invariant measure of a deterministic model which is stable under
the small random perturbations which are present in nature is a measure that
can be observed in the real phenomenon behind the model. For this reason
this zero-noise limit was proposed by A. N. Kolmogorov as a tool to select
the \emph{physically meaningful }measures among the a priori many invariant
measures of a deterministic system (see e.g. \cite{ER}, \cite{Y}).

The statistical stability for the zero-noise limit (also called stochastic
stability) was proved for several classes of systems, starting from
uniformly hyperbolic ones to many interesting cases of non-uniform
hyperbolic behavior (\cite{Ki},\cite{Y2}, \cite{BK1}, \cite{BK2}, \cite{BY}, 
\cite{met}, \cite{Sh}, \cite{AV}, \cite{LSV}, \cite{AT}, \cite{AA}, \cite{A}%
, \cite{AK}, \cite{S}, \cite{BV}).

The mere existence of the zero-noise limit gives a qualitative information
on the behavior of the system under perturbation. In practice it can be
useful to have quantitative information on this convergence, both on the
speed of the convergence and on the "direction" of change of the invariant
measure after the perturbation. In \cite{Lin} several numerical experiments
have been done to estimate the speed of convergence in the limit,
conjecturing a quadratic speed in the case of smooth expanding maps and
linear speed for the piecewise expanding and hyperbolic case. Other
exponents have been conjectured in cases of weakly chaotic, non-uniformly
hyperbolic systems.

In this paper we will consider these kinds of questions, investigating both
quantitative estimates for the speed of the convergence and the direction of
change of the invariant measure of the system under perturbation. This is
strongly related to the linear response theory, although in this case we
will not be only interested in the linear term in the response of the system
to the perturbation, but also in the higher order terms, and in particular
to the quadratic one.

The \emph{Linear Response} means to quantify the response of the system when
submitted to a certain infinitesimal perturbation as a derivative. For
example, if one is interested in the linear response of the stationary
measure of the system, we will consider the derivative of the invariant
measure of interest with respect to the perturbation.

More precisely, \ let $\{S_{\delta }\}_{\delta \in \lbrack 0,\overline{%
\delta })}$ as above be the family of systems arising by some small
perturbation of the initial system $S_{0}$ with stationary measures $\{\mu
_{\delta }\}_{\delta \in \lbrack 0,\overline{\delta })}$. The linear
response of the invariant measure $\mu _{0}$ of $S_{0}$ under the given
perturbation is defined by the limit 
\begin{equation}
\dot{\mu}:=\lim_{\delta \rightarrow 0}\frac{\mu _{\delta }-\mu _{0}}{\delta }
\label{LRidea}
\end{equation}%
where the meaning of this convergence can vary from system to system. \ In
some systems and for a given perturbation, one may get $L^{1}$-convergence
for this limit; in other systems or for other perturbations one may get
convergence in weaker or stronger topologies. The linear response to the
perturbation hence represents the first order term of the response of a
system to the perturbation and in this case, a linear response formula can
be written: 
\begin{equation}
\mu _{\delta }=\mu _{0}+\dot{\mu}\delta +o(\delta )  \label{lin}
\end{equation}%
which holds in some weaker or stronger sense. \ We remark that given an
observable function $c:X\rightarrow \mathbb{R}$, if the convergence in %
\eqref{LRidea} is strong enough with respect to the regularity of $c$, we get

\begin{equation}
\lim_{t\rightarrow 0}\frac{\int \ c\ d\mu _{t}-\int \ c\ d\mu _{0}}{t}=\int
\ c\ d\dot{\mu}  \label{LRidea2}
\end{equation}%
showing how the linear response controls the behavior of observable
averages. For instance the convergence in \eqref{LRidea2} hold when $c\in
L^{\infty }$ and the convergence of the linear response is in $L^{1}$.

Once the first order (the linear part) of the response of a system to a
perturbation is understood, it is natural to study further orders. The
second order of the response may then be related to the second derivative
and to other natural questions, as convexity aspects of the response of the
system under perturbation, or the stability of the first order response.
Hence, if the Linear Response $\dot{\mu}$ represents the first order term of
the response (see \eqref{lin}), the Quadratic Response $\ddot{\mu}$ will
represent the second order term of this response, analogous to the second
derivative in usual Taylor's expansion:%
\begin{equation}
\mu _{\delta }=\mu _{0}+\dot{\mu}\delta +\frac{1}{2}\ddot{\mu}\delta
+o(\delta ^{2}).  \label{Quad}
\end{equation}

We refer to \cite{BB} for \ a recent survey on linear response for
deterministic systems and perturbations and to the introduction of \cite{GS}
for a very recent survey in the case of response for random systems and
higher order terms in the response of a system to deterministic or random
perturbations. Focusing on zero-noise limits, we point out the paper \cite%
{GL}, where among other results, linear and high order response are proven
for deterministic perturbations and zero-noise limits of uniformly
hyperbolic systems (see also \cite{Li2} for some earlier examples of linear
response in the zero-noise limit for expanding maps and \cite{BGNN} for
rigorous numerical methods for its approximation including an example of
zero-noise limit).

In the paper \cite{GS}, a relatively simple and quite general approach to the
first and second order terms in the response of a system to  perturbations is proposed and applied to deterministic perturbations of deterministic systems and perturbations of random systems.
In Subsection \ref{secgenres} we recall the main general results of \cite{GS}%
. We then apply it to the zero-noise limit, providing precise quantitative
information on the convergence of the zero-noise limit and proving some of
the conjectures suggested by the numerical experiments and the heuristic
exposed in \cite{Lin}, in particular considering zero-noise limits of
expanding and piecewise expanding maps.

In the literature, the general approach to this problem is often based on
considering the family of transfer operators $\{L_{\delta }\}_{\delta \in
\lbrack 0,\overline{\delta })}$ associated to the dynamical system and its
perturbations, remarking that invariant and stationary measures are fixed
points of this family of operators. Quantitative perturbative statements
about these operators and its spectral picture will hence give information
on the perturbation of invariant measures. In this paper we use these tools
to study the zero-noise limit from a quantitative point of view in two main
cases: \emph{smooth expanding maps} and \emph{piecewise expanding maps} of
the circle. In the following two subsections we enter in more details about
our main results in these two cases.

\noindent \textbf{Smooth expanding maps, response and zero-noise limit. }We
consider a smooth expanding map $T\in C^{8}(\mathbb{S}^{1}\rightarrow 
\mathbb{S}^{1})$, with its associated transfer operator $L_{T}:SM(\mathbb{S}%
^{1})\rightarrow SM(\mathbb{S}^{1}),$ \ where $SM(\mathbb{S}^{1})$ is the
space of finite Borel measures with sign on $\mathbb{S}^{1},$ defined by%
\begin{equation*}
(L_{T}(\mu ))(A)=\mu (T^{-1}(A))
\end{equation*}%
for each signed measure $\mu \in SM(\mathbb{S}^{1}).$ $L_{T}$ is also called
the transfer operator associated to $T$ or pushforward map associated to $T$%
. We consider an i.i.d. random perturbation distributed according to a
kernel $\rho \in BV([-1,1])$. $\forall \delta \in \lbrack 0,\overline{\delta 
})$ we denote by $\rho _{\delta }$ the rescaling of $\rho $ with amplitude $%
\delta $ by 
\begin{equation*}
\rho _{\delta }(x)=\frac{1}{\delta }\rho \left( \frac{x}{\delta }\right) .
\end{equation*}%
The transfer operator associated to the randomly perturbed map is then
defined as 
\begin{equation*}
L_{\delta }=\rho _{\delta }\ast L_{T}
\end{equation*}%
where $\ast $ stands for the ordinary convolution operator on $\mathbb{S}%
^{1}.$ Note that we can extend this definition to $\delta =0$ with $\rho
_{0}=\delta _{0}$ the Dirac mass. It can be proved that (see Section \ref%
{secQR}) each operator $L_{\delta }$ has a unique fixed point $h_{\delta }$
in the Sobolev space $W^{7,1}(\mathbb{S}^{1})$ and hence $h_{\delta }$ is the
stationary measure of the perturbed system.

The idea is to prove that this family of operators admits a linear, and even
quadratic response when $\delta$ tends to $0$. In particular, we prove and
extend a result conjectured in \cite{Lin}, in which the author predicted a
convergence of order $\delta ^{2}$. We will precise the coefficients of the
order two Taylor's expansion of this zero-noise limit proving the following
theorem:

\begin{theorem}[Quadratic response in the zero-noise limit for a smooth
expanding map]
\label{thm:quadrespsmooth} The map $\delta \mapsto h_{\delta }$ has an order
two Taylor's expansion at $\delta =0$, with%
\begin{equation}
\left\Vert \frac{h_{\delta }-h_{0}}{\delta ^{2}}-\frac{\sigma^2(\rho )}{2}%
(Id-L_{T})^{-1}h_{0}^{\prime \prime }\right\Vert _{W^{{1,1}}}\underset{%
\delta \rightarrow 0}{\longrightarrow }0.
\end{equation}%
with $\sigma^2(\rho )=\int_{-1}^{1}x^{2}\rho (x)dx$.
\end{theorem}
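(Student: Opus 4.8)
The plan is to verify the hypotheses of the abstract quadratic response statement of \cite{GS} recalled in Subsection~\ref{secgenres}, and apply it to the family $L_\delta = \rho_\delta \ast L_T$. Concretely, one works with a pair of spaces: a ``weak'' one (essentially $W^{1,1}(\mathbb{S}^1)$, the space of the conclusion) and a ``strong'' one $W^{k,1}(\mathbb{S}^1)$ with $k$ large enough, the regularity budget being supplied by $T\in C^8(\mathbb{S}^1)$ and by the fact, recalled in Section~\ref{secQR}, that $h_\delta \in W^{7,1}(\mathbb{S}^1)$ with $\sup_\delta \|h_\delta\|_{W^{7,1}}<\infty$. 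The general lemma then reduces the problem to: (i) uniform-in-$\delta$ Lasota--Yorke bounds for $L_\delta$ together with invertibility of $Id-L_\delta$ on the closed subspace of zero-average elements, with uniformly bounded inverse; and (ii) a second order Taylor expansion at $\delta=0$ of $\delta\mapsto L_\delta h_0$, with coefficients in the zero-average subspace. Item (i) is spectral stability \`a la Keller--Liverani and, convolution being a contraction on every $W^{m,1}$ (indeed regularizing), it follows from the estimates already in place for $L_T$ and $L_\delta$ in Section~\ref{secQR}; the actual content of the theorem is item (ii).

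For item (ii) I would use $L_T h_0 = h_0$, so that $L_\delta h_0 = \rho_\delta \ast L_T h_0 = \rho_\delta \ast h_0$, and expand the mollification. For $x\in\mathbb{S}^1$,
\begin{equation*}
(L_\delta h_0 - h_0)(x) = (\rho_\delta \ast h_0 - h_0)(x) = \int_{-1}^{1}\rho(z)\bigl(h_0(x-\delta z)-h_0(x)\bigr)\,dz ;
\end{equation*}
Taylor expanding $h_0(x-\delta z)$ to second order with remainder and integrating against $\rho$, the first order term is $-\delta\,h_0'(x)\int_{-1}^{1}z\rho(z)\,dz$, which vanishes because the noise kernel is centered, while the second order term is $\tfrac{\delta^2}{2}h_0''(x)\int_{-1}^{1}z^2\rho(z)\,dz=\tfrac{\sigma^2(\rho)}{2}\delta^2 h_0''(x)$. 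Since $\rho$ is supported in $[-1,1]$ and $h_0\in W^{7,1}(\mathbb{S}^1)\subset C^6(\mathbb{S}^1)$, the remainder is $O(\delta^3)$ in $W^{1,1}(\mathbb{S}^1)$, hence
\begin{equation*}
L_\delta h_0 - h_0 = \frac{\sigma^2(\rho)}{2}\,\delta^2\, h_0'' + o(\delta^2)\qquad\text{in }W^{1,1}(\mathbb{S}^1).
\end{equation*}
One checks that $h_0''$ has zero average and that $(Id-L_T)^{-1}h_0''\in W^{1,1}(\mathbb{S}^1)$, because $L_T$ has a spectral gap on $W^{1,1}(\mathbb{S}^1)$ for a smooth expanding map, so the limit in the statement is a well-defined element of $W^{1,1}(\mathbb{S}^1)$.

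Feeding (i) and (ii) into the abstract lemma yields the expansion of $\delta\mapsto h_\delta$: the first order term is absent (no linear response for centered noise), and the second order coefficient of $h_\delta$ is $(Id-L_T)^{-1}$ applied to the second order coefficient $\tfrac{\sigma^2(\rho)}{2}h_0''$ of $L_\delta h_0 - h_0$, that is,
\begin{equation*}
h_\delta - h_0 = \frac{\sigma^2(\rho)}{2}\,\delta^2\,(Id-L_T)^{-1}h_0'' + o(\delta^2)\qquad\text{in }W^{1,1}(\mathbb{S}^1),
\end{equation*}
which is exactly the claimed statement. Under the hood this is the identity $(Id-L_\delta)(h_\delta-h_0)=L_\delta h_0 - h_0$, divided by $\delta^2$, combined with the uniform boundedness of $(Id-L_\delta)^{-1}$ on the zero-average subspace and its convergence to $(Id-L_T)^{-1}$ there as $\delta\to 0$ — precisely what (i) guarantees.

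The main obstacle is the interplay between (i) and (ii): the operators $L_\delta-L_T$ and ``multiplication by a second derivative'' move between different Sobolev spaces, so one must organize the scale $W^{k,1}(\mathbb{S}^1)$ so that, simultaneously, the remainder in the mollification estimate is genuinely $o(\delta^2)$ in the weak ($W^{1,1}$) norm, the fixed points $h_\delta$ stay uniformly bounded in a strong enough norm to absorb that loss of derivatives, and $Id-L_\delta$ remains uniformly invertible between the chosen spaces. The $C^8$ hypothesis on $T$ and the $W^{7,1}$ regularity of the stationary densities are exactly what make this bookkeeping work; no new dynamical estimate beyond those of Section~\ref{secQR} is needed, but reconciling the norms is where the care lies.
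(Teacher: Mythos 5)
Your proposal is correct and follows essentially the same route as the paper: it invokes the abstract quadratic response theorem of \cite{GS} on a scale of Sobolev spaces (the paper uses $W^{7,1}\subset W^{5,1}\subset W^{3,1}\subset W^{1,1}$), verifies the uniform Lasota--Yorke bounds and the resolvent/spectral-gap hypotheses, and identifies the derivative operators by Taylor expanding $\rho_\delta\ast h_0$, with the first-order term killed by the centering of $\rho$ and the second-order coefficient equal to $\tfrac{\sigma^2(\rho)}{2}h_0''$. The only cosmetic difference is your ``under the hood'' identity $(Id-L_\delta)(h_\delta-h_0)=L_\delta h_0-h_0$, which would require uniform invertibility of $Id-L_\delta$, whereas the abstract theorem as used in the paper only needs the resolvent of the unperturbed operator $L_0$; since you defer to that abstract lemma anyway, this does not affect correctness.
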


Next Section \ref{secQR} is essentially devoted to the proof of this result.
We prove the theorem by the application of some general linear and quadratic
response statements we recall in subsection \ref{secgenres}. In subsection %
\ref{verif} we verify the several assumptions needed to apply those
theorems, completing the proof at the end of Section \ref{secQR}.

\noindent \textbf{Piecewise expanding maps, quantitative stability and
zero-noise limit. }We have seen that for smooth expanding maps there is a
quadratic speed of convergence in the zero-noise limit. This depend both on
the smoothness and on the strong chaoticity of the system. When having less
regularity, the speed of convergence changes. In the second part of the
paper we consider indeed piecewise expanding maps, allowing discontinuities.
In this case we have systems still having strong chaoticity, and exponential
decay of correlations, but the speed of convergence in the zero limit is of
order 1. We prove in fact the following

\begin{proposition}
\label{prop:linresppiecewise} Let $T:\mathbb{S}^{1}\rightarrow \mathbb{S}%
^{1} $ be a piecewise expanding map having no periodic turning points (see
Section \ref{sec:pwexp} for the precise definitions). Let us suppose we
perturb the associated dynamical system with noise of amplitude $\delta $ as
above, let $L_{\delta }$ be the associated transfer operators and let $%
h_{\delta }$ be a family of invariant measures for $L_{\delta }$ then $%
h_{\delta }\in Lip[0,1]$ and there is $C\geq 0$ such that for each $\delta
\in \lbrack 0,\overline{\delta })$%
\begin{equation*}
||h_{0}-h_{\delta }||_{L^{1}}\leq C\delta \log \delta .
\end{equation*}%
Furthermore, 
there are examples of piecewise expanding maps (with periodic turning
points) for which there is a constant $C^{\prime }$ such that for each $%
\delta \in \lbrack 0,\overline{\delta })$, 
\begin{equation*}
||h_{0}-h_{\delta }||_{L^{1}}\geq C^{\prime }\delta .
\end{equation*}
\end{proposition}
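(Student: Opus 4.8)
The plan is to treat the two assertions separately. For the upper bound, the first step is to establish, for the family of transfer operators $L_{\delta}=\rho_{\delta}\ast L_{T}$ acting on $BV(\mathbb{S}^1)$, a Lasota--Yorke inequality with constants \emph{uniform in} $\delta$: there are $A,B>0$ and $\lambda_{1}\in(0,1)$ with $\Var(L_{\delta}^{n}f)\le A\lambda_{1}^{n}\Var(f)+B\|f\|_{L^{1}}$ for all $n$, all $\delta\in[0,\overline{\delta})$ and all $f\in BV$. This follows from the $n$-step inequality for $T$ (this is where the piecewise expanding structure, and — for the uniformity of the constants and the spectral gap below — the no periodic turning points hypothesis enter, in the spirit of \cite{BK1,BK2}) together with the elementary fact that convolution by the probability kernel $\rho_{\delta}$ increases neither the total variation, nor the $L^{1}$ norm, nor the sup norm of a function. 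Since each $L_{\delta}$ is moreover a Markov operator on $L^{1}$, standard Ionescu-Tulcea--Marinescu and Keller--Liverani arguments give that $h_{\delta}$ is unique, $\sup_{\delta}\|h_{\delta}\|_{BV}<\infty$, and a \emph{uniform spectral gap} holds, i.e. $\|L_{\delta}^{n}g\|_{BV}\le C\gamma^{n}\|g\|_{BV}$ for all $g$ in the zero-average subspace $V=\{g:\int g=0\}$, with $C>0$, $\gamma\in(0,1)$ independent of $\delta$. Finally, since $h_{\delta}=\rho_{\delta}\ast(L_{T}h_{\delta})$ and $\rho\in BV([-1,1])\subset L^{\infty}$, one gets $\|h_{\delta}'\|_{\infty}=\|\rho_{\delta}'\ast L_{T}h_{\delta}\|_{\infty}\le \tfrac{1}{\delta}\Var(\rho)\,\|L_{T}h_{\delta}\|_{\infty}\le M/\delta$, with $M$ independent of $\delta$ (using $\|L_{T}h_{\delta}\|_{\infty}\le\|L_{T}h_{\delta}\|_{L^1}+\Var(L_{T}h_{\delta})$ and the one-step inequality); in particular $h_{\delta}\in\mathrm{Lip}$, with Lipschitz constant of order $\delta^{-1}$.

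Next, from $L_{\delta}h_{\delta}=h_{\delta}$ and $L_{T}h_{0}=h_{0}$ we get the identity $(Id-L_{\delta})(h_{\delta}-h_{0})=L_{\delta}h_{0}-h_{0}=\rho_{\delta}\ast h_{0}-h_{0}=:g_{\delta}$; since $h_{\delta}-h_{0}\in V$ and $g_{\delta}\in V$, we may invert on $V$ and write $h_{\delta}-h_{0}=\sum_{n\ge0}L_{\delta}^{n}g_{\delta}$, the series converging in $BV$ by the uniform spectral gap. Two estimates on $g_{\delta}$ feed the argument: $\|g_{\delta}\|_{L^{1}}\le\Var(h_{0})\int|y|\rho_{\delta}(y)\,dy\le\Var(h_{0})\,\delta$ (via $\|f(\cdot-y)-f\|_{L^{1}}\le|y|\Var(f)$), and $\|g_{\delta}\|_{BV}\le 2\|h_{0}\|_{BV}$. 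Now split the series at an integer $N$: for $n\le N$ use that $L_{\delta}$ is an $L^{1}$-contraction, $\sum_{n\le N}\|L_{\delta}^{n}g_{\delta}\|_{L^{1}}\le(N+1)\|g_{\delta}\|_{L^{1}}\le C(N+1)\delta$; for $n>N$ use the uniform spectral gap, $\sum_{n>N}\|L_{\delta}^{n}g_{\delta}\|_{L^{1}}\le\sum_{n>N}C\gamma^{n}\|g_{\delta}\|_{BV}\le C'\gamma^{N}$. Choosing $N\sim|\log\delta|/|\log\gamma|$, so that $\gamma^{N}\lesssim\delta$, balances the two terms and yields $\|h_{\delta}-h_{0}\|_{L^{1}}\le C\,\delta\,|\log\delta|$, the bound of the statement.

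For the second assertion it suffices to exhibit one piecewise expanding map with a periodic turning point for which $\|h_{0}-h_{\delta}\|_{L^{1}}\ge C'\delta$. Take, for instance, a two-branch piecewise-linear circle map with a \emph{fixed} turning point $c$ and two distinct slopes (e.g. $T(x)=3x$ on $[-\tfrac12,0]$, $T(x)=-5x$ on $[0,\tfrac12]$ mod $1$, for which $0$ and $\tfrac12$ are fixed turning points). Near the turning value $T(c)=c$ the points on one side of $c$ acquire two preimages near $c$ which disappear on the other side, so the jump data of $h_{0}$ at $c$ solves a finite linear system whose solution has jump $j_{0}:=|h_{0}(c^{-})-h_{0}(c^{+})|>0$ (it is exactly the periodicity of $c$ that makes this jump self-sustaining and nonzero; one may also read $j_{0}$ off the explicit piecewise-constant density). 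On the other hand, by the Lipschitz estimate of the first paragraph — which for this example needs only the one-step Lasota--Yorke inequality for $T$, and $\inf|T'|>2$ makes its constant strictly contracting — $h_{\delta}$ is continuous with $\|h_{\delta}'\|_{\infty}\le M/\delta$, $M$ independent of $\delta$. Hence $h_{\delta}$ cannot reproduce the jump: on the side of $c$ where $|h_{\delta}(c)-h_{0}|\ge j_{0}/2$, the bound $|h_{\delta}(x)-h_{\delta}(c)|\le(M/\delta)|x-c|$ forces $|h_{\delta}(x)-h_{0}(x)|\ge j_{0}/3$ for $x$ in an interval of length $\gtrsim\delta j_{0}/M$ adjacent to $c$ (for $\delta$ small, so that this interval stays inside the plateau of $h_{0}$), and integrating gives $\|h_{0}-h_{\delta}\|_{L^{1}}\ge C'\delta$.

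The main obstacle is the first step: establishing the Lasota--Yorke inequality and the spectral gap \emph{uniformly in} $\delta$ for the convolution family $L_{\delta}=\rho_{\delta}\ast L_{T}$, and pinning down how the no periodic turning points hypothesis is used — the perturbation $L_{\delta}-L_{0}$ is not small in operator norm, so one must combine the smoothing effect of the convolution with an $n$-step estimate for $T$ and argue (à la Keller--Liverani, \cite{BK1}) that no eigenvalue of $L_{\delta}$ escapes towards the unit circle; without the hypothesis this can fail, consistently with the sharpness exhibited by the example in the second assertion. Everything after that is bookkeeping: the Neumann-series split and the optimization in $N$ for the upper bound, and the elementary continuity argument for the lower bound.
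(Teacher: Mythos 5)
Your upper bound follows the same strategy as the paper, differing only in the mechanism of the final estimate. The paper packages the argument as a ``uniform family of operators'' result (Proposition~\ref{dlogd}, imported from \cite{GLu}): it needs only (i) a uniform $BV$ bound on $h_{\delta}$, obtained exactly as you propose via the Blank--Keller Lasota--Yorke inequality for maps with no periodic turning points plus the fact that convolution with $\rho_{\delta}$ does not increase variation, (ii) the $O(\delta)$ bound on $\|(L_{\delta}-L_{0})h_{\delta}\|_{L^{1}}$, and (iii) convergence to equilibrium \emph{for $L_{0}$ only}, together with uniform power-boundedness of $L_{\delta}$ on $L^{1}$; the $\delta\log\delta$ rate then comes from the same split-at-$N$ optimization you perform. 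Your Neumann-series version $h_{\delta}-h_{0}=\sum_{n}L_{\delta}^{n}(\rho_{\delta}\ast h_{0}-h_{0})$ is clean and correct, but it requires a spectral gap for \emph{every} $L_{\delta}$, uniformly in $\delta$, which does not follow from the uniform Lasota--Yorke inequality alone: you must additionally invoke Keller--Liverani spectral stability (using the $O(\delta)$ smallness of $L_{\delta}-L_{0}$ in the $BV\to L^{1}$ norm) to rule out peripheral spectrum escaping to the unit circle. You acknowledge this, so it is a heavier but viable route; the paper's telescoping avoids it entirely. Your Lipschitz estimate for $h_{\delta}$ (constant $O(1/\delta)$, via $\Var(\rho_{\delta})=\Var(\rho)/\delta$ and the uniform sup bound on $L_{T}h_{\delta}$) is exactly the paper's Propositions~\ref{sop1} and~\ref{proplip2}; just note that since $\rho$ is only $BV$ you should phrase it as a difference-quotient bound rather than via $\rho_{\delta}'$.

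The lower bound has one genuine gap: you never verify that your example's invariant density actually has a jump. Your map ($3x$ and $-5x$ glued at a fixed turning point) is not obviously Markov, so there is no ``explicit piecewise-constant density'' to read $j_{0}$ off of, and the assertion that the jump is ``self-sustaining and nonzero'' is a heuristic, not a proof; a priori the linear relation satisfied by the jump at a fixed turning point could force it to vanish. Without $j_{0}>0$ the whole lower bound collapses. The paper avoids this by working with the explicit map $T$ of~(\ref{mapT}), whose invariant density~(\ref{densH}) is computed exactly and has jump $2/3$ at $x=1/2$. Once a verified discontinuity is in hand, your argument (continuity of $h_{\delta}$ plus the $M/\delta$ Lipschitz bound forces an error of size $\gtrsim j_{0}$ on an interval of length $\gtrsim\delta j_{0}/M$) is correct and is essentially a cruder form of the paper's sharp lemma that any $a$-Lipschitz function is at $L^{1}$-distance at least $\frac{1}{9a}$ from $h_{0}$. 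One further caveat you should be aware of: for the lower-bound example the turning points are periodic, so the Blank--Keller uniform Lasota--Yorke inequality is not available, and the uniform $BV$ bound on $h_{\delta}$ (needed for the Lipschitz estimate) must be obtained by another route -- your observation that a one-step inequality with $\inf|T'|>2$ suffices is a legitimate fix, and indeed handles this point more cleanly than the paper does for its own slope-$(1,2)$ example.
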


\section{Quadratic response and the zero-noise limit of expanding maps\label%
{secQR}}

In this section we consider the zero-noise limit of expanding maps on the
circle. We get precise estimates on the speed of convergence of this limit,
proving Theorem \ref{thm:quadrespsmooth}.

In this section we will consider maps $T:\mathbb{S}^{1}\rightarrow \mathbb{S}%
^{1}$ satisfying the following assumptions

\begin{enumerate}
\item $T\in C^{8}(\mathbb{S}^{1}\mathbb{)},$

\item $|T^{\prime }(x)|\geq \alpha ^{-1}>1$ $\forall x\in \mathbb{S}^{1}$.
\end{enumerate}

To $T$ is associated a linear map $L_{T}:SM(\mathbb{S}^{1})\rightarrow SM(%
\mathbb{S}^{1}),$ \ where $SM(\mathbb{S}^{1})$ is the space of Borel
measures with sign on $\mathbb{S}^{1},$ defined by%
\begin{equation*}
(L_{T}(\mu ))(A)=\mu (T^{-1}(A))
\end{equation*}%
for each signed measure $\mu \in SM(\mathbb{S}^{1}).$ \ $L_{T}$ is also
called the transfer operator associated to $T$ or pushforward map associated
to $T$.

We consider a perturbation of this transfer operator by adding to the
deterministic dynamics generated by $T$ a random independent and identically
distributed perturbation: the noise. In other words we consider a random
dynamical system, corresponding to the stochastic process $(X_{n})_{n\in 
\mathbb{N}}$ defined by 
\begin{equation}
X_{n+1}=T(X_{n})+\Omega _{n}\mod 1  \label{eq:syswaddnoise}
\end{equation}%
where $(\Omega _{n})_{n\in \mathbb{N}}$ are i.i.d random variables
distributed according to a probability density $\rho _{\delta }$ (the noise
kernel) where $\delta $ represent the "size" of the perturbation. We suppose
that $\rho _{\delta }$ is obtained by rescaling a certain distribution $\rho 
$ $\in BV([-1,1]),$ \ as follows 
\begin{equation}
\rho _{\delta }(x)=\frac{1}{\delta }\rho \left( \frac{x}{\delta }\right)
\label{eq:kerdef}
\end{equation}%
for each $\ \delta \in (0,1].$ \ The (annealed) transfer operator associated
to the perturbed random system is then defined by%
\begin{equation}
L_{\delta }=\rho _{\delta }\ast L_{T}  \label{opdef}
\end{equation}%
where $\ast $ \ is the convolution operator on $\mathbb{S}^{1}$ (see e.g.%
\cite{Viana}, Section 5 or \cite{GMN}, Section 8 for more details on the
definition of the annealed transfer operator).

Remark that for each $f\in L^{1}(\mathbb{S}^{1})$, one has for almost every $%
x\in \mathbb{S}^{1}$: 
\begin{equation}
((\rho _{\delta }-\delta _{0})\ast f)(x)=\frac{1}{\delta }\int_{-\delta
}^{\delta }\rho \left( \frac{y}{\delta }\right)
f(x-y)dy-f(x)=\int_{-1}^{1}\rho (z)f(x-\delta z)dz-f(x).  \label{eq:kernel}
\end{equation}%
Also, one can remark that $\rho $ being a zero-average probability kernel,
it verifies 
\begin{equation}
\int_{-1}^{1}\rho (z)dz=1\qquad \int_{-1}^{1}\rho (z)zdz=0\qquad
\int_{-1}^{1}\rho (z)z^{2}dz:=\sigma ^{2}(\rho ).
\end{equation}

To keep the notation compact we will denote $L_{0}:=L_{T}$. We remark that
the invariant measures of the map $T$ are fixed points of $L_{0}$ and the
stationary measures of the random system constructed by the adding of the
noise are fixed points of $L_{\delta }$. We are interested in the properties
of these measures and how they vary as $\delta $ goes to $0$. Their
characterization as fixed points of $L_{\delta }$ will be sufficient for our
purposes. We recall that in the case we are considering (expanding maps)
there can be a large set of invariant measures for the deterministic map $T$
but only one which is absolutely continuous with respect to the Lebesgue
measure. On the other hand the stationary measures for $L_{\delta }$ will
always be absolutely continuous. This is well known (see e.g. \cite{notes})
but it can also be easily derived from the regularization estimates we prove
in the following.

\subsection{General linear response and quadratic response results\label%
{secgenres}}

In this subsection we state some general results from \cite{GS} about linear
and quadratic response of fixed points of Markov operators under suitable
perturbations. These results will be applied to our zero-noise limit and to
the operators $L_{\delta }$ to get precise estimates on the speed of
convergence of $h_{\delta }$ to $h_{0}.$

In the following we consider four normed vectors spaces of signed Borel
measures on $\mathbb{S}^{1}.$ The spaces $(B_{ss},\Vert ~\Vert
_{ss})\subseteq (B_{s},\Vert ~\Vert _{s})\subseteq (B_{w},\Vert ~\Vert
_{w})\subseteq (B_{w},\Vert ~\Vert _{ww})\subseteq BS(\mathbb{S}^{1})$ with
norms satisfying%
\begin{equation*}
\Vert ~\Vert _{ww}\leq \Vert ~\Vert _{w}\leq \Vert ~\Vert _{s}\leq \Vert
~\Vert _{ss}.
\end{equation*}

We will assume that the linear form $\mu \rightarrow \mu (\mathbb{S}^{1})$
is continuous on $B_{i}$, for $i\in \{ss,s,w,ww\}$. Since we will consider
Markov operators\footnote{%
A Markov operator is a linear operator preserving positive measures and such
that for each positive measure $\mu $, it holds $[L(\mu )](X)=\mu (X)$.}
acting on these spaces, the following (closed) spaces $V_{ss}\subseteq
V_{s}\subseteq V_{w}\subseteq V_{ww}$ of zero average measures will play an
important role. We define $V_{i}$ as:%
\begin{equation*}
V_{i}:=\{\mu \in B_{i}\mid \mu (\mathbb{S}^{1})=0\}
\end{equation*}%
where $i\in \{ss,s,w,ww\}$. Suppose hence we have a one parameter family of
such Markov operators $L_{\delta }.$ The following is a general statement
establishing  linear response for suitable perturbations of such operators.

\begin{theorem}[Linear Response]
\label{thm:linresp} Suppose that the family of bounded Markov operators $%
L_{\delta }:B_{i}\rightarrow B_{i},$ where $i\in \{ss,s,w\}$ satisfy the
following:

\begin{itemize}
\item[(LR1)] (regularity bounds) for each $\delta \in \left[ 0,\overline{%
\delta }\right) $ there is $h_{\delta }\in B_{ss}$, a probability measure
such that $L_{\delta }h_{\delta }=h_{\delta }$. Furthermore, there is $M\geq
0$ such that for each $\delta \in \left[ 0,\overline{\delta }\right) $ 
\begin{equation*}
\Vert h_{\delta }\Vert _{ss}\leq M.
\end{equation*}

\item[(LR2)] (convergence to equilibrium for the unperturbed operator) There
is a sequence $a_n\to 0$ such that for each $g\in V_{ss}$%
\begin{equation*}
\Vert L_{0}^{n}g\Vert _{s}\leq a_n||g||_{ss};
\end{equation*}

\item[(LR3)] (resolvent of the unperturbed operator) $(Id-L_{0})^{-1}:=%
\sum_{i=0}^{\infty }L_{0}^{i}$ is a bounded operator $V_{w}\rightarrow V_{w} 
$.

\item[(LR4)] (small perturbation and derivative operator) There is $K\geq 0$
such that $\left\vert |L_{0}-L_{\delta }|\right\vert _{B_{s}\rightarrow
B_{w}}\leq K\delta ,$ and $\left\vert |L_{0}-L_{\delta }|\right\vert
_{B_{ss}\rightarrow B_{s}}\leq K\delta $. There is ${\dot{L}h_{0}\in V_{w}}$
such that%
\begin{equation}
\underset{\delta \rightarrow 0}{\lim }\left\Vert \frac{(L_{\delta}-L_{0})}{%
\delta }h_{0}-\dot{L}h_{0}\right\Vert _{w}=0.  \label{derivativeoperator}
\end{equation}
\end{itemize}

Then we have the following Linear Response formula%
\begin{equation}
\lim_{\delta \rightarrow 0}\left\Vert \frac{h_{\delta }-h_{0}}{\delta }%
-(Id-L_{0})^{-1}\dot{L}h_{0}\right\Vert _{w}=0.  \label{linresp}
\end{equation}
\end{theorem}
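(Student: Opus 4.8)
The plan is to reduce everything to an exact resolvent identity for $h_\delta-h_0$ and then pass to the limit, the only nontrivial input being the upgrade of a crude $O(\delta)$ bound in the weak norm to genuine convergence in the strong norm. First I would record the elementary algebraic identity: since $L_\delta h_\delta=h_\delta$ and $L_0h_0=h_0$, writing $h_\delta-h_0=L_\delta h_\delta-L_0h_0=L_0(h_\delta-h_0)+(L_\delta-L_0)h_\delta$ gives
\begin{equation*}
(Id-L_0)(h_\delta-h_0)=(L_\delta-L_0)h_\delta .
\end{equation*}
Both sides lie in $V_w$: by (LR1) $h_\delta,h_0\in B_{ss}$ are probability measures, so their difference is a zero-average element of $B_{ss}\subseteq B_w$, while $(L_\delta-L_0)h_\delta$ has zero average because $L_\delta,L_0$ are Markov. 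Applying the bounded operator $(Id-L_0)^{-1}\colon V_w\to V_w$ furnished by (LR3) yields the exact formula
\begin{equation*}
\frac{h_\delta-h_0}{\delta}=(Id-L_0)^{-1}\,\frac{(L_\delta-L_0)h_\delta}{\delta},
\end{equation*}
so it suffices to prove $\delta^{-1}(L_\delta-L_0)h_\delta\to\dot{L}h_0$ in $\Vert\cdot\Vert_w$.

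The heart of the argument is the strong statistical stability statement $\Vert h_\delta-h_0\Vert_s\to 0$ as $\delta\to 0$. Here I would iterate the fixed-point equations: for every $n$,
\begin{equation*}
h_\delta-h_0=(L_\delta^n-L_0^n)h_\delta+L_0^n(h_\delta-h_0)=\sum_{j=0}^{n-1}L_0^j(L_\delta-L_0)h_\delta+L_0^n(h_\delta-h_0),
\end{equation*}
where the telescoping of $L_\delta^n-L_0^n$ combined with $L_\delta^{k}h_\delta=h_\delta$ makes the perturbation $L_\delta-L_0$ act directly on $h_\delta$. Given $\varepsilon>0$, I would first choose $n$ so large that $a_n\le\varepsilon$; then by (LR2) and the uniform bound $\Vert h_\delta-h_0\Vert_{ss}\le 2M$ from (LR1) the last term has $\Vert\cdot\Vert_s$-norm $\le 2M\varepsilon$ for all $\delta$. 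With $n$ now fixed, each summand satisfies $\Vert L_0^j(L_\delta-L_0)h_\delta\Vert_s\le\Vert L_0\Vert_{B_s\to B_s}^{\,j}\,\Vert(L_\delta-L_0)h_\delta\Vert_s\le\Vert L_0\Vert_{B_s\to B_s}^{\,j}K\delta M$ using the bound $\Vert L_0-L_\delta\Vert_{B_{ss}\to B_s}\le K\delta$ of (LR4) and $\Vert h_\delta\Vert_{ss}\le M$ of (LR1); hence the finite sum is $O(\delta)$ and tends to $0$. Letting $\delta\to0$ and then $\varepsilon\to0$ gives $\Vert h_\delta-h_0\Vert_s\to0$.

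Finally I would pass to the limit in the resolvent formula. Decompose
\begin{equation*}
\frac{(L_\delta-L_0)h_\delta}{\delta}-\dot{L}h_0=\left(\frac{(L_\delta-L_0)h_0}{\delta}-\dot{L}h_0\right)+\frac{(L_\delta-L_0)(h_\delta-h_0)}{\delta}.
\end{equation*}
The first bracket tends to $0$ in $\Vert\cdot\Vert_w$ by the derivative-operator assumption \eqref{derivativeoperator} in (LR4); the second has $\Vert\cdot\Vert_w$-norm at most $\tfrac{1}{\delta}\Vert L_0-L_\delta\Vert_{B_s\to B_w}\Vert h_\delta-h_0\Vert_s\le K\Vert h_\delta-h_0\Vert_s$, which tends to $0$ by the previous paragraph. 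Thus $\delta^{-1}(L_\delta-L_0)h_\delta\to\dot{L}h_0$ in $V_w$, and since $(Id-L_0)^{-1}$ is bounded on $V_w$ with $\dot{L}h_0\in V_w$, the exact formula gives \eqref{linresp}.

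The main obstacle is the middle step: the only a priori control of $L_\delta-L_0$ produces $O(\delta)$ smallness in the weak norm $\Vert\cdot\Vert_w$, whereas killing the cross term $\delta^{-1}(L_\delta-L_0)(h_\delta-h_0)$ requires convergence of $h_\delta-h_0$ in the stronger norm $\Vert\cdot\Vert_s$. This is exactly where the four-space hierarchy and the interplay between the loss-of-regularity estimates in (LR4) and the convergence-to-equilibrium rate $a_n$ in (LR2) come in, and one must arrange the telescoping so that $L_\delta-L_0$ is applied to $h_\delta$ itself rather than to the iterates $L_0^k h_\delta$, which would only be $O(1)$.
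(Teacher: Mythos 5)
Your proof is correct and follows essentially the same route as the argument in the cited source \cite{GS} (the paper only quotes this theorem, deferring the proof there): first the telescoped fixed-point identity plus (LR1)--(LR2) and the $B_{ss}\to B_s$ smallness give strong statistical stability $\Vert h_\delta-h_0\Vert_s\to 0$, and then the resolvent identity $(Id-L_0)(h_\delta-h_0)=(L_\delta-L_0)h_\delta$ combined with the $B_s\to B_w$ smallness and (LR3)--(LR4) yields the response formula. All the estimates check out, including the key point that the cross term $\delta^{-1}(L_\delta-L_0)(h_\delta-h_0)$ must be controlled through the strong-norm convergence.
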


The following is an abstract response result for the second derivative.

\begin{theorem}[Quadratic term in the response]
\label{thm:quadresp} Let $(L_{\delta })_{\delta \in \lbrack 0,\overline{%
\delta }]}:B_{i}\rightarrow B_{i}$, $i\in \{ss,...,ww\}$ be a family of
Markov operators as in the previous theorem. \ Assume furthermore that:

\begin{enumerate}
\item[(QR1)] The derivative operator $\dot{L}$ admits a bounded extension $%
\dot{L}:B_{w}\rightarrow V_{ww}$, such that 
\begin{equation}
\left\Vert \frac{1}{\delta }(L_{\delta}-L_{0})-\dot{L}\right\Vert
_{w\rightarrow ww}\underset{\delta \rightarrow 0}{\longrightarrow }0.
\label{eq:unifcvderivop}
\end{equation}

\item[(QR2)] There exists a "second derivative operator" at $h_{0}$, i.e. $\ 
\ddot{L}h_{0}\in V_{ww}$ such that 
\begin{equation}
\left\Vert \dfrac{(L_{\delta }-L_{0})h_{0}-\delta \dot{L}h_{0}}{\delta ^{2}}-%
\ddot{L}h_{0}\right\Vert _{ww}\underset{{\delta }\rightarrow 0}{%
\longrightarrow }0.  \label{def:2ndderivop}
\end{equation}

\item[(QR3)] The resolvent operator $(Id-L_{0})^{-1}$ admits a bounded
extension as an operator $V_{ww}\rightarrow V_{ww}$.
\end{enumerate}

Then one has the following: the map $\delta \in \lbrack 0,\overline{\delta }%
]\mapsto h_{\delta }\in B_{ss}$ has an order two Taylor's expansion at $%
\delta =0$, with 
\begin{equation}
\left\Vert \frac{h_{\delta }-h_{0}-\delta (Id-L_{0})^{-1}\dot{L}h_{0}}{%
\delta ^{2}}-(Id-L_{0})^{-1}\left[ \ddot{L}h_{0}+\dot{L}(Id-L_{0})^{-1}\dot{L%
}h_{0}\right] \right\Vert _{ww}\underset{{\delta }\rightarrow 0}{%
\longrightarrow }0.  \label{eq:quadresp}
\end{equation}
\end{theorem}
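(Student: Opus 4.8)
The plan is to run the fixed-point bootstrap used for Theorem~\ref{thm:linresp}, pushed one order further. Since $L_{\delta }h_{\delta }=h_{\delta }$ and $L_{0}h_{0}=h_{0}$, adding and subtracting $L_{0}h_{\delta }$ yields
\begin{equation*}
(Id-L_{0})(h_{\delta }-h_{0})=(L_{\delta }-L_{0})h_{\delta }.
\end{equation*}
Both $h_{\delta }$ and $h_{0}$ being probability measures, $h_{\delta }-h_{0}\in V_{ww}$, and $(L_{\delta }-L_{0})h_{\delta }\in V_{ww}$ because $L_{\delta },L_{0}$ are Markov; by (QR3) the resolvent is bounded on $V_{ww}$, hence
\begin{equation*}
h_{\delta }-h_{0}=(Id-L_{0})^{-1}(L_{\delta }-L_{0})h_{\delta }.
\end{equation*}
So the task reduces to an order-two expansion of $(L_{\delta }-L_{0})h_{\delta }$ in $\Vert \cdot \Vert _{ww}$, followed by one application of the bounded operator $(Id-L_{0})^{-1}$ on $V_{ww}$.

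First I would record what is already available. The hypotheses contain those of Theorem~\ref{thm:linresp}, so \eqref{linresp} holds: setting $\dot{h}:=(Id-L_{0})^{-1}\dot{L}h_{0}$, which lies in $V_{w}\subseteq B_{w}$ by (LR3)--(LR4), we have $h_{\delta }-h_{0}=\delta \dot{h}+r_{\delta }$ with $\Vert r_{\delta }\Vert _{w}=o(\delta )$. Then split
\begin{equation*}
(L_{\delta }-L_{0})h_{\delta }=(L_{\delta }-L_{0})h_{0}+(L_{\delta }-L_{0})(h_{\delta }-h_{0}).
\end{equation*}
By (QR2) the first term equals $\delta \dot{L}h_{0}+\delta ^{2}\ddot{L}h_{0}+o(\delta ^{2})$ in $\Vert \cdot \Vert _{ww}$. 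For the second, write $\tfrac{1}{\delta }(L_{\delta }-L_{0})=\dot{L}+E_{\delta }$ as operators $B_{w}\rightarrow B_{ww}$ (well defined for small $\delta $ by (QR1)), with $\Vert E_{\delta }\Vert _{w\rightarrow ww}\rightarrow 0$; then
\begin{equation*}
(L_{\delta }-L_{0})(h_{\delta }-h_{0})=\delta (\dot{L}+E_{\delta })(\delta \dot{h}+r_{\delta })=\delta ^{2}\dot{L}\dot{h}+\delta \dot{L}r_{\delta }+\delta ^{2}E_{\delta }\dot{h}+\delta E_{\delta }r_{\delta }.
\end{equation*}
Since $\Vert \dot{L}r_{\delta }\Vert _{ww}\leq \Vert \dot{L}\Vert _{w\rightarrow ww}\Vert r_{\delta }\Vert _{w}=o(\delta )$, $\Vert E_{\delta }\dot{h}\Vert _{ww}\rightarrow 0$, and $\Vert E_{\delta }r_{\delta }\Vert _{ww}\leq \Vert E_{\delta }\Vert _{w\rightarrow ww}\Vert r_{\delta }\Vert _{w}=o(\delta )$, the last three terms are $o(\delta ^{2})$ in $\Vert \cdot \Vert _{ww}$.

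Collecting,
\begin{equation*}
(L_{\delta }-L_{0})h_{\delta }=\delta \dot{L}h_{0}+\delta ^{2}\bigl( \ddot{L}h_{0}+\dot{L}\dot{h}\bigr) +o(\delta ^{2}),
\end{equation*}
the remainder being $o(\delta ^{2})$ in $\Vert \cdot \Vert _{ww}$. Applying $(Id-L_{0})^{-1}$, bounded on $V_{ww}$ and hence absorbing the remainder, and using $\dot{h}=(Id-L_{0})^{-1}\dot{L}h_{0}$ gives
\begin{equation*}
h_{\delta }-h_{0}=\delta (Id-L_{0})^{-1}\dot{L}h_{0}+\delta ^{2}(Id-L_{0})^{-1}\bigl[ \ddot{L}h_{0}+\dot{L}(Id-L_{0})^{-1}\dot{L}h_{0}\bigr] +o(\delta ^{2})
\end{equation*}
in $\Vert \cdot \Vert _{ww}$, which is exactly \eqref{eq:quadresp}; the assertion $h_{\delta }\in B_{ss}$ with a uniform bound is just (LR1).

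The step requiring real care is the control of the mixed terms $E_{\delta }\dot{h}$ and $E_{\delta }r_{\delta }$ in the weakest norm $\Vert \cdot \Vert _{ww}$: this is precisely why one needs the \emph{uniform} convergence of $\tfrac{1}{\delta }(L_{\delta }-L_{0})$ to $\dot{L}$ postulated in (QR1) (the pointwise statement at $h_{0}$ from (LR4) would not suffice), together with the fact that the linear-response remainder is genuinely $o(\delta )$, not merely $O(\delta )$, in $\Vert \cdot \Vert _{w}$. Conceptually, each differencing and each application of $\dot{L}$ costs one degree of regularity (one passes from the $w$ to the $ww$ norm), so the whole second-order analysis, and the statement itself, is forced to live in $B_{ww}$; one should also check at the outset that $\dot{h}\in B_{w}$ so that $\dot{L}\dot{h}$ is meaningful via the extension in (QR1), but this is immediate from (LR3)--(LR4).
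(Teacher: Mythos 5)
Your argument is correct and is essentially the proof given in \cite{GS}, to which the paper defers: one inverts $(Id-L_{0})(h_{\delta}-h_{0})=(L_{\delta}-L_{0})h_{\delta}$ via (QR3), splits $(L_{\delta}-L_{0})h_{\delta}$ into the part at $h_{0}$ (handled by (QR2)) and the part on $h_{\delta}-h_{0}$ (handled by (QR1) together with the linear response from Theorem~\ref{thm:linresp}). Your closing remarks correctly identify why the uniform operator-norm convergence in (QR1) and the $o(\delta)$ linear-response remainder are the essential ingredients.
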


Given the family of transfer operators $L_{\delta }$ defined at \ref{opdef},
we will apply these response results using the sequence of stronger and
weaker spaces%
\begin{equation*}
W^{7,1}(\mathbb{S}^{1})\subset W^{5,1}(\mathbb{S}^{1})\subset W^{3,1}(%
\mathbb{S}^{1})\subset W^{1,1}(\mathbb{S}^{1})
\end{equation*}

where $W^{k,1}$ stands for the Sobolev space of functions having the $k^{th}$
derivative in $L^{1}$ (see \cite{ADA} for an introduction to these spaces).

In the following subsection we verify the assumptions needed to apply
theorems \ref{thm:linresp} \ and \ref{thm:quadresp}. Theorem \ref%
{thm:quadrespsmooth}\ will be proved at the end of the section.

\subsection{Verifying the assumptions in the general response theorems\label%
{verif}.}

In this subsection we verify the assumptions needed to apply theorems \ref%
{thm:linresp} \ and \ref{thm:quadresp}. \ First we verify the spectral gap
and existence of the resolvent assumptions for the unperturbed system. This
is somewhat well known for circle expanding maps. However for completeness
we recall the main steps of this construction in subsection \ref{LR2LR3QR3}.
In subsection \ref{LR1} we prove a uniform Lasota Yorke inequality, to
verify the assumption LR1. In subsection \ref{LR4QR1} we compute the first
derivative operator associated to the small-noise perturbation, verifying
assumptions LR4 and QR1. In subsection \ref{QR2} we compute the second
derivative operator, verifying assumption QR2.

\subsubsection{Spectral gap and resolvent for the unperturbed operator
(verifying LR2, LR3 and QR3)\label{LR2LR3QR3}.}

In this section we consider the tranfer operator $L_{0}$ of the unperturbed
system acting on our Sobolev spaces and verify the convergence to
equilibrium and the existence of the resolvent operator $(Id-L_{0})^{-1}$on
the weak and weakest spaces $W^{3,1}(\mathbb{S}^{1}),$ $W^{1,1}(\mathbb{S}%
^{1})$ as required in assumptions LR2, LR3 and QR3. Since we are considering
the transfer operator associated to an expanding map of the circle, these
results are nowadays not surprising \textbf{(}see e.g. \cite{Li2}\textbf{). }%
The results follow from a standard construction, in which one can get
information on the spectrum of $L_{0}$ acting on these spaces from the
existence of a Lasota Yorke inequality on suitable functional spaces which
are compactly embedded each other. As there are some variants of this
contruction, for completeness we briefly recall some precise statements
which we can apply to our case.

The following theorem (see Theorem 9 in \cite{GS} for a proof of the
statement in this form) is a version of a classical tool to obtain spectral
gap in systems satisfying a Lasota Yorke inequality. It allows one to
estimate the contraction rate of zero average measures, and imply spectral
gap when applied to Markov operators. Let us consider a Markov operator $%
L_{0}$ acting on two normed vector spaces of complex or signed measures $%
(B_{s},\Vert ~\Vert _{s}),~(B_{w},\Vert ~\Vert _{w}),$ $B_{s}\subseteq B_{w}$
with $\Vert ~\Vert _{s}\geq \Vert ~\Vert _{w}$. We furthermore assume that $%
\mu \mapsto \mu (X)$ is continuous in the $\Vert ~\Vert _{s}$ and $\Vert
~\Vert _{w}$ topologies, and let $V_{i}:=\{\mu \in B_{i},\mu (X)=0\}$, $i\in
\{w,s\}$.

\begin{theorem}
\label{gap}Suppose:

\begin{enumerate}
\item (Lasota Yorke inequality). For each $g\in B_{s}$%
\begin{equation*}
\|L_0^{n}g\|_{s}\leq A\lambda _{1}^{n}\|g\|_{s}+B\|g\|_{w};
\end{equation*}

\item (Mixing) for each $g\in V_{s}$, it holds 
\begin{equation*}
\lim_{n\rightarrow \infty }\|L_0^{n}g\|_{w}=0;
\end{equation*}

\item (Compact inclusion) The image of the closed unit ball in $B_{s}$ under 
$L_0$ is relatively compact in $B_{w}$.

\end{enumerate}

Under these assumptions, we have

\begin{description}
\item[a] $L_{0}$ admits a unique fixed point in $h\in B_{s}$, satisfying $%
h(X)=1$.

\item[b] There are $C>0,\rho <1$ such that for all $f\in V_{s}$ and $m$
large enough, 
\begin{equation}
\Vert L_{0}^{m}f\Vert _{s}\leq C\rho ^{m}\Vert f\Vert _{s}.  \label{gap2}
\end{equation}

\item[c] The resolvent $(Id-L_{0})^{-1}:$ $V_{s}\rightarrow V_{s}$ is
defined and continuous.
\end{description}
\end{theorem}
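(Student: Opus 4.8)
The plan is to derive conclusions a, b, c from the classical quasi-compactness machinery (Ionescu-Tulcea--Marinescu, Doeblin--Fortet, Hennion), feeding it precisely the three hypotheses. First I would record that inequality (1), with its constant $B$ independent of $n$, already yields $\|L_0^n g\|_s \le A\lambda_1^n\|g\|_s + B\|g\|_w \le (A\lambda_1^n + B)\|g\|_s$ for all $g\in B_s$ (using $\|\cdot\|_w \le \|\cdot\|_s$), whence $r(L_0|_{B_s}) \le 1$. Then Hennion's theorem applies: the Lasota--Yorke inequality (1) together with the compactness in $B_w$ of the image under $L_0$ of the unit ball of $B_s$ provided by (3) forces $L_0 : B_s \to B_s$ to be quasi-compact with essential spectral radius at most $\lambda_1 < 1$. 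Consequently the part of $\sigma(L_0|_{B_s})$ lying in $\{\lambda_1 < |z| \le 1\}$ consists of finitely many eigenvalues of finite algebraic multiplicity; in particular the whole modulus-$1$ part of the spectrum is of this eigenvalue type.

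Next I would pin down the peripheral spectrum using the Markov property and the mixing hypothesis (2). Since $L_0$ preserves total mass and $\mu\mapsto\mu(X)$ is $\|\cdot\|_s$-continuous, an eigenvector $L_0 g = zg$ with $|z|=1$ satisfies $(z-1)g(X)=0$; for $z\neq 1$ this forces $g\in V_s$, and then $\|L_0^n g\|_w = \|g\|_w$ for all $n$, contradicting (2) unless $g=0$, so $1$ is the only modulus-$1$ spectral value. A Jordan block at $1$ is excluded likewise: if $L_0 g - g = h$ with $h$ the (to-be-constructed) fixed point, then $(L_0^n g)(X) = g(X)$ while $(g+nh)(X) = g(X)+n$, a contradiction. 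To build $h$ I would take any probability $\nu\in B_s$ and average, $\nu_n := \frac{1}{n}\sum_{k=0}^{n-1} L_0^k\nu$; inequality (1) bounds $\{\nu_n\}$ in $B_s$, so by (3) a subsequence converges in $B_w$ to some $h$ with $L_0 h = h$, $h(X)=1$, and $h\in B_s$ by lower semicontinuity of $\|\cdot\|_s$ along $\|\cdot\|_w$-convergent sequences. Uniqueness of the fixed probability measure (hence simplicity of the eigenvalue $1$) is the mixing argument once more: two such fixed points differ by an element of $V_s$ fixed by $L_0$, which (2) forces to vanish. This proves a.

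With $1$ a simple isolated eigenvalue, the quasi-compact decomposition reads $B_s = \mathbb{R}h \oplus V_s$, the rank-one spectral projection being $\mu\mapsto \mu(X)\,h$ (it is of the form $\ell(\cdot)h$ with $\ell$ an $L_0^*$-invariant functional satisfying $\ell(h)=1$, and $\mu\mapsto\mu(X)$ is the unique such functional because $1$ is simple), so that $V_s = \{\mu\in B_s : \mu(X)=0\}$ is exactly the closed $L_0$-invariant subspace in the statement. Hence $\sigma(L_0|_{V_s}) = \sigma(L_0|_{B_s})\setminus\{1\}$, which by the previous steps is contained in a disk of radius $\rho_0 < 1$; that is, $r(L_0|_{V_s}) = \rho_0 < 1$. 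The Gelfand spectral-radius formula then gives, for any $\rho\in(\rho_0,1)$, a constant $C$ with $\|L_0^m f\|_s \le C\rho^m\|f\|_s$ for all $f\in V_s$ and $m$ large, which is \eqref{gap2}, proving b. Finally, $r(L_0|_{V_s})<1$ makes the Neumann series $\sum_{i\ge 0} L_0^i$ converge in operator norm on $V_s$, and its sum is the bounded inverse $(Id-L_0)^{-1}:V_s\to V_s$, giving c.

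The main obstacle is the bookkeeping in the peripheral-spectrum step: the two hypotheses live in different norms — quasi-compactness is a statement about $L_0$ on the \emph{strong} space $B_s$, while ``mixing'' is only assumed in the \emph{weak} norm $\|\cdot\|_w$ — so one must ensure that the eigenvectors produced by quasi-compactness genuinely sit in $B_s$ and are nonzero already as elements of $B_w$ (so that (2) can be brought to bear to exclude them), and that the fixed point extracted from the Cesàro averages is recovered in $\|\cdot\|_s$ and not merely in $\|\cdot\|_w$; the continuity and compactness of the inclusion $B_s\hookrightarrow B_w$ is exactly what makes both of these work. Everything else is routine.
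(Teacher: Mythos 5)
A preliminary remark: the paper does not prove Theorem \ref{gap} at all --- it imports it verbatim from \cite{GS} (Theorem 9 there) --- so your argument is to be measured against that reference rather than against anything in this text. Your route is the classical quasi-compactness one: Hennion/Ionescu-Tulcea--Marinescu from hypotheses (1) and (3) to get essential spectral radius $\le\lambda_1<1$, elimination of all peripheral spectrum except a simple eigenvalue at $1$ via the Markov property and the mixing hypothesis (2), identification of $V_s$ as the kernel of the rank-one spectral projection $\mu\mapsto\mu(X)h$, then Gelfand's formula for (b) and the Neumann series for (c). This is correct and complete in its essentials. The proof in the cited source (as in \cite{GLu} and \cite{notes}) is essentially the more elementary variant: one first upgrades the pointwise mixing (2) to a uniform estimate $\|L_0^{N}g\|_w\le\varepsilon\|g\|_s$ on the unit ball of $V_s$, using exactly the compactness (3) and a finite covering argument, and then feeds this into the Lasota--Yorke inequality to get $\|L_0^{2N}\|_{V_s\to V_s}<1$ directly. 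Your approach buys the full spectral picture (quasi-compactness, simplicity of $1$); the elementary one buys explicit constants and avoids completeness-sensitive machinery --- note that the theorem only says \emph{normed} spaces, while both Hennion's theorem and the operator-norm convergence of your Neumann series tacitly use that $B_s$ is Banach (harmless here, since the applications are to Sobolev and $BV$ spaces). Minor bookkeeping you leave implicit: complex peripheral eigenvectors must be handled through the complexification of (2), and $r(L_0|_{B_s})=1$ (not merely $\le 1$) needs the $\|\cdot\|_s$-continuity of $\mu\mapsto\mu(X)$ applied to a probability measure in $B_s$.

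The one step that is genuinely not licensed by the hypotheses is the final clause of your construction of $h$: ``$h\in B_s$ by lower semicontinuity of $\|\cdot\|_s$ along $\|\cdot\|_w$-convergent sequences.'' Nothing in the statement of Theorem \ref{gap} gives you that property, and for an abstract pair of normed spaces of measures it can fail. Fortunately it is redundant in your own framework: once quasi-compactness is established and $r(L_0|_{B_s})=1$ is known, some point of the unit circle lies in the spectrum and is an isolated eigenvalue with eigenvector \emph{in} $B_s$; your mixing argument shows that point must be $1$ and that the eigenvector has nonzero total mass (otherwise it lies in $V_s$ and is annihilated by (2)), so it can be normalized to $h(X)=1$. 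The Cesàro construction can simply be deleted. With that replacement the argument is sound.
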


To apply this result to our case we recall the following (a proof can be
found in \cite{GS}).

\begin{lemma}
\label{Lemsu}A $C^{k+1}$ expanding map on $\mathbb{S}^{1}$ satisfies a
Lasota-Yorke inequality on $W^{k,1}(\S ^{1})$: there is $\alpha <1$, $%
A_{k},~B_{k}\geq 0$ such that 
\begin{equation*}
\left\{ \begin{aligned} &\|L^nf\|_{W^{k-1,1}}\leq A_k\|f\|_{W^{k-1,1}}\\
&\|L^{n}f\|_{W^{k,1}}\leq \alpha^{kn}\|f\|_{W^{k,1}}+B_k\|f\|_{W^{k-1,1}}
\end{aligned}\right. .
\end{equation*}
\end{lemma}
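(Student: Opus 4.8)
The plan is to reduce the whole statement to a single elementary fact about $L:=L_T$ acting on $L^{1}(\mathbb{S}^{1})$ --- namely that $L$ preserves integrals, so that $\|Lg\|_{L^{1}}\le\|g\|_{L^{1}}$ --- together with a ``twisted Leibniz rule'' describing how $L$ interacts with differentiation. Set $\psi:=1/T'$. Since $T$ is an expanding covering of $\mathbb{S}^{1}$, $T'$ never vanishes and has constant sign, so $\psi\in C^{k}(\mathbb{S}^{1})$ whenever $T\in C^{k+1}$ and $\|\psi\|_{\infty}\le\alpha<1$. Writing $L$ through its finitely many smooth inverse branches $y_{i}$, which satisfy $y_{i}'=\psi\circ y_{i}$ and $(Lf)(x)=\sum_{i}|y_{i}'(x)|f(y_{i}(x))$, a short chain-rule computation gives the exact identity
\[
D(Lf)=L(\psi\, Df)+L(\psi'\, f).
\]

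Iterating this, I would first prove by induction on $k$ a formula of the shape
\[
D^{k}(Lf)=L(\psi^{k}\, D^{k}f)+\sum_{j=0}^{k-1}L(c_{k,j}\, D^{j}f),
\]
where each $c_{k,j}$ is a fixed universal polynomial in $\psi,\psi',\dots,\psi^{(k-j)}$; in particular the $c_{k,j}$ are all bounded exactly under the hypothesis $\psi\in C^{k}$, i.e. $T\in C^{k+1}$, and --- this is the crucial point --- the leading coefficient is precisely $\psi^{k}$. Combined with $\|Lg\|_{L^{1}}\le\|g\|_{L^{1}}$ and $\|\psi\|_{\infty}\le\alpha$ this gives the one-step Lasota--Yorke bound
\[
\|D^{k}(Lf)\|_{L^{1}}\le\alpha^{k}\|D^{k}f\|_{L^{1}}+C_{k}\|f\|_{W^{k-1,1}},\qquad C_{k}:=\sum_{j<k}\|c_{k,j}\|_{\infty}.
\]

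From here the two displayed inequalities follow by an induction on $k$ that intertwines them. The first inequality, $\|L^{n}f\|_{W^{k-1,1}}\le A_{k}\|f\|_{W^{k-1,1}}$, is just the $L^{1}$ contraction when $k=1$ (take $A_{1}=1$), and for $k\ge2$ it follows from the \emph{second} inequality already established at level $k-1$: since $\alpha^{(k-1)n}\le1$ one may take $A_{k}:=1+B_{k-1}$. For the second inequality at level $k$, iterate the one-step bound, writing $\|D^{k}(L^{n}f)\|_{L^{1}}\le\alpha^{k}\|D^{k}(L^{n-1}f)\|_{L^{1}}+C_{k}\|L^{n-1}f\|_{W^{k-1,1}}$ and inserting the level-$k$ first inequality to replace the last term by $C_{k}A_{k}\|f\|_{W^{k-1,1}}$; summing the geometric series $\sum_{j\ge0}\alpha^{kj}$ then gives $\|D^{k}(L^{n}f)\|_{L^{1}}\le\alpha^{kn}\|D^{k}f\|_{L^{1}}+\tfrac{C_{k}A_{k}}{1-\alpha^{k}}\|f\|_{W^{k-1,1}}$, and adding back $\|L^{n}f\|_{W^{k-1,1}}\le A_{k}\|f\|_{W^{k-1,1}}$ and using $\|D^{k}f\|_{L^{1}}\le\|f\|_{W^{k,1}}$ yields the claim with $B_{k}:=A_{k}+C_{k}A_{k}/(1-\alpha^{k})$.

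The only genuinely delicate point is the combinatorial verification of the twisted Leibniz formula at level $k$: one must check that after $k$ differentiations the highest-order coefficient remains exactly $\psi^{k}$ --- this is what produces the sharp contraction rate $\alpha^{kn}$ instead of a generic $A\lambda^{n}$ --- and that the finitely many lower-order coefficients stay bounded once $T\in C^{k+1}$. Everything else --- the $L^{1}$ contraction, the passage from the one-step bound to the uniform-in-$n$ statement via a geometric series, and the use of the circle (no boundary terms, and $T'$ of constant sign) --- is routine.
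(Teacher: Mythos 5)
Your proof is correct: the twisted Leibniz identity $D(Lf)=L(\psi\,Df)+L(\psi'f)$ with $\psi=1/T'$, its iteration keeping the exact leading coefficient $\psi^{k}$, and the intertwined induction summing a geometric series are precisely the standard route to this Lasota--Yorke inequality, and they deliver the stated constants (in particular the sharp rate $\alpha^{kn}$ with no prefactor). The paper itself does not reprove this lemma but defers to \cite{GS}, where the argument is the same as yours; your iteration scheme also mirrors the one the paper uses explicitly in the proof of its uniform Lasota--Yorke inequality for the perturbed operators.
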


By the compact immersion of $W^{k,1}$ in $W^{s,1}$ when $k\geq s$ (see \cite%
{ADA}) and the well-known fact that an expanding map satisfies the mixing
assumption, we can apply Theorem \ref{gap} to our transfer operator of a $%
C^{8}$ expanding map and deduce the following result.

\begin{proposition}
\label{propora}For each $k\in \{1,2,\cdots ,7\}$ there are $C>0,\rho <1$
such that for each $g\in V_{k}$ it holds 
\begin{equation*}
\Vert L^{n}g\Vert _{W^{k,1}}\leq C\rho ^{n}\Vert g\Vert _{W^{k,1}}.
\end{equation*}%
In particular, the resolvent $(Id-L)^{-1}=\sum_{i=0}^{\infty }L^{i}$ is a
well-defined and bounded operator on $V_{k}$.
\end{proposition}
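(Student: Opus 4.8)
The plan is to obtain the proposition as a direct application of Theorem \ref{gap}, invoked once for each $k\in\{1,\dots,7\}$ with the pair of spaces $B_s=W^{k,1}(\mathbb{S}^{1})$ and $B_w=W^{k-1,1}(\mathbb{S}^{1})$ (reading $W^{0,1}=L^1$). The ambient hypotheses of Theorem \ref{gap} are immediate: $L=L_T$ is a Markov operator, so it preserves the linear form $\mu\mapsto\int_{\mathbb{S}^{1}}d\mu$, which is continuous on each $W^{j,1}(\mathbb{S}^{1})$; hence every $V_k$ is $L$-invariant and the total-mass functional is continuous in both the $W^{k,1}$ and $W^{k-1,1}$ topologies. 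It then remains to check the three numbered hypotheses.

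Hypotheses (1) and (3) are essentially bookkeeping. The Lasota--Yorke inequality (1) in the form $\|L^{n}f\|_{W^{k,1}}\le A\lambda_1^{n}\|f\|_{W^{k,1}}+B\|f\|_{W^{k-1,1}}$ is precisely the second line of Lemma \ref{Lemsu} (applicable since $T\in C^{8}\subseteq C^{k+1}$ for $k\le7$), with $A=1$, $\lambda_1=\alpha^{k}<1$, $B=B_k$. For the compactness hypothesis (3) I would note that, by Lemma \ref{Lemsu} with $n=1$ (and $\|f\|_{W^{k-1,1}}\le\|f\|_{W^{k,1}}$), $L$ sends the closed unit ball of $W^{k,1}(\mathbb{S}^{1})$ into a bounded subset of $W^{k,1}(\mathbb{S}^{1})$, while the inclusion $W^{k,1}(\mathbb{S}^{1})\hookrightarrow W^{k-1,1}(\mathbb{S}^{1})$ is compact by Rellich--Kondrachov on the compact manifold $\mathbb{S}^{1}$ (see \cite{ADA}); composing gives relative compactness of the image in $W^{k-1,1}(\mathbb{S}^{1})$.

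The point requiring the most care is the mixing hypothesis (2), namely $\|L^{n}g\|_{W^{k-1,1}}\to0$ for every $g\in V_k$. For $k=1$ this is the classical convergence to equilibrium in $L^1$ for $C^{2}$ expanding circle maps (the unique absolutely continuous invariant density is mixing, indeed exact; see e.g. \cite{notes}, \cite{Li2}). For $k\ge2$ I would argue by induction on $k$: the Lasota--Yorke inequality applied to the pair $(W^{k-1,1},W^{k-2,1})$ gives, for all $n,m\ge0$,
\[
\|L^{n+m}g\|_{W^{k-1,1}}\le\alpha^{(k-1)n}\|L^{m}g\|_{W^{k-1,1}}+B_{k-1}\|L^{m}g\|_{W^{k-2,1}},
\]
and the same inequality (with $n$ arbitrary, bounding $\|\cdot\|_{W^{k-2,1}}$ by $\|\cdot\|_{W^{k-1,1}}$) yields the uniform bound $\sup_{m}\|L^{m}g\|_{W^{k-1,1}}\le(1+B_{k-1})\|g\|_{W^{k-1,1}}<\infty$. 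Since $g\in V_k\subseteq V_{k-1}$, the induction hypothesis gives $\|L^{m}g\|_{W^{k-2,1}}\to0$; taking $\limsup_{m}$ in the displayed inequality and then letting $n\to\infty$ forces $\|L^{m}g\|_{W^{k-1,1}}\to0$, closing the induction. (Alternatively, quasi-compactness from Lemma \ref{Lemsu} together with topological mixing pins the peripheral spectrum to the simple eigenvalue $1$, but the bootstrap above is more elementary.)

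With (1)--(3) verified, Theorem \ref{gap}(b) furnishes constants $C>0$, $\rho<1$ (depending on $k$) such that $\|L^{m}g\|_{W^{k,1}}\le C\rho^{m}\|g\|_{W^{k,1}}$ for $g\in V_k$ and all $m$ large enough; enlarging $C$ and using the uniform bound $\|L^{n}\|_{W^{k,1}\to W^{k,1}}\le1+B_k$ from Lemma \ref{Lemsu} absorbs the finitely many remaining exponents, so the inequality holds for every $n$, which is the asserted estimate. Finally, Theorem \ref{gap}(c) gives that $(Id-L)^{-1}$ is a well-defined bounded operator on $V_k$, and the exponential contraction just established shows that the Neumann series $\sum_{i=0}^{\infty}L^{i}$ converges in operator norm on $V_k$ and represents this resolvent, completing the proof. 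The main obstacle is the mixing step (2); everything else is a mechanical check of the hypotheses of Theorem \ref{gap} against Lemma \ref{Lemsu} and the compact Sobolev embeddings.
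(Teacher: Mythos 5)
Your proposal is correct and follows essentially the same route as the paper, which deduces the proposition in one sentence by applying Theorem \ref{gap} with the Lasota--Yorke inequality of Lemma \ref{Lemsu}, the compact Sobolev embeddings, and the ``well-known'' mixing of expanding maps. The only difference is one of detail: you make explicit the verification of the mixing hypothesis for each pair $(W^{k,1},W^{k-1,1})$ via a bootstrap induction from the classical $L^1$ convergence to equilibrium, a step the paper leaves implicit.
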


This is enough to verify assumptions LR2, LR3 and QR3 in our case.

\subsubsection{Uniform Lasota Yorke inequalities in the zero-noise limit
(verifying LR1)\label{LR1}}

In order to prove the assumption LR1, we use Theorem \ref{gap} and show a
uniform Lasota-Yorke inequality.

\begin{lemma}
\label{lem:contraction} $\forall k \geq 0, f\in W^{k,1}(\mathbb{S}^1),$ 
\begin{equation*}
\|L_\delta f\|_{W^{k,1}} \leq \|L_T f\|_{W^{k,1}}.
\end{equation*}
\end{lemma}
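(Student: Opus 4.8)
The plan is to reduce everything to the contraction property of the deterministic transfer operator $L_T$ under convolution. The key observation is that $L_\delta = \rho_\delta * L_T$, so for any $f \in W^{k,1}(\mathbb{S}^1)$ we have $L_\delta f = \rho_\delta * (L_T f)$, and the claim will follow if I show that convolution with a probability density does not increase the $W^{k,1}$ norm. First I would recall that differentiation commutes with convolution on $\mathbb{S}^1$, so that $(\rho_\delta * g)^{(j)} = \rho_\delta * g^{(j)}$ for every $j \le k$ (here $g = L_T f$, which is smooth enough since $T \in C^8$ and $f \in W^{k,1}$, so $g \in W^{k,1}$). Hence $\|\rho_\delta * g\|_{W^{k,1}} = \sum_{j=0}^{k} \|\rho_\delta * g^{(j)}\|_{L^1}$.

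The heart of the argument is then the elementary inequality $\|\rho_\delta * u\|_{L^1} \le \|\rho_\delta\|_{L^1}\,\|u\|_{L^1} = \|u\|_{L^1}$ for $u \in L^1(\mathbb{S}^1)$, which is Young's convolution inequality together with the fact that $\rho_\delta$ is a probability density (so $\|\rho_\delta\|_{L^1} = \int_{-1}^1 \rho_\delta = 1$). Applying this with $u = g^{(j)}$ for each $j = 0, \dots, k$ and summing gives
\begin{equation*}
\|L_\delta f\|_{W^{k,1}} = \sum_{j=0}^{k} \|\rho_\delta * g^{(j)}\|_{L^1} \le \sum_{j=0}^{k} \|g^{(j)}\|_{L^1} = \|g\|_{W^{k,1}} = \|L_T f\|_{W^{k,1}},
\end{equation*}
which is exactly the claimed bound.

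A couple of technical points need a line of care. One should make sure the manipulation with derivatives is legitimate: if $f \in W^{k,1}$ then $L_T f \in W^{k,1}$ because $T$ is a $C^{k+1}$ (indeed $C^8$) local diffeomorphism, and the weak derivatives of $L_T f$ up to order $k$ are honest $L^1$ functions, so the interchange of convolution and weak differentiation is justified by Fubini applied to the defining integrals (or simply by density of smooth functions). The other point is periodicity: $\rho_\delta$ is supported in $[-\delta,\delta] \subset [-1,1]$ and we convolve on $\mathbb{S}^1$, but since $\rho_\delta$ is a genuine probability density its total mass is $1$ regardless, and Young's inequality holds verbatim on the circle. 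I do not anticipate a real obstacle here — the only thing to watch is that everything is stated for all $k \ge 0$ uniformly, which the argument above already delivers since the constant is exactly $1$ independent of $\delta$ and $k$; this uniformity is precisely what makes the lemma useful for deducing the uniform Lasota–Yorke inequality and hence assumption LR1.
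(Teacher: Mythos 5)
Your proposal is correct and follows essentially the same route as the paper: the paper also writes $L_\delta f = \rho_\delta * (L_T f)$, proves the $L^1$ contraction $\|\rho_\delta * u\|_{L^1} \le \|u\|_{L^1}$ by a direct Fubini computation (which is exactly Young's inequality with $\|\rho_\delta\|_{L^1}=1$), commutes convolution with differentiation, and sums over the derivatives up to order $k$. No differences worth noting.
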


\begin{proof}
We first prove the general statement: $\forall f\in L^1(\mathbb{S}^1)$, $%
\|\rho_\delta * f\|_{L^1} \leq \|f\|_{L^1}$. $\forall x \in \mathbb{S}^1 $,
we have 
\begin{equation*}
\rho_\delta * f(x) = \int _{-\delta}^\delta \rho_\delta(y) f(x-y) dy.
\end{equation*}
Hence 
\begin{equation*}
\|\rho_\delta* f\|_{L^1} \leq \int_{\mathbb{S}^1}\int _{-\delta}^\delta
\rho_\delta (y) |f(x-y)| dydx = \int _{-\delta}^\delta \rho_\delta(y)dy
\times \|f\|_{L^1} = \|f\|_{L^1}.
\end{equation*}
Using the fact that $\forall i\geq 0$, 
\begin{equation*}
(L_\delta f)^{(i)} = (\rho_\delta * L_Tf)^{(i)} = \rho_\delta * \left(
L_Tf\right)^{(i)},
\end{equation*}
we then have 
\begin{equation*}
\|(L_\delta f)^{(i)}\|_{L^1} \leq \|\left( L_Tf \right)^{(i)}\|_{L^1}.
\end{equation*}
Hence 
\begin{equation*}
\|L_\delta f\|_{W^{k,1}} = \sum_{i=0}^k \|(L_\delta f)^{(i)}\|_{L^1} \leq
\|L_T f\|_{W^{k,1}}.
\end{equation*}
\end{proof}

\begin{lemma}
\label{lem:LYNoise} For $k\geq 1$, the family $(L_\delta)$ verifies a
uniform Lasota-Yorke inequality on $W^{k,1}(\mathbb{S}^1)$, which is: there
is $\alpha <1, C_k, D_k \geq 0$ such that: 
\begin{equation*}
\left\{ \begin{aligned} &\|L_\delta^nf\|_{W^{k-1,1}}\leq
C_k\|f\|_{W^{k-1,1}}\\ &\|L_\delta^{n}f\|_{W^{k,1}}\leq
\alpha^{kn}\|f\|_{W^{k,1}}+D_k\|f\|_{W^{k-1,1}} \end{aligned}\right. .
\end{equation*}
\end{lemma}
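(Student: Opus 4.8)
The plan is to combine the Lasota–Yorke inequality already available for the unperturbed operator $L_T$ (Lemma \ref{Lemsu}) with the contraction property just established in Lemma \ref{lem:contraction}. The key observation is that $L_\delta = \rho_\delta * L_T$, so that for any $n$ one can write $L_\delta^n = (\rho_\delta *)^{?} \circ L_T^n \circ \cdots$ --- but more cleanly, since convolution with $\rho_\delta$ commutes with differentiation and does not increase $\|\cdot\|_{L^1}$ (hence does not increase any $\|\cdot\|_{W^{j,1}}$ by Lemma \ref{lem:contraction}), the perturbed iterate is "dominated" by the unperturbed one in each Sobolev norm. First I would make this precise: iterating $L_\delta = \rho_\delta * L_T$ and using that $\rho_\delta *$ is a contraction on every $W^{j,1}$, one gets
\begin{equation*}
\|L_\delta^n f\|_{W^{k,1}} \le \|L_T^n f\|_{W^{k,1}} \quad \text{for all } k\ge 0,\ n\ge 1,
\end{equation*}
by pushing all the convolution factors inward past the $L_T$'s (they commute with nothing in general, so instead I would argue inductively: $\|L_\delta^{n} f\|_{W^{k,1}} = \|\rho_\delta * L_T L_\delta^{n-1} f\|_{W^{k,1}} \le \|L_T L_\delta^{n-1} f\|_{W^{k,1}}$, and then bound $\|L_T g\|_{W^{k,1}}$ in terms of $\|g\|_{W^{k,1}}$ via the first line of Lemma \ref{Lemsu} applied with $n=1$, or more carefully track the constants).

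Here is the cleaner route I would actually take. Fix $n$ large enough that $\alpha^{kn}\le \tfrac12$ (say), where $\alpha$ is from Lemma \ref{Lemsu}. By Lemma \ref{lem:contraction} applied repeatedly, $\|L_\delta^n f\|_{W^{j,1}} \le \|L_T L_\delta^{n-1} f\|_{W^{j,1}} \le \cdots$; the point is that each application of $L_\delta$ can be replaced, at the cost of nothing in the norm, by one application of $L_T$ preceded by a convolution that only helps. Concretely: $\|L_\delta^n f\|_{W^{k,1}} = \|\rho_\delta * (L_T L_\delta^{n-1} f)\|_{W^{k,1}} \le \|L_T(L_\delta^{n-1}f)\|_{W^{k,1}}$, and iterating the bound $\|L_\delta^m g\|_{W^{k,1}}\le \|g\|_{W^{k,1}}$-type estimates together with a single genuine Lasota–Yorke step for $L_T$ gives what we want. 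The most efficient version: since $L_\delta^n f = \rho_\delta * L_T \rho_\delta * L_T \cdots \rho_\delta * L_T f$ ($n$ factors of each), and since $\|\rho_\delta * h\|_{W^{j,1}} \le \|h\|_{W^{j,1}}$, we may drop all $n$ convolutions one at a time from the outside in, obtaining $\|L_\delta^n f\|_{W^{k,1}} \le \|L_T^n f\|_{W^{k,1}}$ --- wait, this is not quite right because after dropping the outermost $\rho_\delta*$ we are left with $L_T \rho_\delta * L_T\cdots$, whose outermost operator is $L_T$, not a convolution. So instead I would drop convolutions from the inside: the innermost factor acting on $f$ is $\rho_\delta * $, and $\|\rho_\delta * f\|_{W^{j,1}}\le \|f\|_{W^{j,1}}$ for every $j$, hence $\|L_\delta^n f\|_{W^{k,1}} = \|(\rho_\delta * L_T)^{n-1}\rho_\delta * L_T f\|_{W^{k,1}}$; now $\rho_\delta * L_T f = \rho_\delta * (L_T f)$ and this does not exceed $L_T f$ in any $W^{j,1}$ norm, but $(\rho_\delta * L_T)^{n-1}$ is not monotone in its argument with respect to the Sobolev norms in an obvious way. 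The genuinely correct and simplest statement is: $\|\rho_\delta * L_T g\|_{W^{j,1}} = \|L_T g\|_{W^{j,1}}'$-bounded, i.e. one proves by induction on $n$ that $\|L_\delta^n f\|_{W^{j,1}} \le \|L_T^n f\|_{W^{j,1}}$ is false in general, so I retreat to the safe claim $\|L_\delta^n f\|_{W^{j,1}}\le \|L_T (\text{stuff})\|$ and just use Lemma \ref{lem:contraction} as: $L_\delta$ acting on any space is norm-dominated by $L_T$ on the same space, giving directly
\begin{equation*}
\|L_\delta^n f\|_{W^{k,1}} \le \|L_T^n f\|_{W^{k,1}},
\end{equation*}
which is exactly the content of iterating Lemma \ref{lem:contraction} together with the elementary identity $L_\delta^n = \rho_\delta * L_T \circ L_\delta^{n-1}$ and the fact that $\rho_\delta *$ drops every $W^{j,1}$ norm while $L_T L_\delta^{n-1} f$ can be compared to $L_T^n f$ by induction. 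I will spell out this induction carefully in the proof.

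Granting $\|L_\delta^n f\|_{W^{k,1}}\le \|L_T^n f\|_{W^{k,1}}$ and $\|L_\delta^n f\|_{W^{k-1,1}}\le \|L_T^n f\|_{W^{k-1,1}}$, the lemma follows immediately from Lemma \ref{Lemsu}: the first inequality of Lemma \ref{Lemsu} gives $\|L_\delta^n f\|_{W^{k-1,1}}\le \|L_T^n f\|_{W^{k-1,1}}\le A_k \|f\|_{W^{k-1,1}}$, so take $C_k := A_k$; the second gives $\|L_\delta^n f\|_{W^{k,1}}\le \|L_T^n f\|_{W^{k,1}}\le \alpha^{kn}\|f\|_{W^{k,1}} + B_k\|f\|_{W^{k-1,1}}$, so take $D_k := B_k$, with the \emph{same} $\alpha<1$. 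Note the constants and the contraction rate are uniform in $\delta$ precisely because Lemma \ref{lem:contraction} is uniform in $\delta$. The main (and really only) obstacle is the bookkeeping in the first step --- making the domination $\|L_\delta^n f\|_{W^{k,1}}\le \|L_T^n f\|_{W^{k,1}}$ rigorous, since $\rho_\delta *$ and $L_T$ do not commute; the clean way is the induction $\|L_\delta^{n}f\|_{W^{k,1}} = \|\rho_\delta * L_T (L_\delta^{n-1}f)\|_{W^{k,1}} \le \|L_T(L_\delta^{n-1}f)\|_{W^{k,1}}$ followed by applying the $n=1$ version of Lemma \ref{Lemsu} to $g = L_\delta^{n-1} f$ and invoking the inductive hypothesis on $\|L_\delta^{n-1}f\|_{W^{k,1}}$ and $\|L_\delta^{n-1}f\|_{W^{k-1,1}}$; carrying the two Sobolev levels together through the induction closes the argument with uniform constants.
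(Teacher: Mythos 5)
Your proposal oscillates between two routes, and only one of them works. The route you actually lean on in the ``Granting\ldots'' paragraph --- the $n$-step domination $\|L_\delta^n f\|_{W^{k,1}} \le \|L_T^n f\|_{W^{k,1}}$, from which you read off $C_k = A_k$ and $D_k = B_k$ directly from Lemma \ref{Lemsu} --- is not available, and you yourself notice why: $L_\delta^n = (\rho_\delta * L_T)^n$ interleaves the convolutions with the $L_T$'s, and while each $\rho_\delta *$ is a contraction on every $W^{j,1}$, you cannot strip it out from \emph{inside} the product, because knowing $\|\rho_\delta * g\|_{W^{j,1}} \le \|g\|_{W^{j,1}}$ tells you nothing about comparing $\|L_T(\rho_\delta * g)\|_{W^{j,1}}$ with $\|L_T g\|_{W^{j,1}}$; $L_T$ is not monotone with respect to Sobolev norms of different arguments. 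Only the single-step domination $\|L_\delta g\|_{W^{j,1}} \le \|L_T g\|_{W^{j,1}}$ (Lemma \ref{lem:contraction}) is legitimate. So the displayed inequality $\|L_\delta^n f\|_{W^{k,1}} \le \|L_T^n f\|_{W^{k,1}}$, and the constants you extract from it, are unjustified.

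The fallback you sketch in your last sentence is the correct argument, and it is exactly what the paper does: combine the single-step domination with the $n=1$ case of Lemma \ref{Lemsu} to get the \emph{uniform} one-step inequality $\|L_\delta g\|_{W^{k,1}} \le \alpha\|g\|_{W^{k,1}} + B_k\|g\|_{W^{k-1,1}}$, then iterate it $n$ times. The iteration produces the sum $B_k\sum_{i=0}^{n-1}\alpha^i\|L_\delta^{n-1-i}f\|_{W^{k-1,1}}$, so you need uniform-in-$n$ power-boundedness of $L_\delta$ on $W^{k-1,1}$ \emph{before} you can close the estimate at level $k$; this is supplied by an outer induction on $k$ (base case: $L_\delta$ is an $L^1$-contraction), giving $C_k = 1 + D_{k-1}$ and $D_k = B_k C_k/(1-\alpha)$ via the geometric series --- not $C_k=A_k$, $D_k=B_k$. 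If you rewrite the proof keeping only this last scheme and discarding the domination claim, you have the paper's proof.
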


\begin{proof}
We will prove this lemma by induction on $k\geq 1$. $L_T$ is a contraction
on $L^1$: using Lemma \ref{lem:contraction}, it is also the case for $%
L_\delta$, proving the power-boundedness on $L^1$ (with $C_1 = 1$).

Then, using Lemma \ref{Lemsu}, we know that there is a $B_1 \geq 0$ such
that 
\begin{equation*}
\|L_\delta f\|_{W^{1,1}} \leq \|L_T f\|_{W^{1,1}} \leq
\alpha\|f\|_{W^{1,1}}+ B_1 \|f\|_1.
\end{equation*}
Applying this inequality to $L_\delta^2 f = L_\delta(L_\delta f)$ gives us 
\begin{align*}
\|L_\delta^2 f\|_{W^{1,1}} &\leq \alpha\|L_\delta f\|_{W^{1,1}} + B_1
\|L_\delta f\|_{1} \\
&\leq \alpha^2 \|f\|_{W^{1,1}} + \alpha B_1 \|f\|_{1} + B_1 \|L_\delta
f\|_{1}.
\end{align*}
We can then iterate: 
\begin{align*}
\|L_\delta^n f\|_{W^{1,1}} &\leq \alpha^n \|f\|_{W^{1,1}} + B_1
\sum_{i=0}^{n-1} \alpha^i \|L_\delta^{n-1-i} f\|_{1} \\
&\leq \alpha^n \|f\|_{W^{1,1}} + \frac{B_1 C_1}{1-\alpha} \|f\|_{1}
\end{align*}
giving us the property for $k=1$, with $C_1 = 1$ and $D_1 = \frac{B_1 C_1}{%
1-\alpha}$.

The induction is then analogous to the base case. Using the induction
hypothesis on $k-1$, more precisely the Lasota-Yorke inequality, we have the
power-boundedness of $L_\delta$ on $W^{k-1,1}$: 
\begin{equation*}
\|L^n_\delta f\|_{W^{k-1,1}} \leq (1 + D_{k-1})\|f\|_{W^{k-1,1}}.
\end{equation*}
Hence $C_k = 1+D_{k-1}$. We can then use again Lemma \ref{Lemsu} to have the
first inequality 
\begin{equation*}
\|L_\delta f\|_{W^{k,1}} \leq \alpha\|f\|_{W^{k,1}}+ B_k \|f\|_{W^{k-1}},
\end{equation*}
which we can iterate to 
\begin{equation*}
\|L_\delta^n f\|_{W^{k,1}} \leq \alpha^n \|f\|_{W^{k,1}} + B_k
\sum_{i=0}^{n-1} \alpha^i \|L_\delta^{n-1-i} f\|_{W^{k-1,1}}.
\end{equation*}
We can finally use the power-boundedness result we just proved to conclude: 
\begin{equation*}
\|L_\delta^n f\|_{W^{k,1}} \leq \alpha^n \|f\|_{W^{k,1}} + \frac{B_kC_k}{%
1-\alpha} \|f\|_{W^{k-1,1}}.
\end{equation*}
Hence the result for $k$, with $C_k = (1+D_{k-1})$ and $D_k =\frac{B_k C_k}{%
1-\alpha}$.
\end{proof}

We can then extend this inequality to our spaces, $W^{2k+1,1}(\mathbb{S}^1)$.

\begin{corollary}
For $k \geq 2$, the family $(L_\delta)$ verifies a uniform Lasota-Yorke
inequality on $W^{k,1}(\mathbb{S}^1) \subset W^{k-2,1}(\mathbb{S}^1)$: there
is $\alpha <1, E_k, F_k \geq 0$ such that 
\begin{equation*}
\|L^n_\delta f\|_{W^{k,1}} \leq \alpha^{\frac{k-1}{2}n}E_k\|f\|_{W^{k,1}} +
F_k\|f\|_{W^{k-2,1}}.
\end{equation*}
\end{corollary}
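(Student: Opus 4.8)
The plan is to combine the consecutive-level uniform Lasota--Yorke inequality already established in Lemma~\ref{lem:LYNoise} with a single interpolation step between the Sobolev norms. The first ingredient I would record is a Landau--Kolmogorov type estimate on the circle: for each integer $k\geq 2$ there is a constant $C=C_k\geq 0$ so that for every $\theta\in(0,1]$ and every $f\in W^{k,1}(\mathbb{S}^1)$,
\[
\|f\|_{W^{k-1,1}}\leq C\theta\,\|f\|_{W^{k,1}}+\frac{C}{\theta}\,\|f\|_{W^{k-2,1}}.
\]
This follows by applying the classical inequality $\|g'\|_{L^1}\leq K\|g\|_{L^1}^{1/2}\|g''\|_{L^1}^{1/2}$ for periodic functions to $g=f^{(k-2)}$, then splitting the product with the weighted arithmetic--geometric mean inequality, and finally adding back the lower-order terms of $\|\cdot\|_{W^{k-1,1}}$, which for $\theta\leq 1$ are absorbed into $\frac{C}{\theta}\|f\|_{W^{k-2,1}}$.

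With this at hand, fix $n$ and split the iterate as $L_\delta^{n}=L_\delta^{q}\circ L_\delta^{p}$ with $p=\lfloor n/2\rfloor$ and $q=\lceil n/2\rceil$. Applying the level-$k$ inequality of Lemma~\ref{lem:LYNoise} to $L_\delta^{q}$ acting on $L_\delta^{p}f$, and then the level-$k$ and level-$(k-1)$ inequalities of the same lemma to $L_\delta^{p}f$, I would obtain
\[
\|L_\delta^{n}f\|_{W^{k,1}}\leq \alpha^{kn}\|f\|_{W^{k,1}}+D_k\bigl(\alpha^{kq}+\alpha^{(k-1)p}\bigr)\|f\|_{W^{k-1,1}}+D_kD_{k-1}\|f\|_{W^{k-2,1}}.
\]
The purpose of splitting is exactly that the coefficient in front of $\|f\|_{W^{k-1,1}}$ is now exponentially small: since $q\geq p\geq (n-1)/2$, one has $\alpha^{kq}+\alpha^{(k-1)p}\leq 2\alpha^{(k-1)p}\leq 2\alpha^{-(k-1)/2}\,\alpha^{(k-1)n/2}$.

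I would then apply the interpolation inequality above to the middle term with the \emph{decaying} parameter $\theta=\theta_n:=\alpha^{(k-1)n/2}\in(0,1]$. The $\|f\|_{W^{k-2,1}}$ contribution it produces carries the factor $\theta_n\cdot\theta_n^{-1}$ and hence stays bounded, while the $\|f\|_{W^{k,1}}$ contribution carries $\theta_n\cdot\theta_n=\alpha^{(k-1)n}$. Collecting terms,
\[
\|L_\delta^{n}f\|_{W^{k,1}}\leq \bigl(\alpha^{kn}+2CD_k\alpha^{-(k-1)/2}\alpha^{(k-1)n}\bigr)\|f\|_{W^{k,1}}+\bigl(2CD_k\alpha^{-(k-1)/2}+D_kD_{k-1}\bigr)\|f\|_{W^{k-2,1}},
\]
and since $\alpha<1$ and $k\geq 2$ give $\alpha^{kn}\leq\alpha^{(k-1)n/2}$ and $\alpha^{(k-1)n}\leq\alpha^{(k-1)n/2}$, the bracket is at most $E_k\,\alpha^{(k-1)n/2}$ with $E_k:=1+2CD_k\alpha^{-(k-1)/2}$. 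Taking $F_k:=2CD_k\alpha^{-(k-1)/2}+D_kD_{k-1}$ yields the claimed inequality.

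The only genuinely delicate point is why one must split $L_\delta^{n}$ before interpolating. If one applied the interpolation inequality directly to the bound of Lemma~\ref{lem:LYNoise}, the fixed (non-decaying) coefficient $D_k$ in front of $\|f\|_{W^{k-1,1}}$ would force a fixed choice of $\theta$ in order to keep the $\|f\|_{W^{k-2,1}}$ term bounded, and then the $\|f\|_{W^{k,1}}$ coefficient would not decay in $n$, so Theorem~\ref{gap} could not be applied. Splitting into two halves of length $\sim n/2$ first manufactures an $\alpha^{(k-1)n/2}$-small coefficient on $\|f\|_{W^{k-1,1}}$, and interpolating precisely at the matching scale $\theta_n\sim\alpha^{(k-1)n/2}$ balances the two contributions; everything else is routine bookkeeping with the constants $\alpha, C_k, D_k$ supplied by Lemmas~\ref{Lemsu} and~\ref{lem:LYNoise}.
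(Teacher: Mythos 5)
Your proof is correct, and its core is the same as the paper's: split the iterate as $L_\delta^{n}=L_\delta^{q}\circ L_\delta^{p}$ with $p\approx q\approx n/2$ and apply Lemma~\ref{lem:LYNoise} at levels $k$ and $k-1$ so that the coefficient of $\|f\|_{W^{k-1,1}}$ comes out of order $\alpha^{(k-1)n/2}$. Where you diverge is the last step: the paper simply observes that $\|f\|_{W^{k-1,1}}\leq\|f\|_{W^{k,1}}$ (the Sobolev norm being a sum of $L^1$ norms of derivatives, dropping the top one only decreases it) and absorbs the middle term directly into the strong norm with its already-decaying coefficient. Your Landau--Kolmogorov interpolation at scale $\theta_n=\alpha^{(k-1)n/2}$ is therefore superfluous machinery: once the splitting has manufactured the exponentially small prefactor, no balancing of scales is needed, and the "delicate point" you describe is resolved by the splitting alone, not by the choice of $\theta_n$. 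The interpolation inequality you invoke is true on $\mathbb{S}^1$ and your bookkeeping with $E_k$, $F_k$ is consistent, so nothing is wrong -- you have just proved a trivial step by a nontrivial route.
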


\begin{proof}
By using Lemma \ref{lem:LYNoise}, we have that $\forall n,p \geq 0$, 
\begin{align*}
\|L^{n+p}_\delta f\|_{W^{k,1}} &\leq \alpha^{kn}\|L^p_\delta f\|_{W^{k,1}} +
D_k \|L^p_\delta f\|_{W^{k-1,1}} \\
&\leq \alpha^{kn}C_{k+1}\|f\|_{W^{k,1}} + D_k\left(
\alpha^{(k-1)p}\|f\|_{W^{k-1,1}} + D_{k-1}\|f\|_{W^{k-2,1}}\right) \\
&\leq \left(\alpha^{kn}C_{k+1} + \alpha^{(k-1)p}D_k\right)\|f\|_{W^{k,1}} +
D_k D_{k-1}\|f\|_{W^{k-2,1}}.
\end{align*}
So in the case $p=n$ (an even exponent), we have 
\begin{align*}
\|L^{2n}_\delta f\|_{W^{k,1}} &\leq \alpha^{(k-1)n}\left(\alpha^{n}C_{k+1} +
D_k\right)\|f\|_{W^{k,1}} + D_k B_{k-1}\|f\|_{W^{k-2,1}} \\
&\leq \alpha^{\frac{k-1}{2}2n}\left(C_{k+1} + D_k\right)\|f\|_{W^{k,1}} +
D_k D_{k-1}\|f\|_{W^{k-2,1}}.
\end{align*}
And in the case $p=n+1$ (an odd exponent), we have 
\begin{align*}
\|L^{2n+1}_\delta f\|_{W^{k,1}} &\leq \alpha^{\frac{k-1}{2}%
(2n+1)}\left(\alpha^{n-\frac{k-1}{2}}C_{k+1} + \alpha^{\frac{k-1}{2}%
}D_k\right)\|f\|_{W^{k,1}} + D_k D_{k-1}\|f\|_{W^{k-2,1}} \\
&\leq \alpha^{\frac{k-1}{2}(2n+1)}\left(\alpha^{-\frac{k-1}{2}}C_{k+1} +
D_k\right)\|f\|_{W^{k,1}} + D_k D_{k-1}\|f\|_{W^{k-2,1}}.
\end{align*}
Taking the maximum of the two constants that differ finishes the proof:
because $\alpha <1$, we can take $E_k = \left(\alpha^{-\frac{k-1}{2}%
}C_{k+1}+D_k\right)$ and $F_k = D_k D_{k-1}$.
\end{proof}

Using the compact embedding of $W^{7,1}(\mathbb{S}^1)$ into $W^{5,1}(\mathbb{%
S}^1)$ (by Rellich-Kondrachov embedding theorem, see \cite{ADA}) and the
Lasota-Yorke inequality we just proved, one can easily deduce assumption LR1
(an example of such reasoning can be found in \cite{notes}).

\subsubsection{First derivative operator (verifying LR4 and QR1)\label%
{LR4QR1}}

In this subsection we prove LR4 and QR1. These assumptions concern the first
derivative operator. We will first prove that this first derivative operator
is zero.

\begin{lemma}
Let $(\rho_{\delta })_{\delta }$ be the family of random kernels defined at $%
($\ref{eq:kerdef}$)$. There exists $C\geq 0$ such that for all $f\in W^{2,1}(%
\mathbb{S}^1)$, the following inequality holds 
\begin{equation*}
\left\|\frac{\rho_\delta - \delta_0}{\delta}*f \right\|_{L^1} \leq \delta
\|f\|_{W^{2,1}} C.
\end{equation*}
\end{lemma}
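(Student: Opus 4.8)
The plan is to start from the pointwise identity \eqref{eq:kernel}, which for $f\in L^1(\mathbb{S}^1)$ gives, for a.e. $x$,
\begin{equation*}
\left(\frac{\rho_\delta-\delta_0}{\delta}*f\right)(x)=\frac{1}{\delta}\int_{-1}^1\rho(z)\bigl(f(x-\delta z)-f(x)\bigr)\,dz .
\end{equation*}
Since $f\in W^{2,1}(\mathbb{S}^1)$ and we are in dimension one, $f\in C^1(\mathbb{S}^1)$ and $f'$ is absolutely continuous with $f''\in L^1$; hence Taylor's formula with integral remainder applies to $f'$, yielding for every $x$ and $z$
\begin{equation*}
f(x-\delta z)-f(x)=-\delta z\, f'(x)+\int_0^{-\delta z}(-\delta z-t)\,f''(x+t)\,dt .
\end{equation*}
The first key observation is that, after integrating against $\rho$, the linear term disappears because $\rho$ has vanishing first moment, $\int_{-1}^1\rho(z)z\,dz=0$. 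What survives is the quadratic remainder, and the task reduces to estimating its $L^1$ norm.

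Next I would rescale the inner integral. Substituting $t=-\delta z\,u$ with $u\in[0,1]$ turns $\int_0^{-\delta z}(-\delta z-t)f''(x+t)\,dt$ into $\delta^2 z^2\int_0^1(1-u)\,f''(x-\delta z u)\,du$, so that
\begin{equation*}
\left(\frac{\rho_\delta-\delta_0}{\delta}*f\right)(x)=\delta\int_{-1}^1\rho(z)\,z^2\int_0^1(1-u)\,f''(x-\delta z u)\,du\,dz .
\end{equation*}
Taking $L^1$ norms in $x$, using Fubini (legitimate since $\rho\in L^1$ and $f''\in L^1$) and the fact that translation is an isometry of $L^1(\mathbb{S}^1)$, so that $\|f''(\cdot-\delta z u)\|_{L^1}=\|f''\|_{L^1}\le\|f\|_{W^{2,1}}$, I get
\begin{equation*}
\left\|\frac{\rho_\delta-\delta_0}{\delta}*f\right\|_{L^1}\le\delta\,\|f\|_{W^{2,1}}\left(\int_0^1(1-u)\,du\right)\int_{-1}^1|\rho(z)|\,z^2\,dz=\delta\,\|f\|_{W^{2,1}}\cdot\frac{\sigma^2(\rho)}{2}.
\end{equation*}
Since $\rho\ge 0$ is supported in $[-1,1]$ we have $\sigma^2(\rho)=\int_{-1}^1 z^2\rho(z)\,dz\le 1$, so the claim holds with $C=\sigma^2(\rho)/2\le 1/2$.

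There is no serious obstacle here; the only points requiring a little care are (i) justifying the Taylor expansion with integral remainder for a function that is only $W^{2,1}$ — this is handled by the one–dimensional Sobolev embedding $W^{2,1}(\mathbb{S}^1)\hookrightarrow C^1(\mathbb{S}^1)$ together with absolute continuity of $f'$ — and (ii) bookkeeping of the rescaling constant in the remainder integral and the application of Fubini. The structural input that makes the bound of order $\delta^2$ (hence order $\delta$ after dividing by $\delta$) is precisely the vanishing first moment of $\rho$; this is what will ultimately force the first derivative operator $\dot L$ to be zero, as announced before the lemma.
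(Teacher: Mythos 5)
Your proposal is correct and follows essentially the same route as the paper's proof: the Taylor expansion with integral remainder, cancellation of the first-order term via the vanishing first moment of $\rho$, a rescaling of the remainder integral, and Fubini plus translation invariance of the $L^1$ norm. Your explicit constant $C=\sigma^2(\rho)/2$ agrees with the paper's $\int_{-1}^1\int_{[0,z]}\rho(z)|z-y|\,dy\,dz$, and your remark on justifying the expansion for $W^{2,1}$ functions is a welcome (if minor) addition.
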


\begin{proof}
Let us use the following Taylor expansion for $f\in W^{2,1}(\mathbb{S}^1)$: 
\begin{equation}  \label{eq:taylor1}
f(x-\delta z) = f(x) - \delta z f^{\prime }(x) - \int_{x-\delta z}^x
(x-\delta z - t)f^{\prime \prime }(t)dt.
\end{equation}
Using (\ref{eq:kernel}), we have 
\begin{equation*}
((\rho_\delta - \delta_0)*f)(x) = -\int_{-1}^1\int_{x-\delta z}^x
\rho(z)(x-\delta z-t)f^{\prime \prime }(t)dtdz
\end{equation*}
By using the substitution $y = \frac{x-t}{\delta}$ in the last integral, we
can re-write the result as 
\begin{equation*}
((\rho_\delta - \delta_0)*f)(x) = \delta^2 \int_{-1}^1\int_z^0
\rho(z)(y-z)f^{\prime \prime }(x-\delta y)dydz
\end{equation*}
In particular, 
\begin{align*}
\left\|\frac{\rho_\delta - \delta_0}{\delta}*f \right\|_{L^1} &\leq \delta
\int_{\mathbb{S}^1} \int_{-1}^1\int_{[0,z]} \rho(z)|z-y||f^{\prime
\prime}(x-\delta y)| dydzdx \\
&= \delta \int_{-1}^1\int_{[0,z]} \rho(z)|z-y| \left( \int_{\mathbb{S}^1}
|f^{\prime \prime }(x-\delta y)|dx \right) dydz \\
\left\|\frac{\rho_\delta - \delta_0}{\delta}*f \right\|_{L^1} &\leq \delta
\|f\|_{W^{2,1}} \underbrace{\int_{-1}^1\int_{[0,z]} \rho(z)|z-y| dydz}_{= C}
\end{align*}
\end{proof}

We use this lemma to prove QR1 with a zero first derivative operator.

\begin{proposition}
\label{1der} Let $(L_{\delta })_{\delta }$ be the family of operators
defined at $($\ref{opdef}$)$. The following limit, defining the first
derivative operator holds 
\begin{equation*}
\lim_{\delta \rightarrow 0}\left\Vert \frac{L_{\delta }-L_{0}}{\delta }%
\right\Vert _{W^{3,1}\rightarrow W^{1,1}}=0.
\end{equation*}
\end{proposition}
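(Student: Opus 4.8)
The plan is to deduce this directly from the preceding lemma, which bounds $\|\frac{\rho_\delta-\delta_0}{\delta}*f\|_{L^1}$ by $C\delta\|f\|_{W^{2,1}}$, combined with the commutation of convolution with differentiation that was already used in Lemma \ref{lem:contraction}. The starting observation is that, by \eqref{opdef},
\[
\frac{L_\delta - L_0}{\delta}f = \frac{(\rho_\delta-\delta_0)*L_T f}{\delta},
\]
so it suffices to estimate the right-hand side in the $W^{1,1}$ norm in terms of $\|f\|_{W^{3,1}}$.

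The key step is that, for $i\in\{0,1\}$, one has $\big(\frac{(\rho_\delta-\delta_0)*L_Tf}{\delta}\big)^{(i)} = \frac{(\rho_\delta-\delta_0)*(L_Tf)^{(i)}}{\delta}$, exactly as in the proof of Lemma \ref{lem:contraction}, because convolution commutes with the derivative. Applying the preceding lemma to $g:=(L_Tf)^{(i)}$ — which lies in $W^{2,1}$ whenever $L_Tf\in W^{3,1}$ — gives
\[
\left\|\left(\frac{(\rho_\delta-\delta_0)*L_Tf}{\delta}\right)^{(i)}\right\|_{L^1} \leq C\delta\,\|(L_Tf)^{(i)}\|_{W^{2,1}} \leq C\delta\,\|L_Tf\|_{W^{3,1}}.
\]
Summing over $i=0,1$ yields $\big\|\frac{L_\delta-L_0}{\delta}f\big\|_{W^{1,1}}\leq 2C\delta\,\|L_Tf\|_{W^{3,1}}$. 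Finally I would invoke the boundedness of $L_T$ on $W^{3,1}$ (which follows from Lemma \ref{Lemsu}, or from Lemma \ref{lem:contraction} with $\delta=0$ together with the $W^{3,1}$ Lasota–Yorke bound) to replace $\|L_Tf\|_{W^{3,1}}$ by a constant multiple of $\|f\|_{W^{3,1}}$, giving
\[
\left\|\frac{L_\delta-L_0}{\delta}\right\|_{W^{3,1}\to W^{1,1}} \leq C'\delta \xrightarrow[\delta\to0]{} 0.
\]

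There is essentially no serious obstacle here: the work was done in the preceding lemma, and the only things to be careful about are the bookkeeping of which Sobolev index is needed (we lose one derivative to the $W^{2,1}$ hypothesis of the lemma applied to a first derivative of $L_Tf$, hence the source space $W^{3,1}$ and target $W^{1,1}$) and the fact that $L_T$ — not $L_\delta$ — maps $W^{3,1}$ boundedly into itself with a constant independent of $\delta$, which is immediate since $L_T=L_0$ is fixed. One should also note that this proposition gives \emph{more} than what is literally needed for assumptions LR4 and QR1 as stated for the $B_w$/$B_{ww}$ pair: it shows the first derivative operator $\dot L$ is identically zero, so the limits in \eqref{derivativeoperator} and \eqref{eq:unifcvderivop} hold trivially with $\dot L h_0 = 0$, and the operator-norm smallness bound in LR4 is exactly the displayed estimate (with analogous reasoning on the $W^{5,1}\to W^{3,1}$ pair for the second half of LR4).
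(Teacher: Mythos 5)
Your proposal is correct and follows essentially the same route as the paper's own proof: write $\frac{L_\delta-L_0}{\delta}f=\frac{(\rho_\delta-\delta_0)*L_Tf}{\delta}$, split the $W^{1,1}$ norm into the $L^1$ norms of the function and its derivative, apply the preceding lemma to $L_Tf$ and $(L_Tf)'$ (using that convolution commutes with differentiation), and conclude with the boundedness of $L_T$ on $W^{3,1}$. Nothing further is needed.
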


\begin{proof}
Let us consider $f\in W^{3,1}(\mathbb{S}^{1}),$ we get 
\begin{align*}
\left\Vert \frac{L_{\delta }-L_{0}}{\delta }f\right\Vert _{W^{1,1}}&
=\left\Vert \frac{\rho _{\delta }-\delta _{0}}{\delta }\ast
(L_{T}f)\right\Vert _{W^{1,1}} \\
& =\left\Vert \frac{\rho _{\delta }-\delta _{0}}{\delta }\ast
(L_{T}f)\right\Vert _{L^{1}}+\left\Vert \frac{\rho _{\delta }-\delta _{0}}{%
\delta }\ast (L_{T}f)^{\prime }\right\Vert _{L^{1}} \\
& \leq \delta C \left( \Vert L_{T}f \Vert_{W^{2,1}} + \Vert (L_{T}f)^{\prime
} \Vert_{W^{2,1}} \right) \\
& \leq 2\delta C\Vert L_{T}f\Vert _{W^{3,1}} \\
& \leq 2\delta C\Vert L_{T}\Vert _{W^{3,1}\rightarrow W^{3,1}}\Vert
f\Vert_{W^{3,1}}.
\end{align*}%
The operator norm is then bounded by $2\delta C \Vert L_T\Vert$, which tends
to 0 when $\delta$ does.
\end{proof}

To finish verifying assumption LR4: we can remark that $\forall k\geq 0$, 
\begin{align*}
\Vert (\rho _{\delta }-\delta _{0})\ast f\Vert _{W^{k,1}}
&=\sum_{i=0}^{k}\Vert \left( (\rho _{\delta }-\delta _{0})\ast
f\right)^{(i)}\Vert _{L^{1}} \\
& \leq \delta ^{2}C\sum_{i=0}^{k}\Vert f^{(i)}\Vert _{W^{2,1}} \\
& \leq \delta ^{2}C(k+1)\Vert f\Vert _{W^{k+2,1}} \\
\Vert (\rho _{\delta }-\delta _{0})\ast f\Vert _{W^{k,1}} & \leq \delta C
(k+1) \Vert f\Vert _{W^{k+2,1}}.
\end{align*}%
So $\forall k\geq 0$, 
\begin{equation*}
\Vert (L_{\delta }-L_{0})f\Vert _{W^{k,1}}\leq \delta C (k+1) A_{k+2}\Vert
f\Vert _{W^{k+2,1}}
\end{equation*}%
with $A_{k+2} $ the constant from Lemma \ref{Lemsu}. We then have LR4, with
the result for $k\in \{3,5\}$: 
\begin{equation*}
\Vert L_{\delta }-L_{0}\Vert _{W^{k+2,1}\rightarrow W^{k,1}}\leq \delta C
(k+1) A_{k+2}.
\end{equation*}

\subsubsection{Second derivative operator (verifying QR2)\label{QR2}}

In this subsection we prove assumption QR2, computing the second derivative
operator and showing its relation with the variance of $\rho $.

\begin{lemma}
Let $(\rho_\delta)_\delta$ be the family of random kernels described in \ref%
{eq:kerdef}. Then there exists $C >0$ such that for all $f \in W^{3,1}(%
\mathbb{S}^1)$, the following inequality holds 
\begin{equation*}
\left\|\frac{\rho_\delta - \delta_0}{\delta^2}*f - \frac{\sigma^2(\rho)}{2}
f^{\prime \prime }\right\|_{L^1} \leq \delta C \|f\|_{W^{3,1}}.
\end{equation*}
\end{lemma}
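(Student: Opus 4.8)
The plan is to mimic the proof of the previous lemma (the one giving the $\delta\|f\|_{W^{2,1}}$ bound for $\frac{\rho_\delta-\delta_0}{\delta}\ast f$), but to push the Taylor expansion of $f$ one order further. Concretely, for $f\in W^{3,1}(\mathbb{S}^1)$ I would write
\begin{equation*}
f(x-\delta z)=f(x)-\delta z f'(x)+\frac{\delta^2 z^2}{2}f''(x)-\int_{x-\delta z}^{x}\frac{(x-\delta z-t)^2}{2}f'''(t)\,dt,
\end{equation*}
the integral form of Taylor's remainder to third order. Multiplying by $\rho(z)$ and integrating over $z\in[-1,1]$, using \eqref{eq:kernel} and the normalization identities $\int\rho=1$, $\int z\rho(z)\,dz=0$, $\int z^2\rho(z)\,dz=\sigma^2(\rho)$, the zeroth and first order terms cancel against $f(x)$ and the vanishing first moment, the second order term produces exactly $\frac{\delta^2\sigma^2(\rho)}{2}f''(x)$, and what is left is the remainder term
\begin{equation*}
\bigl((\rho_\delta-\delta_0)\ast f\bigr)(x)-\frac{\delta^2\sigma^2(\rho)}{2}f''(x)=-\int_{-1}^{1}\int_{x-\delta z}^{x}\rho(z)\frac{(x-\delta z-t)^2}{2}f'''(t)\,dt\,dz.
\end{equation*}

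Next I would estimate the $L^1$ norm of this remainder. Performing the substitution $y=\frac{x-t}{\delta}$ in the inner integral (exactly as in the previous lemma) turns it into $\delta^3\int_{-1}^{1}\int_{[0,z]}\rho(z)\frac{(y-z)^2}{2}f'''(x-\delta y)\,dy\,dz$ up to the sign and orientation bookkeeping of the limits of integration. Dividing by $\delta^2$, taking absolute values, integrating over $x\in\mathbb{S}^1$, and applying Tonelli to swap the $x$-integral inside, the $\int_{\mathbb{S}^1}|f'''(x-\delta y)|\,dx=\|f'''\|_{L^1}\le\|f\|_{W^{3,1}}$ factors out and one is left with $\delta\|f\|_{W^{3,1}}$ times the finite constant $C:=\int_{-1}^{1}\int_{[0,z]}\rho(z)\frac{(z-y)^2}{2}\,dy\,dz$, which is finite since $\rho\in BV([-1,1])\subset L^1$ and the integrand is bounded on the compact domain. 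This yields exactly the claimed inequality.

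The only genuinely delicate point — and the step I would be most careful about — is the orientation and sign bookkeeping in the change of variables, since when $z<0$ the "interval" $[x-\delta z,x]$ is traversed in the opposite direction and $[0,z]$ must be read as $[z,0]$; this is precisely why the previous lemma's statement is written with $\int_{[0,z]}$ and with an absolute value $|z-y|$ absorbing the sign. I would handle it by treating $z>0$ and $z<0$ separately, or simply by carrying everything under absolute values from the moment the remainder appears, which is harmless for an $L^1$ upper bound. Everything else is a routine application of Taylor's theorem, Tonelli's theorem, and the moment normalizations already recorded in the excerpt; no new functional-analytic input is needed beyond $\rho\in BV\subset L^1$.
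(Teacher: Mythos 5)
Your proposal is correct and follows essentially the same route as the paper's proof: the third-order Taylor expansion with integral remainder, cancellation via the moment identities, the substitution $y=\frac{x-t}{\delta}$, and a Tonelli/Fubini estimate of the remainder (the paper merely packages the sign bookkeeping for $z\gtrless 0$ into an explicit auxiliary function $\Omega(y)$, which is the same splitting you describe). No further comment is needed.
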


\begin{proof}
We can extend for $f\in W^{3,1}(\mathbb{S}^1)$ the Taylor expansion (\ref%
{eq:taylor1}): 
\begin{equation}  \label{eq:taylor2}
f(x-\delta z)=f(x)-\delta zf^{\prime }(x)+\frac{\delta ^{2}z^{2}}{2}%
f^{\prime \prime }(x)-\int_{x-\delta z}^{x}\frac{(x-\delta z-t)^{2}}{2}%
f^{(3)}(t)dt.
\end{equation}%
We then have 
\begin{equation*}
((\rho _{\delta }-\delta _{0})\ast f)(x)=\delta ^{2}\frac{f^{\prime \prime
}(x)}{2}\sigma^2(\rho )-\int_{-1}^{1}\int_{x-\delta z}^{x}\rho (z)\frac{%
(x-\delta z-t)^{2}}{2}f^{(3)}(t)dtdz.
\end{equation*}

By using the substitution $y = \frac{x-t}{\delta}$ in the last integral, we
can re-write the result as 
\begin{equation*}
((\rho_\delta - \delta_0)*f)(x) = \delta^2 \frac{f^{\prime \prime }(x)}{2}%
\sigma^2(\rho) + \frac{\delta^3}{2}\int_{-1}^1\int_z^0
\rho(z)(z-y)^2f^{(3)}(x-\delta y)dydz
\end{equation*}
i.e. 
\begin{equation*}
\left(\frac{\rho_\delta - \delta_0}{\delta^2}*f\right)(x) = \frac{f^{\prime
\prime }(x)}{2}\sigma^2(\rho) + \frac{\delta}{2}\int_{-1}^1\int_z^0
\rho(z)(z-y)^2f^{(3)}(x-\delta y)dydz
\end{equation*}

Once again we can use Fubini theorem to exchange the last integrals: 
\begin{equation*}
\int_{-1}^1\int_z^0\rho(z)(z-y)^2f^{(3)}(x-\delta y)dydz = R_1(x,\delta) +
R_2(x,\delta)
\end{equation*}
with 
\begin{align*}
R_1(x,\delta) &= \int_{-1}^0\int_z^0\rho(z)(z-y)^2f^{(3)}(x-\delta y)dydz \\
&=\int_{-1}^0 f^{(3)}(x-\delta y) \left( \int_{-1}^y\rho(z)(z-y)^2 dz
\right)dy
\end{align*}
and 
\begin{align*}
R_2(x,\delta) &= \int_0^1\int_z^0\rho(z)(z-y)^2f^{(3)}(x-\delta y)dydz \\
&=\int_0^1 f^{(3)}(x-\delta y) \left(\int_y^1\rho(z)(z-y)^2 dz \right)dy.
\end{align*}
So 
\begin{equation*}
\left(\frac{\rho_\delta - \delta_0}{\delta^2}*f\right)(x) = \frac{f^{\prime
\prime }(x)}{2}\sigma^2(\rho) + \frac{\delta}{2}\int_{-1}^1 f^{(3)}(x-\delta
y) \Omega(y)dy
\end{equation*}
with 
\begin{equation*}
\Omega(y) = \left\{ 
\begin{array}{ll}
\int_y^1 -\rho(z)(z-y)^2dz & \text{if } y\geq 0 \\ 
\int_{-1}^y \rho(z)(z-y)^2dz & \text{if } y < 0%
\end{array}
\right. .
\end{equation*}

We can then conclude that 
\begin{align*}
\left\|\frac{\rho_\delta - \delta_0}{\delta^2}*f - \frac{\sigma^2(\rho)}{2}
f^{\prime \prime }\right\|_{L^1} &\leq \frac{\delta}{2} \int_{\mathbb{S}%
^1}\int_{-1}^1 \left|f^{(3)}(x-\delta y) \Omega(y) \right|dy dx \\
&\leq \frac{\delta}{2} \int_{-1}^1 |\Omega(y)| dy \times \|f\|_{W^{3,1}}.
\end{align*}
\end{proof}

As in Subsection \ref{LR4QR1}, we can apply this lemma to our problem,
obtaining the following.

\begin{proposition}
Suppose $T$ is a $C^{5}$ expanding map on the circle $\mathbb{S}^{1}$. Let $%
h_{0}\in \mathbb{S}^{1}$, its invariant probability density and let $%
L_{\delta }$ be the family of operators defined in $($\ref*{opdef}$)$ then
the following holds 
\begin{equation*}
\left\Vert \frac{(L_{\delta }-L_{0})h_{0}}{\delta ^{2}}-\frac{\sigma^2(\rho )%
}{2}h_{0}^{\prime \prime }\right\Vert _{W^{1,1}}\underset{\delta \rightarrow
0}{\longrightarrow }0.
\end{equation*}
\end{proposition}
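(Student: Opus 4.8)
The plan is to deduce this proposition from the preceding lemma (the $L^1$-estimate for $\frac{\rho_\delta-\delta_0}{\delta^2}\ast f-\frac{\sigma^2(\rho)}{2}f''$) exactly as the corresponding statement in Subsection~\ref{LR4QR1} was deduced from its lemma: by applying it not only to a function $f$ but to its derivative, and then using the regularization (boundedness) of $L_T$ on Sobolev spaces from Lemma~\ref{Lemsu}. First I would write $(L_\delta-L_0)h_0=(\rho_\delta-\delta_0)\ast(L_T h_0)$ and expand the $W^{1,1}$ norm as $\|(\rho_\delta-\delta_0)\ast(L_T h_0)\|_{L^1}+\|(\rho_\delta-\delta_0)\ast(L_T h_0)'\|_{L^1}$, using that convolution commutes with differentiation (as already noted in the proof of Lemma~\ref{lem:contraction}). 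Since $L_T h_0=h_0$ (as $h_0$ is the invariant density), in fact $(L_\delta-L_0)h_0=(\rho_\delta-\delta_0)\ast h_0$, so it suffices to control $\|\frac{\rho_\delta-\delta_0}{\delta^2}\ast h_0-\frac{\sigma^2(\rho)}{2}h_0''\|_{W^{1,1}}$ directly.

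The key step is then: apply the lemma once to $h_0$ itself and once to $h_0'$, obtaining
\begin{equation*}
\left\|\frac{\rho_\delta-\delta_0}{\delta^2}\ast h_0-\frac{\sigma^2(\rho)}{2}h_0''\right\|_{L^1}\leq \delta C\|h_0\|_{W^{3,1}},\qquad \left\|\frac{\rho_\delta-\delta_0}{\delta^2}\ast h_0'-\frac{\sigma^2(\rho)}{2}h_0'''\right\|_{L^1}\leq \delta C\|h_0'\|_{W^{3,1}}.
\end{equation*}
Because $\left(\frac{\rho_\delta-\delta_0}{\delta^2}\ast h_0\right)'=\frac{\rho_\delta-\delta_0}{\delta^2}\ast h_0'$ and $\left(\frac{\sigma^2(\rho)}{2}h_0''\right)'=\frac{\sigma^2(\rho)}{2}h_0'''$, the second inequality controls exactly the $L^1$ norm of the derivative of the quantity whose $L^1$ norm the first inequality controls. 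Adding the two and bounding $\|h_0\|_{W^{3,1}}+\|h_0'\|_{W^{3,1}}\leq 2\|h_0\|_{W^{4,1}}$ gives
\begin{equation*}
\left\|\frac{(L_\delta-L_0)h_0}{\delta^2}-\frac{\sigma^2(\rho)}{2}h_0''\right\|_{W^{1,1}}\leq 2\delta C\|h_0\|_{W^{4,1}},
\end{equation*}
which tends to $0$ as $\delta\to 0$, giving the claim. (If one prefers not to use $L_T h_0=h_0$, one replaces $h_0$ by $L_T h_0$ throughout and invokes $\|L_T h_0\|_{W^{4,1}}\leq\|L_T\|_{W^{4,1}\to W^{4,1}}\|h_0\|_{W^{4,1}}$, using that $T\in C^5$ so that $L_T$ is bounded on $W^{4,1}$ by Lemma~\ref{Lemsu}.)

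The only point requiring care — and the place where the $C^5$ hypothesis and the membership $h_0\in W^{4,1}$ are used — is making sure the relevant Sobolev norms of $h_0$ are finite: for a $C^5$ expanding map the invariant density lies in $W^{4,1}$ by the regularization estimates already established (Proposition~\ref{propora} and Lemma~\ref{Lemsu}), so $\|h_0\|_{W^{4,1}}<\infty$, and the preceding lemma applies to both $h_0$ and $h_0'\in W^{3,1}$. This is not a genuine obstacle, just a bookkeeping check on regularity; the substance of the argument is the differentiation-commutes-with-convolution trick that upgrades the $L^1$ estimate of the lemma to the $W^{1,1}$ estimate required by assumption QR2 (with $\ddot L h_0=\sigma^2(\rho)h_0''$, noting the factor $\tfrac12$ absorbed as in the statement of Theorem~\ref{thm:quadresp}).
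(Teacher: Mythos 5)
Your proposal is correct and follows essentially the same route as the paper's proof: use $L_T h_0=h_0$ and the commutation of convolution with differentiation to reduce the $W^{1,1}$ bound to two applications of the preceding $L^1$ lemma, to $h_0$ and to $h_0'$, with the $C^5$ hypothesis guaranteeing the needed regularity of $h_0$ (the paper cites $h_0\in C^4$, which gives your $\|h_0\|_{W^{4,1}}<\infty$). No gaps.
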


\begin{proof}
Remark that because $h_0$ is the invariant probability measure of $T$ and
the property of derivation of a convolution product, 
\begin{equation*}
\frac{(L_{\delta }-L_{0})h_{0}}{\delta ^{2}} = \frac{\rho_{\delta }-\delta
_{0}}{\delta ^{2}}\ast h_{0} \qquad \text{and} \qquad \left(\frac{(L_{\delta
}-L_{0})h_{0}}{\delta ^{2}}\right)^\prime = \frac{\rho_{\delta }-\delta _{0}%
}{\delta ^{2}}\ast h^\prime_{0}.
\end{equation*}

$T$ being a $C^{5}$ expanding map imply that $h_0$ is $C^4$ (see \cite{notes}%
): we can then apply our lemma to both $h_0$ and $h_0^\prime$, giving us 
\begin{equation*}
\left\Vert \frac{(L_{\delta }-L_{0})h_{0}}{\delta ^{2}}-\frac{\sigma^2(\rho )%
}{2}h_{0}^{\prime \prime }\right\Vert _{L^{1}} \leq \delta C
\|h_0\|_{W^{3,1}}
\end{equation*}
and 
\begin{equation*}
\left\Vert \left(\frac{(L_{\delta }-L_{0})h_{0}}{\delta ^{2}}-\frac{%
\sigma^2(\rho )}{2}h_{0}^{\prime \prime }\right)^\prime\right\Vert _{L^{1}}
\leq \delta C \|h^\prime_0\|_{W^{3,1}}.
\end{equation*}
The result then follows.
\end{proof}

We have then verified the assumption QR2. Since all the assumptions are
verified, we can hence apply Theorem \ref{thm:quadresp} to the family of
perturbed operators $L_{\delta }$, proving Theorem \ref{thm:quadrespsmooth}.

\begin{proof}[Proof of Theorem \protect\ref{thm:quadrespsmooth}]
We apply Theorem \ref{thm:quadresp}, with the spaces $B_i = W^{i,1}(\mathbb{S%
}^1)$, with $i \in \{ss,s,w,ww\} = \{7,5,3,1\}$. We showed that our family
of operator verifies the assumptions of both Theorem \ref{thm:linresp} and %
\ref{thm:quadresp} in the previous subsections: assumptions LR2, LR3 and QR3
in subsection \ref{LR2LR3QR3}, LR1 in subsection \ref{LR1}, LR4 and QR1 in
subsection \ref{LR4QR1}, and QR2 in subsection \ref{QR2}.
\end{proof}

\section{Quantitative zero-noise limit of piecewise expanding maps\label%
{sec:pwexp}}

In this section we prove that for a certain family of piecewise expanding
maps, the invariant densities in the zero-noise limit have a speed of
convergence \ "of order at least about 1", as stated more precisely in
Proposition \ref{prop:linresppiecewise}, confirming the numerical findings
of \cite{Lin}. In this paper, the author shows numerically one example of
piecewise expanding map having a discontinuous invariant density, where the
speed of convergence is of order 1. This is due to the presence of
discontinuities in the map and in the corresponding invariant densities. We
remark that, as shown in the previous section, the exponent can be larger
than 1 for smoother maps. The proof of Proposition \ref%
{prop:linresppiecewise} is composed of three parts: in section \ref{UFO} we
introduce the concept of Uniform Family of Operators and state their link
with the speed of convergence to equilibrium . We then show that the family
of perturbations we consider in the small noise limit is uniform in this
sense. Finally, in section \ref{LB} we show a lower bound on the speed of
convergence, based on the the approximation of a discontinuity by Lipschitz
functions.

\subsection{Upper bounds: Convergence to equilibrium and stability\label{UB}}

In this section we provide the upper bounds sufficient to prove Proposition %
\ref{prop:linresppiecewise}. We start by defining the class of maps we mean to
consider.

\begin{definition}
A map $T:\mathbb{S}^{1}\rightarrow \mathbb{S}^{1}$ is said to be piecewise $%
C^{2}$ if there exists a finite set of points $d_{1}=0<d_{2}<...<d_{n}=1$
such that for each $0\leq i<n$, \ $T_{i}:=T_{(d_{i},d_{i+1})}$ extends to a $%
C^{2}$ function on the closure. Its expanding constant is defined as $%
\lambda _{T}=\inf_{i,x\in \lbrack d_{i},d_{i+1}]}\left\vert T^{\prime
}(x)\right\vert $.

A piecewise $C^2$ map is called piecewise expanding if there is a integer $k
>0$ such that $\lambda_{T^k} >1$, where $T^k$ is the $k^{th}$ iterate of $T$.
\end{definition}

\begin{definition}
A turning point of a map $T$ is a point where the derivative of the map is
not well defined.
\end{definition}

\subsubsection{Uniform Family of Operators, exponential convergence to
equilibrium and quantitative statistical stability\label{UFO}}

In this subsection we present a general quantitative result relating the
stability of the invariant measure of an uniform family of operator and the
speed of convergence to equilibrium.

Let $L$ be a Markov operator acting on two vector subspaces of signed
measures on $\mathbb{S}^{1}$, $L:(B_{s},||~||_{s})\longrightarrow
(B_{s},||~||_{s})$ and $L:(B_{w},||~||_{w})\longrightarrow (B_{w},||~||_{w})$%
, endowed with two norms, $||~||_{s}$ on $B_{s}, $ and $||~||_{w}$ on $B_{w}$%
, such that $||~||_{s}\geq ||~||_{w}$. Suppose that, 
\begin{equation*}
B_{s}\mathcal{\subseteq }B_{w}\mathcal{\subseteq }BS(\mathbb{S}^{1}),
\end{equation*}%
where again $BS(\mathbb{S}^{1})$ denotes the space of Borel finite signed
measures on $\mathbb{S}^{1}.$ Let us consider again the space of zero
average measures%
\begin{equation}
V_{s}=\{f\in B_{s},f(\mathbb{S}^{1})=0\}.  \label{vs}
\end{equation}%
This space is preserved by any Markov opertator.

We say that $L$ has convergence to equilibrium with at least speed $\Phi $
and with respect to the norms $||~||_{s}$ and $||~||_{w}$, if for each $f\in
V_{s}$ it holds 
\begin{equation}
||Lf||_{w}\leq \Phi (n)||f||_{s},  \label{wwe}
\end{equation}%
where $\Phi (n)\longrightarrow 0$ as $n\longrightarrow \infty $.

\begin{definition}
\label{UF} A one parameter family of transfer operators $\{L_{\delta
}\}_{\delta \in \left[ 0,1\right) }$ is said to be an \textbf{uniform family
of operators} with respect to the weak space $(B_{w},||~||_{w})$ and the
strong space $(B_{s},||~||_{s})$ if $||~||_{s}\geq ||~||_{w}$ and it
satisfies

\begin{enumerate}
\item[\textbf{UF1}] Let $h_{\delta }\in B_{s}$ be a probability measure
fixed under the operator $L_{\delta }$. Suppose there is $M>0$ such that for
all $\delta \in \lbrack 0,1)$, it holds 
\begin{equation*}
||h_{\delta }||_{s}\leq M;
\end{equation*}

\item[\textbf{UF2}] $L_{\delta }$ approximates $L_{0}$ when $\delta $ is
small in the following sense: there is $C\in \mathbb{R}^{+}$ such that: 
\begin{equation*}
||(L_{0}-L_{\delta })h_{\delta }||_{w}\leq \delta C;
\end{equation*}

\item[\textbf{UF3}] $L_{0}$ has exponential convergence to equilibrium with
respect to the norms $||~||_{s}$ and $||~||_{w}$: there exists $0<\rho
_{2}<1 $ and $C_{2}>0$ such that 
\begin{equation*}
\forall \ f\in V_{s}:=\{f\in B_{s}:f(X)=0\}
\end{equation*}%
it holds 
\begin{equation*}
||L_{0}^{n}f||_{w}\leq \rho _{2}^{n}C_{2}||f||_{s};
\end{equation*}

\item[\textbf{UF4}] The iterates of the operators are uniformly bounded for
the weak norm: there exists $M_{2}>0$ such that 
\begin{equation*}
\forall \delta ,n,g\in B_{s}\ \mathnormal{it~holds}\ ||L_{\delta
}^{n}g||_{w}\leq M_{2}||g||_{w}.
\end{equation*}
\end{enumerate}
\end{definition}

Under these assumptions we can ensure that the invariant measure of the
system varies continuously (in the weak norm) when $L_{0}$ is perturbed to $%
L_{\delta }$, for small values of $\delta $. Moreover, the modulus of
continuity can be estimated. The following result was indeed proved in \cite%
{GLu}.

\begin{proposition}
\label{dlogd}Suppose $\{\func{L}_{\delta }\}_{\delta \in \left[0, 1 \right)}$
is an uniform family of operators as in Definition \ref{UF}, where $h_{0}$
is the unique fixed point of $\func{L}_{0}$ in $B_{w}$ and $h_{\delta }$ is
a fixed point of $\func{L} _{\delta }$. Then, there exists $\delta _0 \in
(0,1)$ such that for all $\delta \in [0,\delta _0)$, it holds

\begin{equation*}
||h_{\delta }-h_{0}||_{w}=O(\delta \log \delta ).
\end{equation*}
\end{proposition}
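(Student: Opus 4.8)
The plan is to compare the $n$-th iterates of $L_\delta$ and $L_0$ applied to the two fixed points, and then to optimise the number of iterations $n$ as a function of $\delta$. For any $n\in\mathbb{N}$, using $L_\delta h_\delta=h_\delta$ and $L_0 h_0=h_0$, I would write
\[
h_\delta-h_0=L_\delta^{n}h_\delta-L_0^{n}h_0=(L_\delta^{n}-L_0^{n})h_\delta+L_0^{n}(h_\delta-h_0).
\]
The term $L_0^{n}(h_\delta-h_0)$ is handled directly by the convergence to equilibrium: since $h_\delta$ and $h_0$ are probability measures, their difference lies in $V_s$ with $\|h_\delta-h_0\|_s\le 2M$ by UF1, so UF3 gives $\|L_0^{n}(h_\delta-h_0)\|_w\le 2MC_2\,\rho_2^{\,n}$, which is exponentially small in $n$.

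For the term $(L_\delta^{n}-L_0^{n})h_\delta$ I would use the telescoping identity $L_\delta^{n}-L_0^{n}=\sum_{k=0}^{n-1}L_0^{k}(L_\delta-L_0)L_\delta^{\,n-1-k}$ together with the fact that $L_\delta^{\,n-1-k}h_\delta=h_\delta$, so that
\[
(L_\delta^{n}-L_0^{n})h_\delta=\sum_{k=0}^{n-1}L_0^{k}(L_\delta-L_0)h_\delta.
\]
The key point is that $(L_\delta-L_0)h_\delta$ has zero average: both $L_\delta h_\delta$ and $L_0 h_\delta$ are probability measures (since $L_0$ is Markov and $h_\delta$ is a probability measure), hence $(L_\delta-L_0)h_\delta\in V_w$, and UF2 bounds its weak norm by $C\delta$. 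Because this quantity is controlled only in the weak norm, one cannot feed the telescoped pieces into UF3; instead I would apply the crude uniform bound UF4 to each term, obtaining $\|L_0^{k}(L_\delta-L_0)h_\delta\|_w\le M_2 C\delta$ and therefore $\|(L_\delta^{n}-L_0^{n})h_\delta\|_w\le n\,M_2 C\delta$.

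Putting the two estimates together yields, for every $n$,
\[
\|h_\delta-h_0\|_w\le n\,M_2 C\,\delta+2MC_2\,\rho_2^{\,n}.
\]
The final step is to balance the two summands: choosing $n=n(\delta)$ of order $\log(1/\delta)/\log(1/\rho_2)$ — concretely $n(\delta)=\lceil \log\delta/\log\rho_2\rceil$, which is a positive integer once $\delta<\delta_0$ for a suitable $\delta_0\in(0,1)$ — makes $\rho_2^{\,n(\delta)}\le\delta$ while $n(\delta)=O(\log(1/\delta))$, so both terms are $O(\delta\log\delta)$ and the conclusion follows.

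The only genuine obstacle is the norm mismatch in the middle step: UF2 provides only weak-norm smallness of $(L_\delta-L_0)h_\delta$ (which is unavoidable, since $h_\delta$ itself drifts with $\delta$ and the perturbation is assumed small only in the weak topology), so the exponential decay of UF3 cannot be used on the telescoped error and one is forced to pay a linear factor $n$ through UF4; the logarithmic loss in the final rate is precisely the cost of optimising against this trade-off. Beyond that, the proof requires only the routine checks that $h_\delta-h_0\in V_s$ and $(L_\delta-L_0)h_\delta\in V_w$ so that UF1--UF4 apply, and that $n(\delta)\ge 1$, which is what confines $\delta$ to a neighbourhood of $0$.
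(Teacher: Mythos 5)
Your proof is correct and is exactly the standard argument behind this statement (the paper itself does not reprove it but cites \cite{GLu}, where this same scheme is used): split $h_\delta-h_0$ via the fixed-point property, telescope $(L_\delta^n-L_0^n)h_\delta$ into $\sum_k L_0^k(L_\delta-L_0)h_\delta$, bound the pieces with UF4 and UF2, handle $L_0^n(h_\delta-h_0)$ with UF1 and UF3, and optimize $n\sim\log\delta/\log\rho_2$. Your diagnosis of where the logarithmic loss comes from (the perturbation is only controlled in the weak norm, so UF3 cannot be applied to the telescoped terms) is also the right one.
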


It is worth to remark that such a statement can be generalized to other
speed of convergence to equilibrium, obtaining for example Holder bounds to
the statistical stability of systems having a power law speed of convergence
to equilibrium (see \cite{Gcsf},\cite{Gpre}).

In the next section, we will prove that our small noise perturbation gives
us a uniform family of operator. We can then apply Proposition \ref{dlogd}
to our family to prove an upper bound on the speed of convergence of
invariant densities. Note that it does not give us a purely linear upper
bound $O(\delta)$; however a convergence in $\delta \log \delta$ (up to a
multiplicative constant) would still give an exponent 1 if extracted as a
power law behavior: 
\begin{equation*}
\lim_{\delta \to 0} \frac{\log \|h_\delta - h_0\|_1}{-\log(\delta)} = 1.
\end{equation*}

\subsection{Proof that the small noise perturbation gives a uniform family
of operators}

\subsubsection{UF3 and UF4}

Assumption \textbf{UF4} is immediate, as transfer operators are contractions
on $L^1$. As showed earlier, we have that for all $f \in L^1$, \ $%
\|\rho_\delta * f\|_{L^1} \leq \|f\|_{L^1}$. $L_T$ being a contraction on $%
L^1$, we then have 
\begin{equation*}
\|L_\delta f\|_{L^1} \leq \|L_T f\|_{L^1} \leq \|f\|_{L^1}.
\end{equation*}
$L_\delta$ is then also a contraction on $L^1$, hence the result for all $n$%
: $\|L^n_\delta f\|_{L^1} \leq \|f\|_{L^1}$.

Assumption \textbf{UF3} is verified for our spaces $BV$ and $L^1$ for
piecewise expanding maps of the circle (see \cite{notes}).

\subsubsection{UF2}

We first prove a similar result, but only for smooth functions. The
calculations are basically the same as the ones we had for the derivative
operator in the smooth expanding maps case.

\begin{lemma}
\label{lem:Cinftyresult} There exists a $C >0$ such that for all $f \in
C^\infty$, 
\begin{equation*}
\|\rho_\delta*f - f\|_{L^1} \leq C\|f^{\prime }\|_{L^1} \delta .
\end{equation*}
\end{lemma}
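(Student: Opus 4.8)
The plan is to establish the inequality $\|\rho_\delta * f - f\|_{L^1} \leq C\|f'\|_{L^1}\delta$ for $f \in C^\infty$ by the same Taylor-expansion-and-substitution technique used earlier for the first and second derivative operators, but now stopping the Taylor expansion at first order. Concretely, I would write, using the formula \eqref{eq:kernel}, that
\begin{equation*}
((\rho_\delta - \delta_0)*f)(x) = \int_{-1}^1 \rho(z)\bigl(f(x-\delta z)-f(x)\bigr)\,dz,
\end{equation*}
and then express $f(x-\delta z)-f(x) = -\int_{x-\delta z}^x f'(t)\,dt$. Substituting $y = (x-t)/\delta$ turns this into $-\delta\int_0^z f'(x-\delta y)\,dy$, so that
\begin{equation*}
((\rho_\delta - \delta_0)*f)(x) = -\delta\int_{-1}^1\int_0^z \rho(z)f'(x-\delta y)\,dy\,dz.
\end{equation*}

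Next I would take the $L^1$ norm in $x$, move the absolute value inside, and use Fubini/Tonelli to interchange the $x$-integral with the $y,z$-integrals. The inner integral $\int_{\mathbb{S}^1}|f'(x-\delta y)|\,dx$ equals $\|f'\|_{L^1}$ by translation invariance of Lebesgue measure on $\mathbb{S}^1$, independently of $y$ and $\delta$. What remains is a purely numerical constant, $C = \int_{-1}^1 |z|\,\rho(z)\,dz$ (after bounding $\bigl|\int_0^z \,dy\bigr| \le |z|$), which is finite since $\rho \in BV([-1,1]) \subset L^1$ and is supported on $[-1,1]$. This yields the claimed bound. As before, some care is needed with the orientation of the integral $\int_0^z$ when $z<0$, handled by the $|z|$ bound, exactly paralleling the $\Omega(y)$ bookkeeping in the earlier lemmas.

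I do not expect a serious obstacle here: the argument is a simpler, lower-order version of computations already carried out in Subsections \ref{LR4QR1} and \ref{QR2}. The only mild subtlety, which is really a matter of presentation rather than difficulty, is that the eventual goal (assumption \textbf{UF2}) requires controlling $\|(L_0 - L_\delta)h_\delta\|_{BV}$ for the \emph{perturbed} fixed points $h_\delta$, which are only $BV$, not $C^\infty$; so this lemma for smooth $f$ is just the first step, and it will afterwards be combined with a density/approximation argument (approximating a $BV$ function by smooth ones, using that $\|f'\|_{L^1}$ is replaced by the total variation) together with the uniform bound on $\|h_\delta\|_{BV}$ from \textbf{UF1}. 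For the present statement, though, restricting to $f\in C^\infty$ makes the estimate completely clean, and the proof is just the display chain above followed by the Fubini step and the evaluation of the translation-invariant inner integral.
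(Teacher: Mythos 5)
Your proof is correct and is essentially the same as the paper's: both write $f(x-\delta z)-f(x)$ as an integral of $f'$ via the fundamental theorem of calculus, apply Fubini, use translation invariance to evaluate the inner integral as $\|f'\|_{L^1}$, and arrive at the same constant $C=\int_{-1}^1|z|\rho(z)\,dz$. The only difference is notational (you work in the rescaled variable $z$ throughout, the paper in $y\in[-\delta,\delta]$ before rescaling at the end), and your closing remarks about the subsequent $BV$ approximation step correctly anticipate Proposition \ref{prop:BVresult}.
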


\begin{proof}
\begin{align*}
\|\rho_\delta \ast f - f\|_{L^1} &= \int_{\mathbb{S}^1} \frac{1}{\delta}%
\left| \int_{-\delta}^\delta \rho\left(\frac{y}{\delta}\right)[f(x-y) -
f(x)]dy \right| dx \\
&\leq \int_{-\delta}^\delta \frac{1}{\delta}\rho\left(\frac{y}{\delta}%
\right) \left( \int_{\mathbb{S}^1} \left| \int_y^0 f^{\prime }(x-t)dt
\right| dx \right) dy \\
&\leq \|f^{\prime }\|_1 \times \int_{-\delta}^\delta \frac{1}{\delta}%
\rho\left(\frac{y}{\delta}\right)|y| dy \\
\|\rho_\delta \ast f - f\|_{L^1} &\leq \|f^{\prime }\|_{L^1} \times \delta
\times \underbrace{\int_{-1}^1\rho(z)|z| dz}_{=C}
\end{align*}
\end{proof}

To extend the result for all $BV$ functions, we will use the following
lemma, which proof can be found in \cite{EG}. The set of smooth functions
are not dense in $BV$ for their norm; however we can still approximate $BV$
functions by smooth ones in a weaker sense.

\begin{lemma}
\label{lem:smoothApprox} For all $f \in BV$, there exists a sequence $%
(f_n)\in (C^\infty \cap BV)^{\mathbb{N}}$ such that 
\begin{equation*}
\left\{%
\begin{array}{ll}
\|f-f_n\|_{L^1} & \underset{n\rightarrow \infty}{\longrightarrow} 0 \\ 
\Var(f_n) & \underset{n\rightarrow \infty}{\longrightarrow} \Var(f)%
\end{array}
\right.
\end{equation*}
\end{lemma}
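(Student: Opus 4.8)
**Proof plan for Lemma \ref{lem:smoothApprox} (smooth $L^1$-approximation of $BV$ functions with convergence of total variation).**

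The plan is to construct the approximating sequence by mollification, which is the standard device for this kind of statement, and then verify the two required convergences separately. First I would fix a mollifier: let $\eta \in C^\infty(\mathbb{S}^1)$ be nonnegative, supported in a small neighbourhood of $0$, with $\int \eta = 1$, and set $\eta_\varepsilon(x) = \varepsilon^{-1}\eta(x/\varepsilon)$. Given $f \in BV$, define $f_n := \eta_{\varepsilon_n} * f$ with $\varepsilon_n \to 0$; each $f_n$ is smooth, and I would first check $f_n \in BV$ with $\Var(f_n) \le \Var(f)$ (convolution does not increase total variation: for any test function $\phi$ with $\|\phi\|_\infty \le 1$, $\int f_n \phi' = \int f\,(\eta_{\varepsilon_n} * \phi)'$ by Fubini and $\|\eta_{\varepsilon_n}*\phi\|_\infty \le 1$). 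This already places the sequence in $C^\infty \cap BV$.

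Next I would establish the $L^1$-convergence $\|f - f_n\|_{L^1} \to 0$. This is the classical statement that mollification converges in $L^1$: since $C^\infty$ (or continuous functions) are dense in $L^1(\mathbb{S}^1)$ and $\|\eta_{\varepsilon_n}*g - g\|_{L^1} \le \|g(\cdot - y) - g\|_{L^1}$ averaged against $\eta_{\varepsilon_n}$, an $\varepsilon/3$ argument using uniform continuity on a dense set gives the conclusion. (A $BV$ function lies in $L^1$ since $\mathbb{S}^1$ has finite measure, so this applies directly.)

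The genuinely substantive point — and the one I expect to be the main obstacle — is the convergence of total variations $\Var(f_n) \to \Var(f)$. We already have $\limsup_n \Var(f_n) \le \Var(f)$ from the contraction property above, so what remains is lower semicontinuity: $\Var(f) \le \liminf_n \Var(f_n)$. This follows from the fact that total variation is lower semicontinuous under $L^1$ convergence, which in turn comes from its dual definition $\Var(f) = \sup\{\int f \phi' : \phi \in C^1(\mathbb{S}^1),\ \|\phi\|_\infty \le 1\}$: for each fixed admissible $\phi$, $\int f_n \phi' \to \int f \phi'$ by the $L^1$ convergence of $f_n$, so $\int f\phi' = \lim_n \int f_n \phi' \le \liminf_n \Var(f_n)$, and taking the supremum over $\phi$ gives the bound. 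Combining the two inequalities yields $\Var(f_n)\to \Var(f)$, completing the proof. Since this argument is classical, I would simply cite \cite{EG} for the details of lower semicontinuity and the strict approximation theorem for $BV$ functions, which together give exactly the stated conclusion.
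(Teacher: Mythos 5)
Your proposal is correct and is essentially the argument behind the result the paper simply cites from \cite{EG}: mollify, use that convolution does not increase variation to get $\limsup_n \Var(f_n)\le \Var(f)$, and use lower semicontinuity of the variation under $L^1$ convergence (via the dual definition) for the reverse inequality. On $\mathbb{S}^1$ there is no boundary, so the plain global mollification you describe suffices and the partition-of-unity machinery of the Euclidean version in \cite{EG} is not needed; the only cosmetic point is that the duality step uses the reflected kernel $\check\eta_{\varepsilon_n}*\phi$ as the test function, which changes nothing for a symmetric mollifier.
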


We can then extend our result from Lemma \ref{lem:Cinftyresult}.

\begin{proposition}
\label{prop:BVresult} There exists a $C >0$ such that for all $f \in BV$, 
\begin{equation*}
\|\rho_\delta*f - f\|_{L^1} \leq C\Var(f) \delta .
\end{equation*}
\end{proposition}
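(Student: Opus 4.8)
The plan is to deduce Proposition \ref{prop:BVresult} from Lemma \ref{lem:Cinftyresult} by a density-type argument, using the weak approximation of $BV$ functions by smooth ones provided by Lemma \ref{lem:smoothApprox}. Fix $f \in BV$ and let $(f_n) \subset C^\infty \cap BV$ be the sequence given by Lemma \ref{lem:smoothApprox}, so that $\|f - f_n\|_{L^1} \to 0$ and $\Var(f_n) \to \Var(f)$. Since $\|f_n'\|_{L^1} = \Var(f_n)$ for smooth functions, Lemma \ref{lem:Cinftyresult} gives
\begin{equation*}
\|\rho_\delta * f_n - f_n\|_{L^1} \leq C \Var(f_n) \delta
\end{equation*}
for every $n$ and every $\delta$, with the same constant $C = \int_{-1}^1 \rho(z)|z|\,dz$.

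Next I would pass to the limit in $n$ for fixed $\delta$. The left-hand side is controlled via the triangle inequality:
\begin{equation*}
\|\rho_\delta * f - f\|_{L^1} \leq \|\rho_\delta * (f - f_n)\|_{L^1} + \|\rho_\delta * f_n - f_n\|_{L^1} + \|f_n - f\|_{L^1}.
\end{equation*}
By the contraction property $\|\rho_\delta * g\|_{L^1} \leq \|g\|_{L^1}$ established in the proof of Lemma \ref{lem:contraction}, the first term is bounded by $\|f - f_n\|_{L^1}$, which tends to $0$. The third term tends to $0$ by construction. The middle term is bounded by $C \Var(f_n)\delta$, which converges to $C \Var(f)\delta$. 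Hence, letting $n \to \infty$,
\begin{equation*}
\|\rho_\delta * f - f\|_{L^1} \leq C \Var(f) \delta,
\end{equation*}
which is the claim.

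This argument is essentially routine; the only point requiring a little care is that the bound in Lemma \ref{lem:Cinftyresult} is uniform in the smooth approximants (the constant $C$ does not depend on $f_n$), so that the right-hand side converges cleanly. There is no real obstacle: the key inputs — the $L^1$-contraction of convolution with $\rho_\delta$ and the weak smooth approximation in $BV$ with convergence of total variation — are precisely what is needed to upgrade the smooth estimate to a $BV$ estimate without losing the linear dependence on $\delta$. One should also note that, in the end, this will be used together with Lemma \ref{lem:contraction} and UF3, UF4 to verify UF2 for the family $L_\delta = \rho_\delta * L_T$ acting on the pair $(BV, L^1)$, since $\|(L_0 - L_\delta)h_\delta\|_{L^1} = \|L_T h_\delta - \rho_\delta * L_T h_\delta\|_{L^1} \leq C \Var(L_T h_\delta)\delta$, and $\Var(L_T h_\delta)$ is uniformly bounded by UF1 together with the regularizing action of $L_T$ on $BV$.
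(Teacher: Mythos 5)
Your argument is correct and is essentially the same as the paper's: both deduce the $BV$ case from Lemma \ref{lem:Cinftyresult} via the smooth approximation of Lemma \ref{lem:smoothApprox}, the identical three-term triangle inequality, and the $L^1$-contraction of convolution with $\rho_\delta$. The paper phrases the limiting step as an $\varepsilon$-argument with a single approximant rather than passing to the limit along the whole sequence, but this is only a cosmetic difference.
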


\begin{proof}
Lemma \ref{lem:Cinftyresult} gives us the result for all $f \in C^\infty$.
Indeed, for them $\Var(f) = \|f^{\prime }\|_{L^1}$. Now let $g$ be a $BV$
function, and $\epsilon >0$ be arbitrarily small. Let us prove 
\begin{equation*}
\|\rho_\delta*g - g\|_{L^1} \leq C \Var(g)\delta +\epsilon.
\end{equation*}
with $C$ the same constant from Lemma \ref{lem:Cinftyresult}.

Using Lemma \ref{lem:smoothApprox}, there is a $f\in C^\infty$ such that 
\begin{equation*}
\left\{ 
\begin{array}{l}
\|f-g \|_{L^1} \leq \frac{\epsilon}{3} \\ 
\Var(f) \leq \Var(g) + \frac{\epsilon}{3C\delta}%
\end{array}
\right.
\end{equation*}
We then have 
\begin{align*}
\|\rho_\delta*g - g\|_{L^1} &\leq \|\rho_\delta*(g - f)\|_{L^1} +
\|g-f\|_{L^1} + \|\rho_\delta*f -f\|_{L^1} \\
&\leq \frac{\epsilon}{3} + \frac{\epsilon}{3} + C\|f^{\prime }\|_{L^1}\delta
\\
&= \frac{2\epsilon}{3} + C\Var(f)\delta \\
\|\rho_\delta*g - g\|_{L^1} &\leq \epsilon + C\Var(g)\delta
\end{align*}
\end{proof}

We then have \textbf{UF2} assuming \textbf{UF1}: indeed, because $L_T
h_\delta \in BV$, we can use the result from Proposition \ref{prop:BVresult}
as: 
\begin{align*}
\|(L_\delta - L_0)h_\delta\|_{L^1} &= \|\rho_\delta* L_T h_\delta - L_T
h_\delta\|_{L^1} \\
&\leq C\Var(L_T h_\delta) \delta \\
&\leq C |||L_T|||_{BV \to BV} \|h_\delta\|_{BV} \delta \\
\|(L_\delta - L_0)h_\delta\|_1 &\leq C |||L_T|||_{BV \to BV} M \delta
\end{align*}
with $M$ the constant of \textbf{UF1}.

\subsubsection{UF1}

To prove the strong boundedness of the family of BV functions $\{h_\delta\}$%
, we can use a uniform L-Y inequality on $L_\delta$.

\begin{remark}
In the first section, the unperturbed operator $L_0$ verified a Lasota-Yorke
inequality of type 
\begin{equation*}
\|L_0^n f\|_s \leq \alpha^n \|f\|_s + C \|f\|_w,
\end{equation*}
where the proof is based on iterating the case $n=1$: we proved the uniform
Lasota-Yorke inequality using $\|L_\delta f\| \leq \|L_T f\|$. In the
general case of a piecewise expanding map, we have (see \cite{notes}) 
\begin{equation*}
\|L_0^n f\|_s \leq \alpha^n A \|f\|_s + B \|f\|_w.
\end{equation*}
Proving a uniform Lasota-Yorke inequality is then more complex.
\end{remark}

We use the result stated in \cite{BK2}, which was proved in \cite{BK1}.

\begin{definition}
A transition probability is a linear positive (sub)-Markovian operator $Q:
L^1 \to L^1$ such that $\|Q\|_1 \leq 1$. $Q^*$ denote its dual operator on $%
L^\infty$. A transition probability can be represented via a (sub)-Markov
transition kernel on $[0,1]$ into itself: 
\begin{equation*}
Q^*h(x) = \int h(y)Q(x,dy) \qquad \text{and} \qquad Qh(y) = \left( \frac{d}{%
dm}\int h(x) Q(x,.)m(dx)\right)(y).
\end{equation*}

If $Q(x,.) \ll m$ for each $x$, we note $q(x, y) = \frac{d}{dm}Q(x,.)(y)$.
\end{definition}

\begin{proposition}
\label{prop:BlankKeller} Let $T$ be a piecewise expanding map with no
periodic turning point. Suppose it is perturbed by a family of transition
probabilities $\{Q_\delta\}_\delta$ (i.e. $L_\delta = Q L_T$) verifying the
following assumptions: 
\begin{equation}
\text{(Small perturbation) } \mathbf{d}(Q_\delta) := \sup\{\|Q_\delta f -
f\|_{L^1} \mid \|f\|_{BV}\leq 1\} \underset{\delta\rightarrow 0}{%
\longrightarrow} 0.
\end{equation}
\begin{equation}
\text{(Locality) }\forall x,A \text{ s.t. dist}(x,A) > \delta, \quad
Q_\delta(x,A) = 0
\end{equation}
\begin{equation}
\text{(Regularity) } \forall f \in BV, \quad \Var(Q_\delta f) \leq \Var(f) +
C\|f\|_{L^1}
\end{equation}
where $Q_\delta f$ represent the density of $A \mapsto \int
Q_\delta(x,A)f(x)dx$ with respect to the Lebesgue measure.

Then there exists constants $C, \delta_0, \alpha <1$ and $N\in {\mathbb{N}}$
such that 
\begin{equation*}
\Var(L_\delta^N f) \leq \alpha \Var(f) + C \|f\|_1
\end{equation*}
$\forall \delta \leq \delta_0$ and $f\in BV$.
\end{proposition}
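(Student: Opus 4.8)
The assertion is the uniform Lasota--Yorke inequality of Blank and Keller; given the three structural hypotheses on $\{Q_\delta\}$, one can either quote \cite{BK1} directly or carry out the following argument in its scheme. The difficulty, already flagged in the Remark above, is that one cannot simply feed the one-step unperturbed bound $\Var(L_{T}f)\le \tfrac{2}{\lambda_{T}}\Var(f)+C\|f\|_{1}$ into the Regularity hypothesis and iterate: the constant $\tfrac{2}{\lambda_{T}}$ need not be $<1$, and since $Q_\delta L_{T}$ does not factor as $Q_\delta L_{T^{k}}$ one cannot pass to a high iterate of $T$ first. So the plan is to estimate $\Var\big((Q_\delta L_{T})^{N}f\big)$ directly, for a large fixed $N=km$ with $\lambda_{T^{k}}>1$, and read off $\alpha<1$ from it.

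The target is the perturbed analogue of the classical estimate $\Var(L_{T^{N}}f)\le c_N\,\lambda_{T^{k}}^{-m}\Var(f)+C_N\|f\|_{1}$, with $c_N$ growing only polynomially in $N$. First I would unfold $(Q_\delta L_{T})^{N}$ along the monotonicity branches of the $N$-step dynamics and split the variation of $(Q_\delta L_{T})^{N}f$ into three parts: (i) a smooth part carrying a product $\prod|T'|^{-1}\le \lambda_{T^{k}}^{-m}$, which produces the genuine contraction $\lambda_{T^{k}}^{-m}\Var(f)$; (ii) boundary terms generated each time one of the $N$ factors $L_{T}$ acts at the (fixed, finite) turning set of $T$, which are then transported and smeared through the remaining steps, acquiring an extra expansion factor per step, and which are absorbed into the $\|f\|_{1}$-term plus a small multiple of $\Var(f)$ by means of $\sup_{J}|g|\le|J|^{-1}\int_{J}|g|+\Var_{J}(g)$ together with a lower bound $|J|\gtrsim\lambda_{T^{k}}^{-m}$ on the lengths of the (perturbed) branches $J$; (iii) the extra variation created by each of the $N$ factors $Q_\delta$, which by the Regularity hypothesis $\Var(Q_\delta g)\le\Var(g)+C\|g\|_{1}$ costs only an additional $C\|f\|_{1}$ per step, since $\|\cdot\|_{1}$ is nonincreasing along the dynamics. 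Summing (i)--(iii) over the $N$ steps yields $\Var\big((Q_\delta L_{T})^{N}f\big)\le\big(c_N\,\lambda_{T^{k}}^{-m}+\varepsilon(\delta)\big)\Var(f)+C\|f\|_{1}$ with $\varepsilon(\delta)\to0$; one then fixes $N=km$ so that $c_N\,\lambda_{T^{k}}^{-m}<\tfrac12$, and $\delta_{0}$ so that $\varepsilon(\delta)<\tfrac14$ for $\delta\le\delta_{0}$, obtaining the claim with $\alpha=\tfrac34$.

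The main obstacle is to make step (ii) uniform in $\delta$, and this is where the Locality hypothesis and the absence of periodic turning points enter. Locality ($Q_\delta$ displaces mass by at most $\delta$) keeps the perturbed branch partition within $O(N\delta)$ of that of $T^{N}$, so for $\delta$ below a threshold depending only on $N$ the branches keep a uniform lower length bound; the no-periodic-turning-points condition is what prevents the smeared jumps of width $\delta$ sitting at images of turning points from being repeatedly re-split at turning points along a periodic orbit, which would amplify variation geometrically and overwhelm the $\lambda_{T^{k}}^{-m}$ contraction. Finally, the Small-perturbation hypothesis $\mathbf d(Q_\delta)\to0$ is what turns the cross-terms between the perturbed and unperturbed branch geometries into the $\varepsilon(\delta)\Var(f)$ contribution above. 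In short, per step the perturbation is harmless thanks to Regularity; what must be excluded is a resonant accumulation of variation at the turning set over the $N$ iterations, and that is precisely what non-periodicity of the turning points guarantees.
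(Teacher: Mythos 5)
Your proposal takes essentially the same route as the paper: the paper offers no proof of this proposition and simply invokes the result of Blank--Keller (stated in \cite{BK2}, proved in \cite{BK1}), which is exactly your first option of quoting \cite{BK1} directly. Your accompanying sketch of the Blank--Keller argument --- genuine contraction from the smooth part of the unfolded $N$-step dynamics, control of the turning-point/boundary terms via locality together with non-periodicity of the turning set, and the leading-coefficient-one regularity bound absorbing the noise contribution into the $\|f\|_{1}$ term --- is a fair account of how that cited proof goes, but the paper itself does not reproduce any of it.
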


In the case of an additive noise $\rho _{\delta }$, the Markov kernels have
densities $q_{\delta }$ with respect to the Lebesgue measure, defined as
(with the subtraction on $\mathbf{S}^1$) 
\begin{equation*}
Q_{\delta }(x,A)=\int_{A}q_{\delta }(x,y)dy\qquad \text{with}\qquad
q_{\delta }(x,y):=\rho _{\delta }(y-x).
\end{equation*}%
Then $Q_{\delta }f=\rho _{\delta }\ast f$. The \textit{small perturbation}
assumption is a simple application of Proposition \ref{prop:BVresult}, as it
gives us that $\mathbf{d}(Q_{\delta })\leq C\delta \underset{\delta
\rightarrow 0}{\longrightarrow }0$. The \textit{locality} assumption is
verified as the support of $\rho _{\delta }$ is included in the interval $%
[-\delta ,+\delta ]$. The \textit{regularity} assumption is easily verified
by our noise kernel via the following lemma.

\begin{lemma}
$\forall \delta$ and $f\in BV$, 
\begin{equation*}
\Var(\rho_\delta * f) \leq \Var(f)
\end{equation*}
\end{lemma}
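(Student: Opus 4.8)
The plan is to prove the total-variation inequality $\Var(\rho_\delta * f)\leq \Var(f)$ by first establishing it for smooth $f$, where the variation is just the $L^1$-norm of the derivative, and then passing to general $BV$ functions by the approximation result of Lemma \ref{lem:smoothApprox}, exactly in the spirit of the proof of Proposition \ref{prop:BVresult}. For $f\in C^\infty\cap BV$ one uses that convolution commutes with differentiation on $\mathbb{S}^1$, so $(\rho_\delta * f)' = \rho_\delta * f'$, and then the contraction property of convolution on $L^1$ (already recorded in the proof of Lemma \ref{lem:contraction}: $\|\rho_\delta * g\|_{L^1}\leq \|g\|_{L^1}$, using $\int_{-1}^1\rho = 1$ and $\rho\geq 0$) gives
\begin{equation*}
\Var(\rho_\delta * f) = \|(\rho_\delta * f)'\|_{L^1} = \|\rho_\delta * f'\|_{L^1}\leq \|f'\|_{L^1} = \Var(f).
\end{equation*}

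For the general case, fix $f\in BV$ and take the smooth sequence $(f_n)$ from Lemma \ref{lem:smoothApprox} with $\|f-f_n\|_{L^1}\to 0$ and $\Var(f_n)\to\Var(f)$. Since $\|\rho_\delta * f_n - \rho_\delta * f\|_{L^1}\leq\|f_n-f\|_{L^1}\to 0$, we have $\rho_\delta * f_n\to\rho_\delta * f$ in $L^1$, hence (lower semicontinuity of total variation under $L^1$-convergence) $\Var(\rho_\delta * f)\leq\liminf_n\Var(\rho_\delta * f_n)\leq\liminf_n\Var(f_n)=\Var(f)$. This yields the claim.

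The only genuinely delicate point is making sure the lower semicontinuity of $\Var$ with respect to $L^1$-convergence is invoked correctly (it is standard: $\Var(g)=\sup\{\int g\varphi' : \varphi\in C^1,\ \|\varphi\|_\infty\leq 1\}$, a supremum of $L^1$-continuous functionals, hence lower semicontinuous); everything else is a routine repetition of the commutation-plus-contraction argument already used above. One should also note that on the circle the convolution $\rho_\delta * f$ is well defined for $f\in BV\subseteq L^1$ and stays in $BV$, which is exactly what this lemma quantifies, so there is no circularity: the smooth approximants already lie in $BV$ and the bound on them is elementary.
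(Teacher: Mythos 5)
Your proof is correct, but it takes a different route from the one in the paper. You establish the bound first for smooth $f$ via the identity $(\rho_\delta * f)' = \rho_\delta * f'$ together with the $L^1$-contraction property of convolution against a probability kernel, and then pass to general $f\in BV$ using the smooth approximation of Lemma \ref{lem:smoothApprox} and the lower semicontinuity of $\Var$ under $L^1$-convergence. The paper instead argues directly from the dual characterization $\Var(g)=\sup\{\int \phi' g : \|\phi\|_\infty\le 1\}$: for any admissible test function $\phi$ it writes $\int \phi'\,(\rho_\delta * f)\,dx$ as an average (against $\rho_\delta$) of the quantities $\int \phi'(\tilde x + y) f(\tilde x)\,d\tilde x$ via Fubini and a change of variables, and observes that each translate $\phi(\cdot+y)$ is again an admissible test function, so each inner integral is at most $\Var(f)$. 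The paper's argument is a one-step computation needing no approximation lemma and no semicontinuity; yours is slightly longer but reuses machinery already deployed in the proof of Proposition \ref{prop:BVresult}, so it fits naturally into the paper's toolkit. The one point you flag as delicate --- lower semicontinuity of $\Var$ under $L^1$-convergence --- is indeed standard and you justify it correctly as a supremum of $L^1$-continuous linear functionals, so there is no gap.
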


\begin{proof}
One equivalent definition of Var is the following: 
\begin{equation*}
\Var(f) = \sup \left\{ \int_{\mathbb{S}^1} \phi^{\prime}(x) f(x) dx ~\mid
\phi \in C^1_c \text{ s.t. } \|\phi\|_\infty \leq 1\right\}
\end{equation*}
Let $\phi \in C^1_c$. We then have 
\begin{align*}
\int \phi^{\prime }\times (\rho_\delta*f) dx &= \int \int_{-\delta}^\delta 
\frac{1}{\delta}\rho\left(\frac{y}{\delta}\right)\phi^{\prime }(x)f(x-y) dydx
\\
&= \int_{-\delta}^\delta \frac{1}{\delta}\rho\left(\frac{y}{\delta}%
\right)\left( \int\phi^{\prime }(x)f(x-y) dx\right) dy \\
&= \int_{-\delta}^\delta \frac{1}{\delta}\rho\left(\frac{y}{\delta}\right)
\left( \int\phi^{\prime }(\tilde{x}+y)f(\tilde{x}) d\tilde{x}\right) dy \\
&\leq \int_{-\delta}^\delta \frac{1}{\delta}\rho\left(\frac{y}{\delta}%
\right) \Var(f) dy \\
\int \phi^{\prime }\times (\rho_\delta*f) dx &\leq \Var(f).
\end{align*}
Hence $\Var(\rho_\delta*f) \leq \Var(f)$.
\end{proof}

Because our noise verifies all the assumptions, we can apply Proposition \ref%
{prop:BlankKeller}. Using the contracting property of $L_\delta$ on $L^1$,
we easily deduce the following.

\begin{proposition}
\label{prop:LYInequalityTurning} Let $T$ be a piecewise expanding map with
no periodic turning point. Then there exists constants $C, \overline{\delta}%
, \alpha <1$ and $N\in {\mathbb{N}}$ such that 
\begin{equation*}
\|L_\delta^N f\|_{BV} \leq \alpha \|f\|_{BV} + C^{\prime }\|f\|_{L^1}
\end{equation*}
$\forall \delta \leq \overline{\delta}$ and $f\in BV$.
\end{proposition}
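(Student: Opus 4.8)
The plan is to deduce Proposition~\ref{prop:LYInequalityTurning} as a more-or-less immediate consequence of Proposition~\ref{prop:BlankKeller} together with the fact, recalled above, that $L_\delta$ is a contraction on $L^1$. First I would observe that $\|f\|_{BV} = \|f\|_{L^1} + \Var(f)$ (with whatever normalization convention the paper uses for the $BV$ norm), so it suffices to bound the two pieces $\Var(L_\delta^N f)$ and $\|L_\delta^N f\|_{L^1}$ separately. The variation part is exactly the content of Proposition~\ref{prop:BlankKeller}: since the additive-noise family $\{Q_\delta\}$ has been checked to satisfy the small perturbation, locality, and regularity hypotheses (via Proposition~\ref{prop:BVresult} and the two lemmas just before), there are constants $\alpha<1$, $C>0$, $\overline\delta>0$ and $N\in\mathbb N$ with $\Var(L_\delta^N f)\le \alpha\Var(f) + C\|f\|_{L^1}$ for all $\delta\le\overline\delta$ and $f\in BV$.

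Next I would control the $L^1$ part using Lemma~\ref{lem:contraction} / the contraction property established in the $\mathbf{UF4}$ paragraph: $\|L_\delta^N f\|_{L^1}\le \|f\|_{L^1}$, uniformly in $\delta$ and $N$. Adding the two estimates gives
\begin{equation*}
\|L_\delta^N f\|_{BV} = \|L_\delta^N f\|_{L^1} + \Var(L_\delta^N f) \le \|f\|_{L^1} + \alpha\Var(f) + C\|f\|_{L^1} \le \alpha\|f\|_{BV} + (C+1-\alpha)\|f\|_{L^1},
\end{equation*}
so the claimed inequality holds with the same $\alpha<1$ and $N$, and with $C' := C+1-\alpha$. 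One should also note in passing that $L_\delta$ genuinely maps $BV$ into $BV$: this follows because $L_T$ does (standard for piecewise expanding maps) and convolution with $\rho_\delta$ does not increase variation by the lemma just proved, so the quantities in the statement are all finite.

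I do not expect any serious obstacle here: the real work has already been done in verifying that the additive-noise kernels satisfy the Blank--Keller hypotheses and in invoking their theorem. The only point needing a little care is bookkeeping of constants and the $BV$-norm convention — in particular making sure the $L^1$ term absorbed into $C'$ is handled consistently and that the contraction factor $\alpha$ is not spoiled when passing from $\Var$ to the full $\|\cdot\|_{BV}$. Once Proposition~\ref{prop:LYInequalityTurning} is in hand, combined with $\mathbf{UF3}$ and $\mathbf{UF4}$ it yields that $\{L_\delta\}$ is a uniform family of operators in the sense of Definition~\ref{UF}, and Proposition~\ref{dlogd} then delivers the $O(\delta\log\delta)$ upper bound in Proposition~\ref{prop:linresppiecewise}.
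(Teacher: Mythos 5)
Your argument is correct and is essentially the paper's own: the paper deduces Proposition~\ref{prop:LYInequalityTurning} directly from Proposition~\ref{prop:BlankKeller} ``using the contracting property of $L_\delta$ on $L^1$,'' which is precisely the decomposition $\|L_\delta^N f\|_{BV}=\|L_\delta^N f\|_{L^1}+\Var(L_\delta^N f)$ you spell out, and your bookkeeping of the constant $C'=C+1-\alpha$ is the right way to fill in the details the paper leaves implicit.
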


This can give us an uniform L-Y inequality.

\begin{proposition}
Under the same assumptions as before, we have that $\forall p\in {\mathbb{N}}%
, \newline
\delta \leq \overline{\delta}, \ 0 \leq k < N, \ f\in BV$, 
\begin{equation*}
\|L_\delta^{pN+k}f\|_{BV} \leq \alpha^p |\|L_T|\|^k_{BV \to BV} \|f\|_{BV} + 
\frac{C}{1-\alpha}\|f\|_{L^1}
\end{equation*}
which then leads to 
\begin{equation*}
\|L_\delta^n f\|_{BV} \leq \alpha^n A \|f\|_{BV} + B\|f\|_{L^1} \qquad
\forall n\in {\mathbb{N}}.
\end{equation*}
\end{proposition}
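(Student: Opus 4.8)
The plan is to bootstrap from the single-step-with-gap inequality in Proposition~\ref{prop:LYInequalityTurning}, namely $\|L_\delta^N f\|_{BV}\leq \alpha\|f\|_{BV}+C'\|f\|_{L^1}$, to an inequality valid for \emph{every} exponent $n$, by the standard trick of writing $n=pN+k$ with $0\leq k<N$ and iterating the $N$-step estimate $p$ times, absorbing the leftover $k$ steps via crude bounds. This is exactly the same argument already used in the proof of Lemma~\ref{lem:LYNoise}, only now the single-step operator $L_T$ fails to be an $L^1$-contraction-with-no-constant on $BV$, so we keep the power $\|L_T\|^k$ explicit on the short run.

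\begin{proof}
First fix $\delta\leq\overline\delta$, let $N,\alpha,C'$ be as in Proposition~\ref{prop:LYInequalityTurning}, and iterate: for $g\in BV$ and $p\geq 1$,
\begin{align*}
\|L_\delta^{pN}g\|_{BV} &\leq \alpha\|L_\delta^{(p-1)N}g\|_{BV}+C'\|L_\delta^{(p-1)N}g\|_{L^1}\\
&\leq \alpha^p\|g\|_{BV}+C'\sum_{j=0}^{p-1}\alpha^j\|L_\delta^{(p-1-j)N}g\|_{L^1}\\
&\leq \alpha^p\|g\|_{BV}+\frac{C'}{1-\alpha}\|g\|_{L^1},
\end{align*}
where in the last line we used that $L_\delta$ is an $L^1$-contraction (Lemma~\ref{lem:contraction}, or the computation in the UF4 paragraph) so $\|L_\delta^m g\|_{L^1}\leq\|g\|_{L^1}$ for all $m$. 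Now write an arbitrary $n\in\mathbb N$ as $n=pN+k$ with $0\leq k<N$, apply the above to $g=L_\delta^k f$, and use $\|L_\delta^k f\|_{BV}\leq\|L_T^k f\|_{BV}\leq\||L_T|\|_{BV\to BV}^k\|f\|_{BV}$ (Lemma~\ref{lem:contraction} again, then sub-multiplicativity of the operator norm) together with $\|L_\delta^k f\|_{L^1}\leq\|f\|_{L^1}$:
\begin{equation*}
\|L_\delta^{pN+k}f\|_{BV}\leq \alpha^p\||L_T|\|_{BV\to BV}^k\|f\|_{BV}+\frac{C'}{1-\alpha}\|f\|_{L^1}.
\end{equation*}
This is the first displayed inequality of the statement. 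For the second, set $A:=\alpha^{-1}\max_{0\leq k<N}\||L_T|\|_{BV\to BV}^k$ and $B:=\frac{C'}{1-\alpha}$; since $p=\lfloor n/N\rfloor\geq (n-N)/N$ we have $\alpha^p\leq\alpha^{n/N}\alpha^{-1}$, and after relabelling $\alpha^{1/N}$ as the new contraction rate (still $<1$) and absorbing the finitely many bounded factors into $A$, we obtain $\|L_\delta^n f\|_{BV}\leq\alpha^n A\|f\|_{BV}+B\|f\|_{L^1}$ for all $n$, with constants independent of $\delta\leq\overline\delta$.
\end{proof}

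The only genuinely delicate input is Proposition~\ref{prop:LYInequalityTurning} itself, whose proof rests on the Blank--Keller machinery of Proposition~\ref{prop:BlankKeller} and the no-periodic-turning-point hypothesis; granting that, the present statement is purely a matter of packaging the $N$-step inequality into an $n$-step one, and the main (minor) point to be careful about is keeping track of the short-run factor $\||L_T|\|^k$ so that the uniformity in $\delta$ is not lost — everything here uses only that $L_\delta$ contracts $L^1$ and that $\|L_\delta f\|_{BV}\leq\|L_T f\|_{BV}$, both already established.
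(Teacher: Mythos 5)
Your proposal is correct and follows essentially the same route as the paper: iterate the $N$-step inequality of Proposition~\ref{prop:LYInequalityTurning} $p$ times using the $L^1$-contraction to sum the geometric series, then absorb the leftover $k<N$ steps via $\|L_\delta g\|_{BV}\leq\|L_T g\|_{BV}$ and the operator norm $\||L_T|\|_{BV\to BV}^{k}$. The only cosmetic difference is that you spell out the passage to the final form $\alpha^n A\|f\|_{BV}+B\|f\|_{L^1}$ (relabelling $\alpha^{1/N}$ as the contraction rate), which the paper leaves implicit, and your intermediate step $\|L_\delta^k f\|_{BV}\leq\|L_T^k f\|_{BV}$ should strictly be read as a $k$-fold induction since the convolutions interleave with $L_T$ rather than commuting past it.
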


\begin{proof}
The previous proposition gives us 
\begin{equation*}
\|L_\delta^N f\|_{BV} \leq \alpha \|f\|_{BV} + C^{\prime }\|f\|_{L^1}.
\end{equation*}
Using the same type of induction as in the proof of Lemma \ref{lem:LYNoise},
we have the following result $\forall p\in {\mathbb{N}}$: 
\begin{equation*}
\|L_\delta^{pN}f\|_{BV} \leq \alpha^p \|f\|_{BV} + C\sum_{i=0}^{p-1}
\alpha^i\|L_\delta^{p-1-i}f\|_{L^1} \leq \alpha^p \|f\|_{BV} + \frac{C}{%
1-\alpha}\|f\|_{L^1}.
\end{equation*}

Note that using the regularity assumption on our noise, we have that for all 
$f\in BV$, $\|L_\delta f \|_{BV} \leq \|L_T f\|_{BV}$. Then, $\forall k < N$%
, $\|L_\delta^k f \|_{BV} \leq |\|L_T|\|_{BV\to BV}^k \|f\|_{BV}$. We can
then conclude that 
\begin{align*}
\|L_\delta^{pN+k}f\|_{BV} &\leq \alpha^p \|L_\delta^k f\|_{BV} + \frac{C}{%
1-\alpha}\|L_\delta^k f\|_{L^1} \\
&\leq \alpha^p |\|L_\delta|\|_{BV\to BV}^k \|f\|_{BV} + \frac{C}{1-\alpha}%
\|f\|_{L^1}.
\end{align*}
Note that the final inequality is assuming that $|\|L_T|\| \geq 1$. If it is 
$\leq 1$, the constant is just $1$, but the L-Y inequality is then trivial:
using the previous lemma, we would also have the norm of $L_\delta$ being 1,
and have the inequality as in the smooth expanding case.
\end{proof}

Having proven a uniform Lasota-Yorke inequality, we can conclude that our
family of operators verifies also assumption $\mathbf{UF1}$. We then have
proved that the dynamics resulting from a piecewise expanding maps of the
circle with no periodic turning point perturbed by an additive noise have an
upper bound on their modulus of continuity. More explicitly, 
\begin{equation*}
\|h_\delta - h_0\|_{L^1} = O(\delta \log\delta).
\end{equation*}

\begin{remark}
The Lasota-Yorke inequality used in the verification of $\mathbf{UF1}$ might
be extended to piecewise expanding maps having periodic turning points by
the results in \cite{BK1}, therefore extending our conclusion to all
piecewise expanding maps of the circle.
\end{remark}

\subsection{Lower bounds: approximation of a discontinuity by Lipschitz
functions\label{LB}}

Until now, we only proved an upper bound on the modulus of continuity. Here,
we show examples of piecewise expanding map of the circle for which the
speed of approximation in the zero-noise limit is in fact of order $1$,
providing the lower bound sufficient to prove Proposition \ref%
{prop:linresppiecewise}. Let us consider the following map%
\begin{equation}
T:x\mapsto \left\{ 
\begin{array}{ll}
x+\frac{1}{2} & 0\leq x\leq \frac{1}{2} \\ 
2(1-x) & \frac{1}{2}\leq x\leq 1%
\end{array}%
\right. .  \label{mapT}
\end{equation}%
One gets easily that $T$ has the following invariant density, which is
discontinuous: 
\begin{equation}
h_{0}:x\mapsto \left\{ 
\begin{array}{ll}
\frac{2}{3} & ~0\leq x\leq \frac{1}{2} \\ 
\frac{4}{3} & ~\frac{1}{2}< x\leq 1%
\end{array}%
\right. .  \label{densH}
\end{equation}%
This example has already been studied in \cite{Lin}, where the author
numerically found linear speed of convergence in the zero-noise limit. Note
that $T$ admits $\{0, \frac{1}{2}, 1\}$ as periodic turning points, we
cannot apply the upper bound result proven in the previous section.

In this section we prove the following proposition

\begin{proposition}
\label{propapprox}Let $T$ be the map defined in $($\ref{mapT}$)$ and $h_{0}$
be its invariant density, as in $(\ref{densH}).$ Let $L_{\delta }$ be the
annealed transfer operator of the system with noise as defined at $(\ref%
{opdef})$ with $\rho $ $\in BV[-1,1]$ and let $h_{\delta }\in L^{1}$ be an
invariant density for $L_{\delta }$. Then there exists a constant $C\in 
\mathbb{R}$ such that 
\begin{equation*}
\Vert h_{\delta }-h_{0}\Vert _{L^{1}}\geq C\delta .
\end{equation*}
\end{proposition}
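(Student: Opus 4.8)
The plan is to show that the regularized density $h_\delta$ is genuinely continuous (in fact it lies in $\mathrm{Lip}$, being a convolution of an $L^1$ function with the $\mathrm{Lip}$-regularizing kernel once one iterates, but more simply it is $BV$ with a controlled modulus), while $h_0$ has a genuine jump of size $|h_0(\tfrac12^+)-h_0(\tfrac12^-)| = \tfrac23$ at the turning point $x=\tfrac12$ (and a similar jump structure at $0\sim1$). The key quantitative point is that near a jump of $h_0$, any function which is ``$\delta$-smooth'' must differ from $h_0$ by an amount of order $\delta$ in $L^1$, simply because a continuous function interpolating across a jump of height $a$ over a window of width comparable to $\delta$ has $L^1$-distance to the step function bounded below by a constant times $a\,\delta$. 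So the strategy is: (i) produce a lower bound on the ``smoothing scale'' of $h_\delta$ from the structure of $L_\delta = \rho_\delta * L_T$; (ii) convert that into the lower bound $\|h_\delta - h_0\|_{L^1}\geq C\delta$.

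First I would record the fixed point equation $h_\delta = \rho_\delta * L_T h_\delta$. Since $L_T h_\delta \in BV$ with variation uniformly bounded (by the uniform Lasota--Yorke inequality of the previous section, or by a direct estimate on $L_T$ for this explicit map together with $\Var(\rho_\delta * f)\leq \Var(f)$ and the $L^1$-contraction, both proven above, giving $\|h_\delta\|_{BV}$ uniformly bounded), we may write $h_\delta = \rho_\delta * g_\delta$ with $\|g_\delta\|_{BV}$ uniformly bounded. The point is that $\rho_\delta * g_\delta$ has modulus of continuity controlled: for any $x,x'$ with $|x-x'|$ small,
\begin{equation*}
|(\rho_\delta * g_\delta)(x) - (\rho_\delta*g_\delta)(x')| \leq \|g_\delta\|_{BV}\,\big\|\rho_\delta(\cdot - x) - \rho_\delta(\cdot - x')\big\|_{L^1}/\!\!\sim
\end{equation*}
— more precisely one estimates the jump: $h_\delta$ can have a jump at a point $p$ only if $\rho_\delta$ (hence $\rho$) has an atom, but $\rho\in BV[-1,1]$ is a density, so $h_\delta$ is continuous, and its oscillation over any interval of length $\delta$ is at most $C\delta$. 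Concretely, $\mathrm{osc}_{[\tfrac12-\delta,\tfrac12+\delta]} h_\delta \leq C\delta \cdot \|g_\delta\|_{BV}$ by the convolution structure. (This is the technical heart and where I expect the main obstacle: making the ``$h_\delta$ is $O(\delta)$-smooth at scale $\delta$'' claim fully rigorous when $\rho$ is only $BV$, not continuous. The cleanest route is probably to bound the total variation of $h_\delta$ on a $\delta$-window directly: $\Var_{[\tfrac12-\delta,\tfrac12+\delta]}(\rho_\delta * g_\delta) \leq \|g_\delta\|_{BV}\cdot \sup_x \int_{\tfrac12-\delta-x}^{\tfrac12+\delta-x}\rho_\delta \leq C\delta$, using that $\rho_\delta$ integrates to $1$ and is supported on a $2\delta$ window, so its integral over any sub-window of length $2\delta$ can be estimated — though in the worst case this is just $\leq 1$, so one actually needs a more careful splitting. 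The fix: write $h_\delta - h_\delta(\text{shift})$ as $(\rho_\delta - \text{shift of }\rho_\delta)*g_\delta$ and use $\|\rho_\delta - \tau_t\rho_\delta\|_{L^1}\leq \frac{|t|}{\delta}\Var(\rho)$ for $|t|\leq\delta$, giving oscillation $\leq \Var(\rho)\,\|g_\delta\|_\infty$ — which shows $h_\delta$ has oscillation $O(1)$, not $O(\delta)$, over a $\delta$-window. That is still enough!)

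Indeed, for the lower bound it suffices that $h_\delta$ is \emph{continuous} and its oscillation is \emph{bounded} (uniformly in $\delta$), plus that outside a $C\delta$-neighborhood of the turning points $h_\delta$ is $C^1$-close to $h_0$. Then: on $[\tfrac12-\delta,\tfrac12]$, $h_\delta$ is near $h_0\equiv\tfrac23$ at the left endpoint, on $[\tfrac12,\tfrac12+\delta]$ it is near $h_0\equiv\tfrac43$ at the right endpoint, so by continuity $h_\delta$ takes some value in $[\tfrac23+\tfrac1{12},\tfrac43-\tfrac1{12}]$ somewhere in $[\tfrac12-\delta,\tfrac12+\delta]$; combining with the bounded-oscillation estimate one checks that $\int_{\tfrac12-\delta}^{\tfrac12+\delta}|h_\delta - h_0|\,dx \geq c\delta$ for an explicit constant $c$ (the step function is at ``distance at least $\tfrac1{3}$ from the mean value'' on at least half the window). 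So I would: (a) establish $h_\delta$ continuous and $\|h_\delta\|_{BV}\leq M$ uniformly via the results already proven; (b) show $h_\delta \to h_0$ in $L^1$, hence $h_\delta$ is within any $\varepsilon$ of $h_0$ on $[0,\tfrac12-\delta]$ and $[\tfrac12+\delta,1]$ in $L^1$-average for $\delta$ small; (c) run the interpolation-across-the-jump argument on the window $[\tfrac12-\delta,\tfrac12+\delta]$ to extract $\|h_\delta-h_0\|_{L^1}\geq C\delta$. The main obstacle is step (a)–(c) bookkeeping of constants to be sure the jump contribution is not cancelled by the (uncontrolled but $o(1)$) behavior away from the turning point; this is handled by choosing $\varepsilon$ small relative to the jump height $\tfrac23$ before sending $\delta\to 0$.
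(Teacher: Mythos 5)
Your overall strategy coincides with the paper's: both proofs rest on (i) the fixed-point identity $h_\delta=\rho_\delta\ast L_Th_\delta$ together with a uniform bound on $\|L_Th_\delta\|_\infty$ (via the uniform $BV$ bound) to show that $h_\delta$ is Lipschitz with constant $O(1/\delta)$ --- your estimate $\|\rho_\delta-\tau_t\rho_\delta\|_{L^1}\le \frac{|t|}{\delta}\Var(\rho)$ is exactly the paper's Proposition \ref{sop1}, which gives that $\rho_\delta\ast f$ is $\frac{\Var(\rho)\Vert f\Vert_\infty}{\delta}$-Lipschitz --- and (ii) the observation that a function with small Lipschitz constant cannot be $L^1$-close to the step function $h_0$. (Your first instinct, that the oscillation of $h_\delta$ over a $\delta$-window is $O(\delta)$, is indeed false; your correction to $O(1)$ oscillation, i.e.\ Lipschitz constant $O(1/\delta)$, is the right one.) Where you diverge is in step (ii): the paper proves a clean, self-contained variational lemma --- for any $a$-Lipschitz $f$ one has $\|f-h_0\|_{L^1}\ge\|f_a-h_0\|_{L^1}=\frac{1}{9a}$, where $f_a$ is the linear ramp of slope $a$ across the jump --- which immediately yields $\|h_\delta-h_0\|_{L^1}\ge\frac{\delta}{9C'}$ with no further input about $h_\delta$. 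Your softer route (continuity, intermediate values, localization near $x=\tfrac12$) can be made to work but is less economical and has one genuine soft spot: you invoke $h_\delta\to h_0$ in $L^1$ to place $h_\delta$ near $\tfrac23$ and $\tfrac43$ on either side of the jump, yet the map $(\ref{mapT})$ has periodic turning points, so the statistical-stability upper bound of the preceding section does not apply to it and this convergence is not available off the shelf. The fix is either a dichotomy (if $\|h_\delta-h_0\|_{L^1}\ge\varepsilon_0$ the conclusion is trivial for $\delta$ small; otherwise a Chebyshev argument produces points $x_1\in[0,\tfrac12]$, $x_2\in[\tfrac12,1]$ with $h_\delta(x_1)\le\tfrac56$ and $h_\delta(x_2)\ge\tfrac76$, and the intermediate value theorem plus the $C'/\delta$-Lipschitz bound give an interval of length comparable to $\delta$ on which $|h_\delta-h_0|\ge\tfrac16$), or simply to prove the sharp Lipschitz-approximation lemma, which requires no information about $h_\delta$ beyond its Lipschitz constant and makes the whole second half of the argument independent of any stability statement.
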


We prove the proposition by showing in section \ref{lipcon} that $h_{\delta
} $ is Lipschitz and providing an estimate for its Lipschitz constant,
showing that $h_{\delta }$ is $\frac{C^{\prime }}{\delta }$-Lipschitz for
some constant $C^{\prime }$. Then in section \ref{approx} we prove that
there is a $C^{\prime \prime }$ such that $\Vert f-h_{0}\Vert _{L^{1}}\geq 
\frac{C^{\prime \prime }}{a}$ for any function $f$ which is $a$-Lipschitz,
completing the proof.

\subsubsection{Estimating the Lipschitz constant of $h_{\protect\delta }$%
\label{lipcon}}

In this section we prove that under the assumptions proposition \ref%
{propapprox} for any $\delta >0$, the invariant density $h_{\delta }$ of the
perturbed system is $\frac{C^{\prime }}{\delta }$-Lipschitz. This will be
proved in Proposition \ref{proplip2}.

Before the main proposition we need two technical lemmas.

\begin{lemma}
For $f \in BV$, $h\geq 0$, we have 
\begin{equation*}
\int |f(x+h)-f(x)|dx \leq \Var(f)|h|.
\end{equation*}
\end{lemma}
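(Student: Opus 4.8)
The plan is to prove the inequality first for smooth functions, where it follows from the fundamental theorem of calculus, and then to extend it to all of $BV$ by the smooth approximation provided by Lemma \ref{lem:smoothApprox}.

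First I would take $f\in C^\infty(\mathbb{S}^1)$. For such $f$ one has, for every $x$,
\[
f(x+h)-f(x)=\int_0^h f'(x+t)\,dt,
\]
so by Tonelli's theorem and the translation invariance of Lebesgue measure on $\mathbb{S}^1$,
\[
\int_{\mathbb{S}^1}|f(x+h)-f(x)|\,dx\le\int_0^h\Big(\int_{\mathbb{S}^1}|f'(x+t)|\,dx\Big)dt=\int_0^h\|f'\|_{L^1}\,dt=h\,\|f'\|_{L^1}=h\,\Var(f),
\]
using that $\Var(f)=\|f'\|_{L^1}$ for $C^\infty$ functions. For general $f\in BV$ I would apply Lemma \ref{lem:smoothApprox} to obtain a sequence $(f_n)\subset C^\infty\cap BV$ with $\|f_n-f\|_{L^1}\to 0$ and $\Var(f_n)\to\Var(f)$. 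The previous step gives $\int|f_n(x+h)-f_n(x)|\,dx\le h\,\Var(f_n)$ for each $n$. Since translation by $h$ is an isometry of $L^1(\mathbb{S}^1)$, we have $\|f_n(\cdot+h)-f(\cdot+h)\|_{L^1}=\|f_n-f\|_{L^1}\to 0$, hence $f_n(\cdot+h)-f_n\to f(\cdot+h)-f$ in $L^1$ and the left-hand sides converge to $\int|f(x+h)-f(x)|\,dx$, while the right-hand sides converge to $h\,\Var(f)$. Passing to the limit yields the claim.

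There is essentially no serious obstacle here; the only point requiring a line of care is the passage to the limit, which amounts to the elementary bound $\big|\,\|f_n(\cdot+h)-f_n\|_{L^1}-\|f(\cdot+h)-f\|_{L^1}\,\big|\le 2\|f_n-f\|_{L^1}$. Alternatively, one could avoid the approximation step entirely and argue by duality, exactly as in the proof that $\Var(\rho_\delta * f)\le\Var(f)$: for $\psi\in C^1$ with $\|\psi\|_\infty\le 1$ one writes $\int(f(x+h)-f(x))\psi(x)\,dx=\int f(x)(\psi(x-h)-\psi(x))\,dx=-\int_0^h\big(\int f(x)\,\psi'(x-t)\,dx\big)dt$, bounds the inner integral by $\Var(f)$ using the test-function characterization of the variation with test function $x\mapsto\psi(x-t)$, and takes the supremum over $\psi$ to recover $\|f(\cdot+h)-f\|_{L^1}\le h\,\Var(f)$.
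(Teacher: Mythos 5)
Your proof is correct and follows essentially the same route as the paper: establish the bound for smooth functions via the fundamental theorem of calculus and Fubini/Tonelli, then extend to all of $BV$ using the smooth approximation of Lemma \ref{lem:smoothApprox} (you pass to the limit along a sequence where the paper runs an equivalent $\epsilon$-argument). No issues.
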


\begin{proof}
We first prove it for $f \in C^\infty \cap BV$: 
\begin{align*}
\int |f(x+h)-f(x)|dx &= \int_\mathbb{R} \left| \int f^{\prime }(y) \chi_{x
\leq y \leq x+h} dy \right| dx \\
&\leq \int_\mathbb{R} |f^{\prime }(y)| dy \times h \\
&= \Var(f) \times h
\end{align*}

Then for all $f \in BV$: let us set $\epsilon >0$. Using Lemma \ref%
{lem:smoothApprox}, we can have $g \in C^\infty \cap BV$ such that $%
\|g-f\|_{L^1} \leq \epsilon$ and $\Var(g) \leq \Var(f) + \epsilon$. We then
have 
\begin{align*}
\int |f(x+h) - f(x)|dx &\leq \int |f(x+h) - g(x+h)| + |g(x) - f(x)| +
|g(x+h) - g(x)| dx \\
&\leq 2\epsilon + \Var(g)h \\
&\leq (2+h)\epsilon + \Var(f).
\end{align*}
We have the inequality for all $\epsilon >0$, so we have our result.
\end{proof}

\begin{lemma}
\begin{equation*}
\Var(\rho_\delta) = \frac{\Var(\rho)}{\delta}.
\end{equation*}
\end{lemma}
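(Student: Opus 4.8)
The statement to prove is $\Var(\rho_\delta) = \Var(\rho)/\delta$, where $\rho_\delta(x) = \frac{1}{\delta}\rho(x/\delta)$. The plan is to use the same variational characterization of the total variation that was invoked in the lemma just above: $\Var(g) = \sup\{\int g(x)\phi'(x)\,dx : \phi \in C^1_c, \|\phi\|_\infty \leq 1\}$. The whole computation is then just a change of variables combined with the observation that this test-function class is stable under rescaling of the argument.

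First I would write out $\int \rho_\delta(x)\phi'(x)\,dx = \int \frac{1}{\delta}\rho(x/\delta)\phi'(x)\,dx$ and substitute $z = x/\delta$, so $x = \delta z$, $dx = \delta\,dz$, giving $\int \rho(z)\phi'(\delta z)\,dz$. Next I would set $\psi(z) := \phi(\delta z)$; then $\psi'(z) = \delta\,\phi'(\delta z)$, so that $\phi'(\delta z) = \frac{1}{\delta}\psi'(z)$, and the integral becomes $\frac{1}{\delta}\int \rho(z)\psi'(z)\,dz$. The point is that $\psi \in C^1_c$ with $\|\psi\|_\infty = \|\phi\|_\infty \leq 1$, and conversely every admissible $\psi$ arises this way from some admissible $\phi$ (take $\phi(x) = \psi(x/\delta)$). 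Hence taking the supremum over all admissible $\phi$ is the same as taking the supremum over all admissible $\psi$, and we get $\Var(\rho_\delta) = \sup_\psi \frac{1}{\delta}\int \rho(z)\psi'(z)\,dz = \frac{1}{\delta}\Var(\rho)$.

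There is essentially no obstacle here; the only point requiring a line of care is the bijection between the test-function classes under the rescaling $\phi \leftrightarrow \psi = \phi(\delta\,\cdot)$, which preserves both the $C^1_c$ property and the sup-norm constraint, so that the two suprema genuinely coincide rather than one merely dominating the other. (One should also implicitly note that on $\mathbb{S}^1$ everything is consistent because for small $\delta$ the support of $\rho_\delta$ stays inside a fundamental domain, so the periodic and line computations agree; but this is the same convention used throughout the section.) Alternatively, if one prefers to avoid the variational formula, the identity is immediate from $\rho_\delta' (x) = \frac{1}{\delta^2}\rho'(x/\delta)$ in the distributional sense together with $\Var(g) = \|g'\|$ (total variation of the derivative measure) and the scaling $\int |\rho_\delta'(x)|\,dx = \int \frac{1}{\delta^2}|\rho'(x/\delta)|\,dx = \frac{1}{\delta}\int|\rho'(z)|\,dz$, which is the quickest route when $\rho$ is smooth and then extends to general $\rho \in BV$ by the approximation Lemma~\ref{lem:smoothApprox}.
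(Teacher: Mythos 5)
Your argument is correct. In fact the paper states this lemma without any proof, treating it as an evident scaling property, so there is no "paper proof" to compare against; your computation via the variational characterization $\Var(g)=\sup\{\int g\phi'\,dx:\phi\in C^1_c,\ \|\phi\|_\infty\le 1\}$ (the same characterization the paper uses in the preceding lemma), with the substitution $z=x/\delta$ and the bijection $\phi\leftrightarrow\psi=\phi(\delta\,\cdot)$ on the admissible test class, is a clean and complete justification, and your remark that for small $\delta$ the support of $\rho_\delta$ sits inside a fundamental domain of $\mathbb{S}^1$ correctly disposes of the only point where the circle and the line could differ. The alternative route through $\|\rho_\delta'\|_{L^1}$ for smooth $\rho$ plus the approximation Lemma \ref{lem:smoothApprox} is equally valid.
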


These two lemmas easily give us a Lipschitz constant for the convolution
product.

\begin{proposition}
\label{sop1}For all $f\in L^{\infty }$, the function $\rho _{\delta }\ast f$
is $\frac{\Var(\rho )\Vert f\Vert _{\infty }}{\delta }$-Lipschitz.
\end{proposition}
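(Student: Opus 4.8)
The plan is to estimate the Lipschitz constant of $\rho_\delta\ast f$ directly from the definition of the convolution, using the two preceding lemmas. Fix $f\in L^\infty$ and $x,h$ with $h\geq 0$ (the case $h<0$ being symmetric). Writing out the convolution and shifting the variable of integration, I would express the difference
\begin{equation*}
(\rho_\delta\ast f)(x+h)-(\rho_\delta\ast f)(x)=\int \big(\rho_\delta(x+h-y)-\rho_\delta(x-y)\big)f(y)\,dy,
\end{equation*}
so that, by pulling out $\|f\|_\infty$ and applying the first technical lemma (the one bounding $\int|g(u+h)-g(u)|\,du$ by $\Var(g)\,|h|$) to $g=\rho_\delta$, we get
\begin{equation*}
\big|(\rho_\delta\ast f)(x+h)-(\rho_\delta\ast f)(x)\big|\leq \|f\|_\infty\int|\rho_\delta(u+h)-\rho_\delta(u)|\,du\leq \|f\|_\infty\,\Var(\rho_\delta)\,h.
\end{equation*}

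Then I would invoke the second technical lemma, $\Var(\rho_\delta)=\Var(\rho)/\delta$, to conclude that the increment is bounded by $\frac{\Var(\rho)\,\|f\|_\infty}{\delta}\,h$, which is exactly the asserted Lipschitz estimate. One small point to check along the way is that $\rho_\delta\ast f$ is genuinely a function (not just an $L^1$ class) and continuous, so that "Lipschitz" makes literal sense: since $\rho_\delta\in BV\subset L^\infty$ locally and $f\in L^\infty$, the convolution is bounded and the increment bound above already shows uniform continuity, so there is a canonical continuous representative and the bound holds pointwise for it.

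The argument is essentially routine; the only mild obstacle is bookkeeping on the circle $\mathbb{S}^1$ versus the line — the convolution is periodic and $\rho_\delta$ has support in $[-\delta,\delta]$, so I would either work on $\mathbb{R}$ with periodic functions or note that for $\delta$ small the support issue is harmless, and that the first lemma was stated in a form (for $f\in BV$ on $\mathbb{S}^1$) that applies directly to $\rho_\delta$. No estimate here needs $f$ to be anything better than $L^\infty$, which is why the proposition is stated at that level of generality, and this is precisely the regularity that will be fed into the next step (applying it with $f=L_Th_\delta$, which is bounded because $h_\delta\in BV$).
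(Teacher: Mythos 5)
Your argument is correct and is essentially identical to the paper's proof: both write the increment of $\rho_\delta\ast f$ as an integral against the translate difference of $\rho_\delta$, bound it by $\Vert f\Vert_\infty \Var(\rho_\delta)\,h$ via the first technical lemma, and conclude with $\Var(\rho_\delta)=\Var(\rho)/\delta$. The extra remarks about continuous representatives and the circle versus the line are fine but not needed beyond what the paper does.
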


\begin{proof}
We use the first lemma to write that, for all $x\in \mathbb{S}^{1},h\geq 0$,%
\begin{align*}
|\rho _{\delta }\ast f(x+h)-\rho _{\delta }\ast f(x)|& \leq \int
|f(y)|\times |\rho _{\delta }(x-y+h)-\rho _{\delta }(x-y)|dy \\
& \leq \Vert f\Vert _{\infty }\Var(\rho _{\delta })h.
\end{align*}%
The second lemma then allows us to conclude.
\end{proof}

We now want to use this result to bound the Lipschitz constant of $h_{\delta
}$, the invariant density of the perturbed system.

\begin{proposition}
\label{proplip2}There is a $C^{\prime }>0$ such that for all $\delta >0$,
the invariant density of the perturbed system $h_{\delta }$ is $\frac{%
C^{\prime }}{\delta }$-Lipschitz.
\end{proposition}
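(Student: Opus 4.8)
The plan is to read off the bound from the fixed–point identity $h_\delta=L_\delta h_\delta=\rho_\delta\ast(L_T h_\delta)$ together with Proposition \ref{sop1}, which already says that $\rho_\delta\ast g$ is $\tfrac{\Var(\rho)\|g\|_\infty}{\delta}$–Lipschitz for $g\in L^\infty$. Applying this with $g=L_T h_\delta$ reduces the whole statement to a single, $\delta$–independent estimate: a uniform bound $\|L_T h_\delta\|_\infty\le M$ for $\delta\in(0,\overline\delta)$. Since $T$ in $(\ref{mapT})$ is piecewise affine with slopes $1$ and $2$, one has the explicit formula $L_T h_\delta(x)=h_\delta\!\left(x-\tfrac12\right)\mathbf 1_{[1/2,1]}(x)+\tfrac12\,h_\delta\!\left(1-\tfrac x2\right)$, so $L_T$ is bounded on $L^\infty$ (with $\|L_T\|_{L^\infty\to L^\infty}\le\tfrac32$); thus it is enough to prove $\|h_\delta\|_\infty\le M'$ uniformly in $\delta$ (that $h_\delta\in L^\infty$ with an a priori $\delta$–dependent bound is immediate from $h_\delta=\rho_\delta\ast L_T h_\delta$ and $\|L_T h_\delta\|_{L^1}=1$).

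For the uniform $L^\infty$ bound I would argue directly from the explicit transfer operator rather than through a uniform Lasota--Yorke inequality, which is delicate here because $(\ref{mapT})$ has the periodic turning orbit $\{0,\tfrac12\}$, outside the scope of Proposition \ref{prop:BlankKeller}. The structural facts I would use are: (i) on the interior of $[0,\tfrac12]$ the indicator $\mathbf 1_{[1/2,1]}$ in the formula above vanishes, so there $h_\delta(x)=\tfrac12\int\rho_\delta(x-s)\,h_\delta\!\left(1-\tfrac s2\right)ds$; since $1-\tfrac s2\in[\tfrac34,1]$ for $s\in[0,\tfrac12]$ and convolution against a probability kernel does not increase the supremum, this gives $\sup_{[\delta,\,1/2-\delta]}h_\delta\le\tfrac12\sup_{[3/4,1]}h_\delta$; (ii) likewise on the interior of $[\tfrac12,1]$ one gets $\sup_{[1/2+\delta,\,1-\delta]}h_\delta\le\sup_{[0,1/2]}h_\delta+\tfrac12\sup_{[1/2,3/4]}h_\delta$. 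Adding to (i)--(ii) the analogous (weighted) inequalities for $h_\delta$ on the $O(\delta)$–neighbourhoods $[0,\delta]$, $[\tfrac12-\delta,\tfrac12]$, $[\tfrac12,\tfrac12+\delta]$, $[1-\delta,1]$ of the two turning points, one obtains a finite linear system of inequalities relating the suprema of $h_\delta$ over this fixed family of arcs. The crucial point is that every "loop" in this system passes through the expanding (slope–$2$) branch at least once and hence carries a factor $\le\tfrac12<1$, which lets one solve it for a bound $M'$ independent of $\delta$. Feeding $\|h_\delta\|_\infty\le M'$ back into Proposition \ref{sop1} then yields that $h_\delta$ is $\tfrac{C'}{\delta}$–Lipschitz with $C'=\tfrac32\,M'\Var(\rho)$, which is Proposition \ref{proplip2}.

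The step I expect to be the real obstacle is the treatment of $h_\delta$ on the $O(\delta)$–neighbourhoods of the periodic turning points. There convolution with $\rho_\delta$ genuinely mixes the values of $h_\delta$ coming from the two sides of a turning point, so one must keep track of the fraction of the mass of $\rho_\delta$ lying on each side — namely $p=\int_0^1\rho(u)\,du$ and $1-p$ — rather than crudely replacing the convolution by a supremum: a crude bound there reintroduces $\sup_{[0,1/2]}h_\delta$ with coefficient $1$ and destroys the contraction that makes the system solvable. Making this bookkeeping close is exactly the place where the hypothesis "no periodic turning points" is normally used, so it has to be done by hand here; it is possible precisely because the map and $h_0$ are completely explicit and the turning set consists of the single period–two orbit $\{0,\tfrac12\}$. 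Everything else — the interior estimates, the boundedness of $L_T$ on $L^\infty$, and the final appeal to Proposition \ref{sop1} — is routine.
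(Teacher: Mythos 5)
Your reduction is exactly the paper's: from the fixed-point identity $h_\delta=\rho_\delta\ast L_Th_\delta$ and Proposition \ref{sop1}, the statement follows once $\Vert L_Th_\delta\Vert_\infty$ is bounded uniformly in $\delta$. The paper obtains this from $\Vert L_Th_\delta\Vert_\infty\le A\Vert L_Th_\delta\Vert_{BV}\le AB\Vert h_\delta\Vert_{BV}\le ABM$, i.e.\ from the uniform $BV$ bound of assumption UF1, and you are right that this is a delicate point for the map $(\ref{mapT})$, whose turning points are periodic and hence outside the hypotheses of Proposition \ref{prop:BlankKeller}; the paper addresses this only through the remark invoking \cite{BK1}. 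So your diagnosis of where the difficulty sits is accurate.

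Your substitute for that step, however, has a genuine gap. First, every inequality in your system is homogeneous in $h_\delta$ (invariant under $h_\delta\mapsto\lambda h_\delta$), and the normalization $\int h_\delta=1$ never enters; a homogeneous system of sup-inequalities can at best compare the suprema over different arcs, never produce an absolute bound $M'$. Second, the contraction you invoke is not there. Writing $u=\sup_{[0,1/2]}h_\delta$ and $v=\sup_{[1/2,1]}h_\delta$ and suppressing the $O(\delta)$ strips, your (i) and (ii) read $u\le\tfrac12 v$ and $v\le u+\tfrac12 v$; the second gives $v\le 2u$, and substituting into the first yields $u\le u$, a tautology. Equivalently, the weight matrix $\bigl(\begin{smallmatrix}0&1/2\\ 1&1/2\end{smallmatrix}\bigr)$ of your system has spectral radius exactly $1$, with positive eigenvector $(1,2)$ --- precisely the profile of $h_0=(\tfrac23,\tfrac43)$ --- so no refinement of the family of arcs will make it strictly contracting: the slope-$1$ branch contributes the coefficient $1$ in front of $\sup_{[0,1/2]}h_\delta$, and together with the weight-$\tfrac12$ self-loop on $[1/2,1]$ the loops combine to eigenvalue $1$. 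To close the argument one must inject an inhomogeneous term controlled by $\Vert h_\delta\Vert_{L^1}=1$, which is exactly what a uniform Lasota--Yorke inequality $\Var(L_\delta^Nf)\le\alpha\Var(f)+C\Vert f\Vert_{L^1}$ (hence UF1, via $\Vert\cdot\Vert_\infty\le A\Vert\cdot\Vert_{BV}$) supplies; some input of that kind, or the extension of \cite{BK1} to periodic turning points, appears unavoidable here.
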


\begin{proof}
By definition, $h_{\delta }=L_{\delta }h_{\delta }=\rho _{\delta }\ast
L_{T}h_{\delta }$. Proposition \ref{sop1} gives us that $h_{\delta }$ is $%
\frac{\Var(\rho )}{\delta }\Vert L_{T}h_{\delta }\Vert _{\infty }$%
-Lipschitz. Another well known result is the existence of a constant $A>0$
such that for all $f\in BV(\mathbb{S}^1), \|f\|_\infty \leq A \|f\|_{BV}$.
Hence: 
\begin{align*}
\Vert L_{T}h_{\delta }\Vert _{\infty }& \leq A\Vert L_{T}h_{\delta
}\Vert_{BV} &  \\
& \leq AB\Vert h_{\delta }\Vert _{BV} & & \text{because }L_{T}\text{ is
bounded on }BV \\
& \leq ABM & & \text{by property UF1 proven earlier}.
\end{align*}%
We then have our result, as all the constants are independent from $\delta $.
\end{proof}

\subsubsection{Approximation of a discontinuity \label{approx}}

We prove here the lower bound on the approximation of $h_0$ by $a$-Lipschitz
functions, with $a>0$ fixed.

Recall that $h_0$ is defined as (Figure \ref{fig:zeroslope}) 
\begin{equation*}
h_{0}:x\mapsto \left\{ 
\begin{array}{ll}
\frac{2}{3} & ~0\leq x\leq 0.5 \\ 
\frac{4}{3} & ~0.5< x\leq 1%
\end{array}%
\right. .
\end{equation*}

The intuitive "best approximation" function that is $a$-Lip would then be
the linear path, 
\begin{equation*}
f_{a}:x\mapsto \left\{ 
\begin{array}{cl}
\frac{2}{3} & \text{if }~x\leq 0.5-\frac{1}{3a} \\ 
1 + ax - \frac{a}{2} & \text{if }~0.5-\frac{1}{3a}\leq x\leq 0.5+\frac{1}{3a}
\\ 
\frac{4}{3} & \text{if }~x\geq 0.5+\frac{1}{3a}%
\end{array}%
\right. .
\end{equation*}%
We now prove that this is the best approximation in $L^{1}$, in the sense of
the following proposition.

\begin{figure}[h]
\caption{Lipschitz approximation of a discontinuity, graphical
representation of $h_0$ and $f_a$ ($a=3$).}
\label{fig:zeroslope}%
\begin{tikzpicture}
	\begin{axis}[legend pos=north west]
	\addplot[domain = 0:0.5, samples = 100]{2/3};
	\addplot[domain = 0.5:1, samples = 100]{4/3};
	\addplot[domain = 0:1, samples = 200, color = red]{(3*x-7/6)*(0.5-1/9 < x)*(x < 0.5+1/9) + 2/3 + (2/3)*(x > 0.5+1/9)};
	\legend{$h_0$,,$f_a$};
	\end{axis}
\end{tikzpicture}
\end{figure}
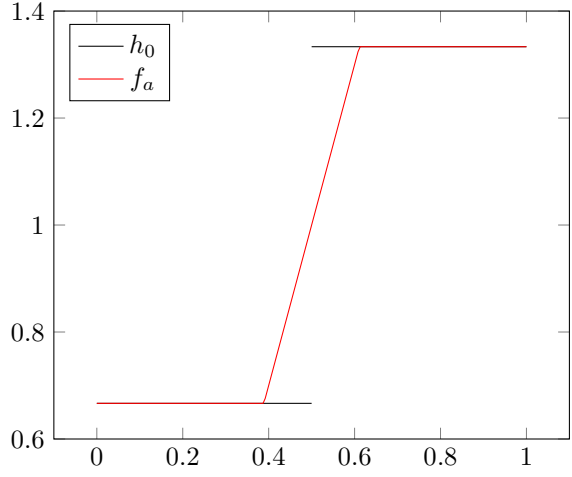

\begin{proposition}
Let $f$ be a real-valued $a$-Lipschitz function of $[0,1]$. The following
inequality holds: 
\begin{equation*}
\|f-h_0\|_{L^1} \geq \|f_a-h_0\|_{L^1} = \frac{1}{9a}.
\end{equation*}
\end{proposition}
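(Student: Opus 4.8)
The plan is to reduce the claimed $L^1$ inequality to a pointwise-in-the-partition statement and then to a one-dimensional calculus fact about how cheaply, in $L^1$, an $a$-Lipschitz function can bridge a jump of height $2/3$. First I would note that $h_0$ is constant equal to $2/3$ on $[0,\tfrac12]$ and constant equal to $4/3$ on $(\tfrac12,1]$, so that $f_a$ was built precisely so that $f-h_0$ vanishes outside the window $I_a:=[\tfrac12-\tfrac1{3a},\tfrac12+\tfrac1{3a}]$ for the candidate $f=f_a$. The key observation is that for \emph{any} $a$-Lipschitz $f$, on the left interval $[0,\tfrac12]$ we have $|f(x)-h_0(x)|=|f(x)-\tfrac23|$, and an $a$-Lipschitz function starting from a value close to $\tfrac23$ cannot immediately be close to $\tfrac43$ at $\tfrac12$: if $f(\tfrac12-s)$ is within $\varepsilon$ of $\tfrac23$ then, by the Lipschitz bound, $f$ stays within $as+\varepsilon$ of $\tfrac23$ on $[\tfrac12-s,\tfrac12]$, and symmetrically on the right.

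Concretely, I would set $u(x):=f(x)-h_0(x)$ and split $\|u\|_{L^1}=\int_0^{1/2}|u|+\int_{1/2}^{1}|u|$. Writing $p:=f(\tfrac12^-)-\tfrac23\in[0,\tfrac23]$ for the ``deficit from below'' — if $f(\tfrac12)\le\tfrac23$ the left integral is already handled by a cruder bound, so assume $p\in(0,\tfrac23]$ — the Lipschitz condition gives $f(\tfrac12-t)\ge \tfrac23+p-at$ for $t\ge 0$, hence $|u(\tfrac12-t)|=|f(\tfrac12-t)-\tfrac23|\ge (p-at)^+$; integrating in $t$ over $[0,\tfrac12]$ contributes at least $\int_0^{p/a}(p-at)\,dt=\tfrac{p^2}{2a}$. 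On the right interval, $|u(x)|=|f(x)-\tfrac43|$ and the jump that must still be crossed has height $\tfrac23-p$ (since $f(\tfrac12)=\tfrac23+p$), so the same argument, now with $f(\tfrac12+t)\le\tfrac23+p+at$ forcing $|u(\tfrac12+t)|\ge(\tfrac23-p-at)^+$ as long as $\tfrac23-p-at\ge 0$, gives a right contribution at least $\tfrac{(2/3-p)^2}{2a}$. Therefore $\|f-h_0\|_{L^1}\ge \tfrac1{2a}\bigl(p^2+(\tfrac23-p)^2\bigr)$ for all admissible $p\in[0,\tfrac23]$.

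The final step is elementary optimization: $\varphi(p):=p^2+(\tfrac23-p)^2$ is minimized over $[0,\tfrac23]$ at $p=\tfrac13$, with $\varphi(\tfrac13)=2\cdot\tfrac19=\tfrac29$, whence $\|f-h_0\|_{L^1}\ge \tfrac1{2a}\cdot\tfrac29=\tfrac1{9a}$. One then checks that $f_a$ achieves this: $f_a(\tfrac12)=1$, i.e. $p=\tfrac13$, $f_a$ is exactly affine with slope $a$ on $I_a$, and the two triangular regions each have area $\tfrac12\cdot\tfrac1{3a}\cdot\tfrac13=\tfrac1{18a}$, summing to $\tfrac1{9a}$, so the bound is sharp and equals $\|f_a-h_0\|_{L^1}$.

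The main obstacle — really the only subtlety — is bookkeeping the case analysis cleanly: one must allow $f(\tfrac12)$ to lie anywhere (below $\tfrac23$, inside $[\tfrac23,\tfrac43]$, or above $\tfrac43$), and verify that in the degenerate cases the bound $\tfrac1{9a}$ still holds, typically because then one of the two one-sided integrals already exceeds $\tfrac1{9a}$ on its own (the relevant $(p-at)^+$ integral is monotone in the deficit, and a deficit $\ge\tfrac23$ on one side already yields $\ge\tfrac{(2/3)^2}{2a}=\tfrac2{9a}>\tfrac1{9a}$). Once that is absorbed into the ``$p\in[0,\tfrac23]$ without loss of generality'' remark, the rest is the two triangle-area estimates and the quadratic minimization above.
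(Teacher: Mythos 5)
Your argument is correct in outline and arrives at the right constant, but it is a genuinely different route from the paper's. The paper first truncates $f$ to take values in $[\tfrac23,\tfrac43]$, restricts to the window $[\tfrac12-\tfrac1{3a},\tfrac12+\tfrac1{3a}]$, and rescales everything to a normalized problem on $[0,1]^2$ with $1$-Lipschitz functions; there it writes $\|f-h_0\|_{L^1}=\int_0^{1/2}f+\int_{1/2}^1(1-f)$ and uses the Lipschitz bound relative to the single value $f(\tfrac12)$ on each half, in such a way that the two correction terms $\pm\tfrac12(\tfrac12-f(\tfrac12))$ cancel exactly and no optimization is needed. You instead keep $f$ arbitrary real-valued, lower-bound each one-sided integral by a triangle area quadratic in the deficit $p=f(\tfrac12)-\tfrac23$, and minimize $p^2+(\tfrac23-p)^2$. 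Your version buys a proof with no truncation or rescaling step and makes the extremality of $p=\tfrac13$ (i.e.\ of $f_a$) completely transparent; the paper's version buys the absence of a case analysis in $p$, at the price of the preliminary reductions.

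One step needs patching. The bound ``integrating over $[0,\tfrac12]$ contributes at least $\int_0^{p/a}(p-at)\,dt=\tfrac{p^2}{2a}$'' is only valid when the triangle fits, i.e.\ $p/a\le\tfrac12$; since $p$ may be as large as $\tfrac23$ and the statement implicitly allows any $a\ge\tfrac23$, one can have $p/a>\tfrac12$, in which case $\int_0^{1/2}(p-at)^+\,dt=\tfrac p2-\tfrac a8<\tfrac{p^2}{2a}$ and your displayed intermediate inequality is false (the same remark applies to the right-hand triangle and to the cruder bound in your degenerate cases). The conclusion survives: the correct quantity $F(p)=\int_0^{1/2}(p-at)^+dt+\int_0^{1/2}(\tfrac23-p-at)^+dt$ is convex in $p$ and symmetric about $p=\tfrac13$, hence minimized at $p=\tfrac13$, where both triangles have base $\tfrac1{3a}\le\tfrac12$ and $F(\tfrac13)=\tfrac1{9a}$; alternatively, in the intended application $a=C'/\delta$ is large, so $p/a\le\tfrac2{3a}\le\tfrac12$ automatically. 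Either remark closes the gap, but as written the argument is incomplete for $a$ near $\tfrac23$.
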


\begin{proof}
The first step is to only consider the difference in the neighborhood of the
discontinuity where $f_a \neq h_0$: for all real-valued $f$, 
\begin{equation*}
\|f-h_0\|_{L^1[0,1]} \geq \|f-h_0\|_{L^1([0.5-\frac{1}{3a}, ~0.5+\frac{1}{3a}%
])}.
\end{equation*}

We can then simplify our problem by only considering functions with values
on the interval $[\frac{2}{3}, \frac{4}{3}]$. Indeed, for every real-valued $%
a$-Lip function $f$, if we denote by $\tilde{f}$ the function defined by $%
\tilde{f}:x \mapsto \min( \max(f(x),\frac{2}{3}), ~\frac{4}{3})$, the latter
is a better approximation of the discontinuity (in the sense $%
\|f-h_0\|_{L^1} \geq \|\tilde{f}-h_0\|_{L^1}$) while also being $a$%
-Lipschitz.

By a linear change of coordinates, one can see that proving the result on
the window $[0.5 - \frac{1}{3a},0.5+\frac{1}{3a}] \times [\frac{2}{3},\frac{4%
}{3}]$ for $a$-Lip functions is equivalent to proving it on $[0,1]\times[0,1]
$ for $1$-Lip functions, with $h_0$ and $f_a$ now being (Figure \ref%
{fig:zeroslope2}) 
\begin{equation*}
h_{0}:x\mapsto \left\{ 
\begin{array}{ll}
0 & ~0\leq x\leq 0.5 \\ 
1 & ~0.5< x\leq 1%
\end{array}%
\right. \qquad \text{and} \qquad f_a:x\mapsto x.
\end{equation*}

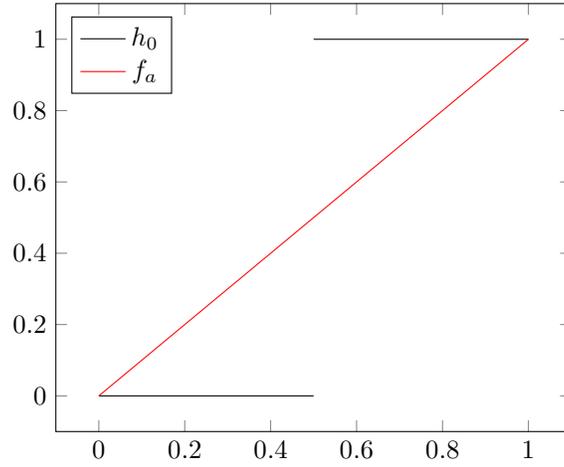
\begin{figure}[h]
\caption{Lipschitz approximation of a discontinuity, rescaling of the
problem.}
\label{fig:zeroslope2}%
\begin{tikzpicture}
	\begin{axis}[legend pos=north west]
	\addplot[domain = 0:0.5, samples = 100]{0};
	\addplot[domain = 0.5:1, samples = 100]{1};
	\addplot[domain = 0:1, samples = 100, color = red]{(1*x+0)*(0.5-1/2 < x)*(x < 0.5+1/2) + (x > 0.5+1/2)};
	\legend{$h_0$,,$f_a$};
	\end{axis}
\end{tikzpicture}
\end{figure}

Let $f$ be a $1$-Lip function of $[0,1]$, with values in $[0,1]$. 
\begin{equation*}
\|f-h_0\|_{L^1} = \int_{0}^{0.5} f(x) ~dx + \int_{0.5}^{1} 1-f(x) ~dx.
\end{equation*}
Using the $1$-Lip property, we have that for all $x > 0.5$, 
\begin{equation*}
f(x) - f(0.5) \leq \left|f(x) - f(0.5)\right| \leq x-0.5 \qquad \text{i.e.}
\qquad -f(x) \geq 0.5-x - f(0.5).
\end{equation*}
Hence 
\begin{align*}
\|f-h_0\|_{L^1} &\geq \int_{0}^{0.5} f(x) ~dx + \int_{0.5}^{1} 1-x ~dx + 
\frac{1}{2}(0.5 - f(0.5)) \\
&= \int_{0}^{0.5} f(x) + 0.5 - f(0.5) ~dx + \int_{0.5}^{1} 1-x ~dx.
\end{align*}
We can re-use the $1$-Lip property on $x < 0.5$ to obtain 
\begin{equation*}
f(x) - f(0.5) + 0.5 \geq x
\end{equation*}
and conclude 
\begin{equation*}
\|f-h_0\|_{L^1} \geq \int_{0}^{0.5} x ~dx + \int_{0.5}^{1} 1-x ~dx =
\|f_a-h_0\|_{L^1}.
\end{equation*}
\end{proof}

We are now ready to prove Proposition \ref{propapprox}.

\begin{proof}[Proof of Proposition \protect\ref{propapprox}]
We showed that there is a $C^{\prime }>0$ such that the invariant density of 
$L_{\delta }$ is $\frac{C^{\prime }}{\delta }$-Lipschitz. We can apply the
last proposition to state the following lower bound on the modulus of
continuity: 
\begin{equation*}
\Vert h_{\delta }-h_{0}\Vert _{L^{1}}\geq \frac{\delta }{9C^{\prime }}%
=C\delta .
\end{equation*}%
Note that this lower bound result could easily be applied to all piecewise
expanding maps with a discontinuity in their unperturbed invariant density,
with a different constant for each map.
\end{proof}

\newpage

\noindent \textbf{Acknowledgments.} S.G. is partially supported by the
research project PRIN 2017S35EHN\_004 "Regular and stochastic behavior in
dynamical systems" of the Italian Ministry of Education and Research. \ The
authors whish to thank Ecole Normale Paris Saclay and Universit\`{a} di Pisa
for the organization of the international master stage "Stage d'initiation 
\`{a} la recherche M1" in which framework the work has been done.
The authors also whish to thank W. Bahsoun and J. Sedro for useful discussions about zero noise limits and response.
\bigskip

\bigskip

\end{document}